\DeclareMathAlphabet{\pazocal}{OMS}{zplm}{m}{n}
\newcommand{\B}{\mathbb{B}}
\newcommand{\R}{\mathbb{R}}
\newcommand{\Apazo}{\pazocal{A}}
\newcommand{\Epazo}{\pazocal{E}}
\newcommand{\Fpazo}{\pazocal{F}}
\newcommand{\Kpazo}{\pazocal{K}}
\newcommand{\Mpazo}{\pazocal{M}}
\newcommand{\Bpazo}{\pazocal{B}}
\newcommand{\Cpazo}{\pazocal{C}}
\newcommand{\Qpazo}{\pazocal{Q}}
\newcommand{\Hpazo}{\pazocal{H}}
\newcommand{\Npazo}{\pazocal{N}}
\newcommand{\Lpazo}{\pazocal{L}}
\newcommand{\Dpazo}{\pazocal{D}}
\newcommand{\Rpazo}{\pazocal{R}}
\newcommand{\Spazo}{\pazocal{S}}
\newcommand{\Opazo}{\pazocal{O}}
\newcommand{\Wpazo}{\pazocal{W}}
\newcommand{\Acal}{\mathcal{A}}
\newcommand{\Dcal}{\mathcal{D}}
\newcommand{\Ecal}{\mathcal{E}}
\newcommand{\Lcal}{\mathcal{L}}
\newcommand{\Pcal}{\mathcal{P}}
\newcommand{\Tcal}{\mathcal{T}}
\newcommand{\Id}{\textnormal{Id}}
\newcommand{\D}{\textnormal{D}}
\newcommand{\Tan}{\textnormal{Tan}}
\newcommand{\supp}{\textnormal{supp}}
\newcommand{\dom}{\textnormal{dom}}
\newcommand{\Lip}{\textnormal{Lip}}
\newcommand{\AC}{\textnormal{AC}}
\newcommand{\Graph}{\textnormal{Graph}}
\newcommand{\Div}{\textnormal{div}}
\newcommand{\dist}{\textnormal{dist}}
\newcommand{\textbn}[1]{\textnormal{\textbf{#1}}}
\newcommand{\co}{\overline{\textnormal{co}} \hspace{0.05cm}}
\newcommand{\dsf}{\textnormal{\textsf{d}}}
\newcommand{\Bgamma}{\boldsymbol{\gamma}}
\newcommand{\Bnu}{\boldsymbol{\nu}}
\newcommand{\Bmu}{\boldsymbol{\mu}}
\newcommand{\BB}[1]{\textcolor{blue}{#1}}
\newcommand{\INTDom}[3]{\int_{#2} #1 \textnormal{d} #3}
\newcommand{\INTSeg}[4]{\int_{#3}^{#4} #1 \textnormal{d} #2}
\newcommand{\NormL}[3]{\parallel \hspace{-0.1cm} #1 \hspace{-0.1cm} \parallel _ {L^{#2}(#3)}}
\newcommand{\NormLp}[3]{\parallel \hspace{-0.1cm} #1 \hspace{-0.1cm} \parallel _ {\pazocal{L}^{#2}(#3)}}
\newcommand{\NormC}[3]{\left\| #1  \right\| _ {C^{#2}(#3)}}
\newcommand{\Norm}[1]{\parallel \hspace{-0.1cm} #1 \hspace{-0.1cm} \parallel}
\newcommand{\tto}{\rightrightarrows}
\newtheorem{Def}{Definition}[section]
\newtheorem{thm}[Def]{Theorem}
\newtheorem{prop}[Def]{Proposition}
\newtheorem{rmk}[Def]{Remark}
\newtheorem{lem}[Def]{Lemma}
\numberwithin{equation}{section}
\renewcommand{\epsilon}{\varepsilon}
\newenvironment{taggedhyp}[1]
    {\taggedhypx}
    {\endtaggedhypx}
\title{On the Viability and Invariance of Proper Sets under \\ Continuity Inclusions in Wasserstein Spaces}
\author{Benoît Bonnet-Weill\footnote{CNRS, LAAS, 7 avenue du colonel Roche, F-31400 Toulouse, France. \textit{E-mail}: \texttt{benoit.bonnet@laas.fr} (Corresponding author)} \,and Hélène Frankowska\footnote{CNRS,  IMJ-PRG,  UMR  7586,  Sorbonne  Université,  4  place  Jussieu,  75252  Paris,  France. \hfill \hspace{3.5cm} \textit{E-mail}: \texttt{helene.frankowska@imj-prg.fr}}}
\begin{document}

\maketitle

\begin{abstract}
In this article, we derive conditions for the existence of solutions to state-constrained continuity inclusions in Wasserstein spaces whose right-hand sides may be discontinuous in time. These latter are based on a fine investigation of the infinitesimal behaviour of the underlying reachable sets, through which we show that up to a negligible set of times, every admissible velocity of a continuity inclusion can be approximately realised as the metric derivative of a solution of the dynamics, and vice versa. Building on these results, we are able to establish necessary and sufficient geometric conditions for the viability and invariance of stationary and time-dependent constraint sets which involve a suitable notion of contingent cones in Wasserstein spaces, and presented in ascending order of generality. We then close the article by exhibiting two prototypical examples of constraints sets appearing in applications for which one can compute relevant subfamilies of contingent directions.
\end{abstract}

{\footnotesize
\textbf{Keywords:} Continuity Inclusions, Optimal Transport, Viability, Invariance, Wasserstein Geometry,

\hspace{1.75cm} Dynamics  with Time Discontinuities.

\vspace{0.25cm}

\textbf{MSC2020 Subject Classification:} 28B20, 34G25, 46N20, 49Q22
}

\tableofcontents


\section{Introduction}
\setcounter{equation}{0} \renewcommand{\theequation}{\thesection.\arabic{equation}}

Recent times have witnessed a surge of interest for the mathematical analysis of large-scale approximations of particle systems. During the past two decades, a series of seminal works concerned with the mean-field approximation of cooperative dynamics \cite{Carrillo2009,Carrillo2010,HaLiu}, the theory of mean-field games \cite{Cardaliaguet2010,Huang2006,Lasry2007} and the sparse control of multiagent systems \cite{Caponigro2013,Caponigro2015,FornasierPR2014} have given rise to several research currents focusing on dynamical and variational problems whose aim is to describe the macroscopic behaviour of many-body systems. Amongst these latter, \textit{mean-field control} is a research branch whose main object lies in designing scale-free and efficient control signals for large microscopic systems, by finely understanding the interplay that exists between discrete models and their continuous approximations. From a technical standpoint, these inquiries often boil down to studying variational problems in the space of probability measures, and are commonly approached using optimal transport techniques and Wasserstein geometry, in the spirit of the reference treatises \cite{AGS,OTAM,villani1}. Without aiming at full exhaustivity, we point the reader to the manuscripts \cite{LipReg,Cavagnari2022,Fornasier2019,Fornasier2014} for various existence and qualitative regularity results on deterministic mean-field optimal control problems, as well as to the following broad series of works dealing with optimality conditions, either in the form of a Pontryagin maximum principle \cite{Averboukh2023,Bongini2017,PMPWassConst,PMPNeurODEs,SetValuedPMP,PMPWass,Pogodaev2016,Pogodaev2020} or Hamilton-Jacobi-Bellman equations \cite{Badreddine2022,Cavagnari2018,Cavagnari2020,Jimenez2020}. We also mention the references \cite{AlbiPZ2014,SparseJQMF,Carrillo2022,ControlKCS} that propose astute control strategies to stir collective systems towards specific asymptotic patterns, and finally \cite{AlbiCFK2017,PMPNeurODEs,Burger2021,Burger2020,Pogodaev2023} for various numerical methods in the context of mean-field optimal control. 

\bigskip

Motivated by this blooming interest for variational problems in measure spaces, several research groups have been investigating relevant generalisations of the core concepts of \textit{set-valued analysis} to the setting of mean-field control \cite{Badreddine2022,ContInc,SetValuedPMP,SemiSensitivity,ContIncPp,CavagnariMP2018,CavagnariMQ2021,Jimenez2020}, a lively trend that reached more recently other closely related topics, such as mean-field games \cite{Arjmand2021,Cannarsa2021} and the study of sufficient conditions for the well-posedness of dynamics in measure spaces \cite{CavagnariSS2022,Karimghasemi2021}. It is de facto widely accepted that the language of correspondences, differential inclusions and generalised gradients provides, in many cases, a synthetic and powerful framework in which most problems stemming from the calculus of variations, games and control theory can be encompassed, as supported by the series of reference monographs \cite{Aubin1984,Aubin1990,Clarke,Vinter} tracing back to the nineteen eighties. For these reasons, the authors of the present manuscript introduced in \cite{ContInc,ContIncPp} a notion of differential inclusion in Wasserstein spaces that is tailored to the study of mean-field control problems. Therein, given a set-valued mapping $V : [0,T] \times \Pcal_p(\R^d) \tto C^0(\R^d,\R^d)$, we defined \textit{continuity inclusions} as the set-valued dynamical systems
\begin{equation*}
\partial_t \mu(t) \in -\Div_x \Big( V(t,\mu(t)) \mu(t) \Big),
\end{equation*}
whose solutions are understood as the collection of all curves $\mu(\cdot) \in \AC([0,T],\Pcal_p(\R^d))$ for which there exists a \textit{measurable selection} $t \in [0,T] \mapsto v(t) \in V(t,\mu(t))$ such that the \textit{continuity equation}
\begin{equation*}
\partial_t \mu(t) + \Div_x(v(t) \mu(t)) = 0 
\end{equation*}
holds in the sense of distributions. While other notions of solutions for differential inclusions in measures spaces had already been proposed in some preexisting works, see e.g. \cite{Averboukh2018,CavagnariMP2018,Jimenez2020}, the approach we just described seemed more natural as well as necessary for several reasons. On the one hand, it remained coherent with the classical theory of set-valued dynamics, see e.g. \cite{Aubin1984} and \cite[Chapter 11]{Aubin1990}, as well as with the geometric interpretation of Wasserstein spaces developed in \cite{AGS,Otto2001} wherein the space $(\Pcal_p(\R^d),W_p)$ is seen as a fiber bundle over which continuity equations essentially play the same role as ODEs for differential manifolds. On the other hand, it complied with one of the most important and desired features of differential inclusions, already formulated in the pioneering article \cite{Filippov1962}, which stipulates that solutions of control systems should be in one-to-one correspondence with those of their set-valued counterparts. In \cite{ContInc,ContIncPp}, based on this definition of differential inclusions, we proved analogues in the setting of Wasserstein spaces of the Filippov estimates and Peano existence theorem, as well as a relaxation principle and a compactness criterion for the underlying solution sets. These fundamental results are known to be extremely useful to investigate the fine properties of optimal control problems, both in the deterministic \cite{Frankowska2018,Vinter} and stochastic \cite{PMPSto} settings, while enjoying natural generalisations to study e.g. evolution equations in Banach spaces \cite{Frankowska1990,FrankowskaMM2018} or mutational dynamics in metric spaces \cite{Badreddine2022Bis,Frankowska2022}. We also point to our recent works \cite{SetValuedPMP,SemiSensitivity}, where we successfully applied such set-theoretic techniques to derive optimality conditions for optimal control problems in Wasserstein spaces, and to study certain qualitative properties of their solutions. 

While permitting to handle a variety of relevant dynamical models in measure spaces, the aforedescribed framework does not cover systems with \textit{state-constraints}, which appear nonetheless in a wide variety of applications ranging from game theory \cite{Cardaliaguet2016} to pedestrian dynamics \cite{Cristiani2017} and traffic flows \cite{Carlier2008}, and more recently to the dynamical formulations of deep neural networks \cite{E2017}. Historically, the problem of ensuring that a differential inclusion admits trajectories that remain within a given set starting from any initial condition was coined \textit{viability}, whereas that of ensuring that all such trajectories be viable was usually called \textit{invariance}. The first results in this direction were established in \cite{Nagumo1942} for differential equations, while their natural counterparts for differential inclusions with stationary right-hand sides later followed in \cite{Bebernes1970} and \cite{Haddad1981}. To this day, the farthest-reaching viability theorems for classical differential inclusions can be found in \cite{Frankowska1996,Frankowska1995} -- which inspired several of our contributions --, wherein the viability of general time-dependent constraint tubes is proven both in the Carathéodory and Cauchy-Lipschitz frameworks. Besides modelling incentives, viability and invariance results can be used to study the existence and uniqueness of viscosity solutions for several nonlinear partial differential equations, see e.g. \cite{Buckdahn2000,Frankowska1995}, as well as to investigate sufficient stability conditions for differential inclusions \cite{AubinSurvey1990}. These viewpoints -- like many others stemming from set-valued analysis -- present the advantage of being readily transposable beyond the setting of finite-dimensional vector spaces, as illustrated e.g. by their recent applications to problems in metric spaces \cite{Averboukh2018,Badreddine2022,Badreddine2022Bis,CavagnariMQ2021}. 

\bigskip

In this article, we study the viability and invariance properties of constraints sets under the action of continuity inclusions in the Wasserstein spaces $(\Pcal_p(\R^d),W_p)$ with $p \in (1,+\infty)$. Given a constraint tube $\Qpazo : [0,T] \tto \Pcal_p(\R^d)$ with proper images (see Definition \ref{def:Proper} below), we provide necessary and sufficient conditions ensuring that either all or some of the solutions of the Cauchy problem 
\begin{equation}
\label{eq:IntroCauchy}
\left\{
\begin{aligned}
& \partial_t \mu(t) \in - \Div_x \Big( V(t,\mu(t)) \mu(t) \Big), \\
& \mu(\tau) = \mu_{\tau},  
\end{aligned}
\right.
\end{equation}
are such that $\mu(t) \in \Qpazo(t)$ for all times $t \in [\tau,T]$, wherein $\tau \in [0,T]$ and $\mu_{\tau} \in \Qpazo(\tau)$ are both arbitrary. These results, which are discussed in Section \ref{section:CauchyLip}, rely on a careful analysis of the infinitesimal behaviour of the reachable sets of \eqref{eq:IntroCauchy}. This latter is the object of Section \ref{section:Quali}, and can be heuristically summarised as follows. In Theorem \ref{thm:Liminf}, we show that for each element $v_{\tau} \in V(\tau,\mu_{\tau})$ taken at some adequate pair $(\tau,\mu_{\tau}) \in [0,T] \times \Pcal_p(\R^d)$ and every $\epsilon > 0$, there exists a solution $\mu_{\epsilon}(\cdot)$ of \eqref{eq:IntroCauchy} which satisfies
\begin{equation*}
W_p \Big( \mu_{\epsilon}(\tau+h^{\epsilon}) , (\Id + h^{\epsilon} v_{\tau})_{\sharp} \mu_{\tau} \Big) \leq \epsilon h^{\epsilon}
\end{equation*}
whenever $h^{\epsilon}>0$ is sufficiently small. In other words, each admissible velocity can be approximately realised -- on a subset of times of full $\Lcal^1$-measure and up to an arbitrarily small error -- as the \textit{right metric derivative} of an admissible curve. In Theorem \ref{thm:Limsup}, we complete this result by showing that for $\Lcal^1$-almost every $\tau \in [0,T]$, each solution $\mu_{\epsilon}(\cdot)$ of \eqref{eq:IntroCauchy} and any sequence $h_i \to 0$, there exists $v_{\tau}^{\epsilon} \in V(\tau,\mu_{\tau})$ such that
\begin{equation*}
W_p \Big( \mu_{\epsilon}(\tau+h_{i_k}^{\epsilon}) , (\Id + h_{i_k}^{\epsilon} v_{\tau}^{\epsilon})_{\sharp} \mu_{\tau} \Big) \leq \epsilon |h_{i_k}^{\epsilon}| , 
\end{equation*}
along a subsequence $h_{i_k}^{\epsilon} \to 0$. Stated otherwise, one can always find an admissible velocity which approximately represents -- again on a subset of full $\Lcal^1$-measure, up to an arbitrary small error, and a subsequence -- the \textit{metric derivative} of a solution of the Cauchy problem. It is worth noting that while these results are proven under Cauchy-Lipschitz assumptions in the present paper, we do believe that they remain valid under weaker Carathéodory-Peano hypotheses similar to those of \cite[Section 3]{ContIncPp}. The main issue in proving so would be to replace the estimates on flow maps by their counterparts for measures concentrated on characteristic curves, whose existence is ensured by the famed superposition principle of Ambrosio, see e.g. \cite[Theorem 3.4]{AmbrosioC2014}. 

Let us now discuss more in depth the main contributions of this article, which are the necessary and sufficient conditions ensuring that a constraints tube $\Qpazo : [0,T] \tto \Pcal_p(\R^d)$ is either viable or invariant under the dynamics of $V : [0,T] \times \Pcal_p(\R^d) \tto C^0(\R^d,\R^d)$ for $p \in (1,+\infty)$. For the sake of clarity, we start our investigation thereof by considering stationary constraints sets. In this context, the relevant geometric objects allowing to study viability and invariance properties are the \textit{metric contingent cones} to $\Qpazo$, defined by
\begin{equation*}
T_{\Qpazo}(\nu) := \bigg\{ \xi \in \Lpazo^p(\R^d,\R^d,\nu) ~\, \textnormal{s.t.}~  \liminf_{h \to 0^+} \tfrac{1}{h} \dist_{\Pcal_p(\R^d)} \Big( (\Id + h \xi)_{\sharp} \nu \, ; \Qpazo \Big) = 0 \bigg\}
\end{equation*}
for each $\nu \in \Qpazo$, where $\Lpazo^p(\R^d,\R^d;\nu)$ stands for the seminormed space of Borel measurable and $p$-integrable maps against $\nu$, and whose expression is akin to that recently introduced in \cite{Badreddine2022}. Building on this notion, we show in Theorem \ref{thm:ViabStationary} that if the geometric compatibility condition
\begin{equation}
\label{eq:IntroViab1}
V(t,\nu) \cap \co T_{\Qpazo}(\nu) \neq \emptyset
\end{equation}
is satisfied for $\Lcal^1$-almost every $t \in [0,T]$ and each $\nu \in \Qpazo$, where ``$\co$'' denotes the \textit{closed convex hull} of a set, then $\Qpazo$ is viable. That is, there exists then for every $(\tau,\mu_{\tau}) \in [0,T] \times \Qpazo$ a solution $\mu(\cdot)$ of the Cauchy problem \eqref{eq:IntroCauchy} satisfying $\mu(t) \in \Qpazo$ for all times $t \in [\tau,T]$. Reciprocally, we prove in Theorem \ref{thm:ViabStationaryNesc} that if $\Qpazo$ is viable, then it necessarily holds that 
\begin{equation}
\label{eq:IntroViab1Bis}
V(t,\nu) \cap T_{\Qpazo}(\nu) \neq \emptyset 
\end{equation}
for $\Lcal^1$-almost every $t \in [0,T]$ and each $\nu \in \Qpazo$. We point the interested reader to our recent work \cite{ViabCDC}, where we leveraged a weaker version of these viability theorems to prove the existence of exponentially stable solutions to a class of continuity inclusions via the second method of Lyapunov, see also the second example in Section \ref{section:Cone} below. In Theorem \ref{thm:Invariance}, we subsequently show that the stronger condition 
\begin{equation}
\label{eq:IntroInv1}
V(t,\nu) \subset \co T_{\Qpazo}(\nu),
\end{equation}
which is assumed to hold for $\Lcal^1$-almost every $t \in [0,T]$ and each $\nu \in \Qpazo$, is equivalent to the invariance of $\Qpazo$ under the dynamics of \eqref{eq:IntroCauchy}, namely to the fact that $\mu(t) \in \Qpazo$ for all times $t \in [\tau,T]$ and every admissible curve $\mu(\cdot)$ starting from $\mu_{\tau} \in \Qpazo$ at time $\tau \in [0,T]$.  

We then turn our attention to the more involved scenario in which the constraints are allowed to be time-dependent. In Theorem \ref{thm:NecessaryConds}, we start by showing that regardless of its regularity, if the tube $\Qpazo : [0,T] \tto \Pcal_p(\R^d)$ is viable for \eqref{eq:IntroCauchy}, namely if for any $\tau \in [0,T]$ and each $\mu_{\tau} \in \Qpazo(\tau)$, there exists a solution of \eqref{eq:IntroCauchy} satisfying $\mu(t) \in \Qpazo(t)$ for all time $t \in [\tau,T]$, then necessarily   
\begin{equation}
\label{eq:IntroViab2}
\big( \{ 1 \} \times V(t,\nu) \big) \cap T_{\Graph(\Qpazo)}(t,\nu) \neq \emptyset 
\end{equation}
for $\Lcal^1$-almost every $t \in [0,T]$ and all $\nu \in \Qpazo(t)$, where
\begin{equation*}
\Graph(\Qpazo) := \Big\{ (t,\nu) \in [0,T] \times \Pcal_p(\R^d) ~\,\textnormal{s.t.}~ \nu \in \Qpazo(t) \Big\}
\end{equation*}
denotes the \textit{graph} of $\Qpazo : [0,T] \tto \Pcal_p(\R^d)$ . In addition, it follows in this context that $\Qpazo : [0,T] \tto \Pcal_p(\R^d)$ is actually \textit{left absolutely continuous} (see Definition \ref{def:AC} below), as it inherits some of the regularity properties of the reachable sets of \eqref{eq:IntroCauchy}. Similarly, if one posits that $\Qpazo : [0,T] \tto \Pcal_p(\R^d)$ is invariant for \eqref{eq:IntroCauchy}, that is if $\mu(t) \in \Qpazo(t)$ for all times $t \in [\tau,T]$ along every solution of \eqref{eq:IntroCauchy} starting from some $\mu_{\tau} \in \Qpazo(\tau)$ at time $\tau \in [0,T]$, then it must hold that
\begin{equation}
\label{eq:IntroInv2}
\big( \{1\} \times V(t,\nu) \big) \subset T_{\Graph(\Qpazo)}(t,\nu) 
\end{equation}
for $\Lcal^1$-almost every $t \in [0,T]$ and all $\nu \in \Qpazo(t)$. 

Unlike the aforedescribed necessary implications, sufficient viability and invariance conditions for time-dependent constraints call for separate analyses depending on the regularity of the tube $\Qpazo : [0,T] \tto \Pcal_p(\R^d)$. When the latter is \textit{absolutely continuous} (see Definition \ref{def:AC} below), we prove in Theorem \ref{thm:ViabTubeAC} that $\Qpazo : [0,T] \tto \Pcal_p(\R^d)$ is viable for \eqref{eq:IntroCauchy} whenever the geometric condition 
\begin{equation}
\label{eq:IntroViab2Bis}
\big( \{ 1 \} \times V(t,\nu) \big) \cap \co T_{\Graph(\Qpazo)}(t,\nu) \neq \emptyset 
\end{equation}
holds for $\Lcal^1$-almost every $t \in [0,T]$ and all $\nu \in \Qpazo(t)$. We stress that in this context, the convexification of the contingent directions requires much more care than in the stationary case, as one must combine tangent velocities corresponding to possibly different time instants. Similarly to what precedes, we then show in Theorem \ref{thm:InvarianceTube} below that $\Qpazo : [0,T] \tto \Pcal_p(\R^d)$ is invariant for \eqref{eq:IntroCauchy} provided that 
\begin{equation}
\label{eq:IntroInv2Bis}
V(t,\nu) \subset \co T_{\Graph(\Qpazo)}(t,\nu)
\end{equation}
for $\Lcal^1$-almost every $t \in [0,T]$ and all $\nu \in \Qpazo(t)$. Finally in Theorem \ref{thm:ViabTubeACL}, we address sufficient viability and invariance conditions when the tube $\Qpazo : [0,T] \tto \Pcal_p(\R^d)$ is merely left absolutely continuous. This regularity framework -- which, as already illustrated above, appears naturally when studying continuity inclusions with state-constraints -- is very similar to the one we previously discussed, with the added subtlety that in this context one cannot convexify the contingent directions anymore. Thence, the sufficient condition which ensures that $\Qpazo : [0,T] \tto \Pcal_p(\R^d)$ is viable for \eqref{eq:IntroCauchy} becomes 
\begin{equation}
\label{eq:IntroViab3}
\big( \{1\} \times V(t,\nu) \big) \cap T_{\Graph(\Qpazo)}(t,\nu) \neq \emptyset
\end{equation}
for $\Lcal^1$-almost every $t \in [0,T]$ and all $\nu \in \Qpazo(t)$. Similarly to the stationary and absolutely continuous cases, one can show that $\Qpazo : [0,T] \tto \Pcal_p(\R^d)$ is invariant for \eqref{eq:IntroCauchy} if the geometric condition 
\begin{equation}
\label{eq:IntroInv3}
\big( \{1\} \times V(t,\nu) \big) \subset T_{\Graph(\Qpazo)}(t,\nu) 
\end{equation}
holds for $\Lcal^1$-almost every $t \in [0,T]$ and each $\nu \in \Qpazo(t)$. At this stage, it is worth noting that while conditions \eqref{eq:IntroViab1} to \eqref{eq:IntroViab3} involve the whole contingent cone to the constraints, it is sufficient in practice to test their validity only on some nice subsets of tangent directions which are easy to compute. This fact is expounded in Section \ref{section:Cone}, in which we exhibit collections of tangents for two simple, yet frequently encountered families of constraints sets, defined respectively in terms of support inclusions or as lifted epigraphs. In the first situation, we display certain adjacent directions which are amenable to computations, while in the second one, we are able to fully characterise a relevant subset of the contingent cone. 

In terms of bibliographical positioning, this work can be seen as a far-reaching extension of \cite{Badreddine2022} by the second author, in which viability and invariance properties are established as a means to prove the well-posedness of general Hamilton-Jacobi-Bellman equations for optimal control problems in Wasserstein spaces, under more restrictive regularity assumptions on the data. We also point to the independent works \cite{Averboukh2018,Averboukh2021,CavagnariMQ2021} which focus on the study of viability properties for another class of set-valued dynamics in Wasserstein spaces, which we already alluded earlier. The main differences between the latter notion and the one considered in the present article and the related works of both authors is thoroughly discussed in \cite{ContInc}. 

\bigskip

The paper is organised as follows. In Section \ref{section:Preliminaries}, we start by recalling a list of preliminary material pertaining to optimal transport, set-valued analysis, and continuity inclusions. Subsequently, we discuss in Section \ref{section:Quali} the infinitesimal behaviour of reachable sets to continuity inclusions, which constitute novel contributions to the theory on which the main results of Section \ref{section:CauchyLip} crucially rely. This latter section is then split into two parts in which we discuss viability and invariance results for continuity inclusions, starting with the case of stationary constraints in Section \ref{subsection:ViabCauchy}. We then approach time-dependent constraints in Section \ref{subsection:ViabCauchyAbs}, wherein the necessary and sufficient conditions for viability and invariance are exposed separately for absolutely continuous and left absolutely continuous constraints. Finally, we show a couple of relevant examples of constraints sets and compute some of their tangent directions in Section \ref{section:Cone}, and close the paper by an appendix that contains the proofs of two intermediate technical results.  


\section{Preliminaries}
\label{section:Preliminaries}
\setcounter{equation}{0} \renewcommand{\theequation}{\thesection.\arabic{equation}}

In this section, we fix the notations that will be used throughout the manuscript, and list a series of prerequisites of optimal transport theory, set-valued analysis and Wasserstein geometry.  


\subsection{Measure theory and optimal transport}

In this first preliminary section, we recollect common notions of measure theory and optimal transport, for which we refer the reader to the monographs \cite{AmbrosioFuscoPallara,EvansGariepy} and \cite{AGS,OTAM,villani1} respectively. 


\paragraph*{Function spaces and measure theory.}

Given two complete separable metric spaces $(X,\dsf_X(\cdot,\cdot))$ and $(Y,\dsf_Y(\cdot,\cdot))$, we denote by $C^0(X,Y)$ the space of continuous functions from $X$ into $Y$, and likewise by $C^0_b(X,Y)$ the subspace of continuous and bounded functions. In this context, we will use the notation $\Lip(\varphi \, ; \Omega) \in \R_+ \cup \{+\infty\}$ for the Lipschitz constant of a map $\varphi : X \to Y$ over some subset $\Omega \subset X$. In the particular case where $(X,\dsf_X(\cdot,\cdot)) = (I,|\cdot|)$ for a closed interval $I \subset \R$, we shall write $\AC(I,Y)$ for the collection of absolutely continuous curves valued in $Y$. We shall also denote by $C^{\infty}_c(\R^d,\R^m)$ the space of infinitely differentiable functions with compact support from $\R^d$ into $\R^m$ for some $d,m \geq 1$.

In what follows, we let $\Pcal(\R^d)$ be the space of Borel probability measures defined over $(\R^d,|\cdot|)$. Recalling that the latter is a subset of the topological dual of $C^0_b(\R^d,\R)$, it can be endowed with the usual weak-$^*$ or \textit{narrow topology}, which is the coarsest topology such that
\begin{equation*}
\mu \in \Pcal(\R^d) \mapsto \INTDom{\varphi(x)}{\R^d}{\mu(x)} \in \R
\end{equation*}
defines a continuous mapping for every $\varphi \in C^0_b(\R^d,\R)$. In this context, given $\mu \in \Pcal(\R^d)$ and some $p \in [1,+\infty)$, the notation $(\Lpazo^p(\R^d,\R^d;\mu),\NormLp{\cdot}{p}{\R^d,\R^d\,;\mu})$ will refer to the seminormed vector space of Borel maps from $\R^d$ into itself which are $p$-integrable with respect to $\mu$ (see e.g. \cite[Chapter 2.4]{Bogachev}). We will also denote by $\Lcal^1$ the standard $1$-dimensional Lebesgue measure, and given a closed interval $I \subset \R$ along with a separable Banach space $(X,\Norm{\cdot}_X)$, we let $(L^1(I,X),\NormL{\cdot}{1}{I})$ stand for the Banach space of (all equivalence classes of) maps which are $\Lcal^1$-measurable and integrable in the sense of Bochner, see for instance \cite[Chapter II]{DiestelUhl}.

In the following definition, given some interval $I \subset \R$, we recall the classical notions of \textit{one-sided density points} of an $\Lcal^1$-measurable subset $\Acal \subset I$, along with that of \textit{one-sided Lebesgue points} for a Lebesgue integrable map defined over $I$. It is a well-known result in measure theory that these sets both have full $\Lcal^1$-measure in $\Acal$ and $I$, respectively. 

\begin{Def}[Lebesgue and density points]
\label{def:LebesgueDensity}
Given a Lebesgue measurable set $\Acal \subset I$, its \textnormal{one-sided density points} are defined as the elements $\tau \in \Acal$ satisfying 
\begin{equation*}
\frac{\Lcal^1([\tau,\tau+h] \cap \Acal)}{|h|} ~\underset{h \to 0}{\longrightarrow}~ 1. 
\end{equation*}
Similarly, given a map $f \in L^1(I,X)$, we denote by $\Tcal_f \subset I$ the subset of its \textnormal{one-sided Lebesgue points}, which are the elements $\tau \in I$ at which 
\begin{equation*}
\frac{1}{h} \INTSeg{\Norm{f(t) - f(\tau)}_X}{t}{\tau}{\tau+h} ~\underset{h \to 0}{\longrightarrow}~ 0. 
\end{equation*}
\end{Def}

For any real number $p \in [1,+\infty)$, we denote by $\Pcal_p(\R^d)$ the subset of Borel probability measures whose \textit{moment of order $p$}, defined by 
\begin{equation*}
\Mpazo_p(\mu) := \bigg( \INTDom{|x|^p}{\R^d}{\mu(x)} \bigg)^{1/p},
\end{equation*}
is finite. In what follows, we write $f_{\sharp} \mu \in \Pcal(\R^d)$ for the \textit{image measure} of an element $\mu \in \Pcal(\R^d)$ through a Borel map $f : \R^d \to \R^d$, which is uniquely characterised by the identity
\begin{equation*}
\INTDom{\varphi(x)}{\R^d}{(f_{\sharp} \mu)(x)} = \INTDom{\varphi \circ f(x)}{\R^d}{\mu(x)}
\end{equation*}
satisfied for every bounded Borel mapping $\varphi : \R^d \to \R^d$, wherein ``$\circ$'' stands for the composition operation between functions. The set of \textit{transport plans} between two measures $\mu,\nu \in \Pcal(\R^d)$ is then defined by
\begin{equation*}
\Gamma(\mu,\nu) := \Big\{ \gamma \in \Pcal_p(\R^{2d}) ~\, \textnormal{s.t.}~  \pi^1_{\sharp} \gamma = \mu ~\text{and}~ \pi^2_{\sharp} \gamma = \nu \Big\},
\end{equation*}
where $\pi^1,\pi^2 : \R^d \times \R^d \to \R^d$ represent the projections onto the first and second factors respectively. In this context, the \textit{Wasserstein distance of order $p$} between $\mu$ and $\nu$ is the quantity given by
\begin{equation*}
W_p(\mu,\nu) := \min_{\gamma \in \Gamma(\mu,\nu)} \bigg( \INTDom{|x-y|^p}{\R^{2d}}{\gamma(x,y)} \bigg)^{1/p},
\end{equation*}
and we henceforth denote by $\Gamma_o(\mu,\nu)$ the set of $p$-optimal transport plans for which the minimum is reached. Owing to the definition of this distance in the form of an infimum, it can be checked straightforwardly following e.g. \cite[Chapter 7]{AGS} that
\begin{equation}
\label{eq:WassEst}
W_p(\zeta_{\sharp}\mu,\xi_{\sharp} \mu) \leq~ \NormLp{\xi - \zeta}{p}{\R^d,\R^d ; \; \mu}
\end{equation}
for any $\mu \in \Pcal_p(\R^d)$ and each pair of elements $\zeta,\xi \in \Lpazo^p(\R^d,\R^d;\mu)$. In the following proposition, we recall elementary facts regarding the topology of Wasserstein spaces, see e.g. \cite[Proposition 7.1.5]{AGS}.

\begin{prop}[Topological properties of Wasserstein spaces]
\label{prop:Wass}
The metric spaces $(\Pcal_p(\R^d),W_p)$ are complete and separable, and their topology is stronger than the narrow topology. Moreover, a set $\Kpazo \subset \Pcal_p(\R^d)$ is \textnormal{relatively compact} for the $W_p$-metric if and only if 
\begin{equation*}
\sup_{\mu \in \Kpazo} \INTDom{|x|^p}{\{x \in \R^d ~\textnormal{s.t.}\, |x| \geq k \}}{\mu(x)} ~\underset{k \to +\infty}{\longrightarrow}~ 0,
\end{equation*}
that is, if and only if it is $p$-uniformly integrable. 
\end{prop}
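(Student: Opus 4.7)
The plan is to prove the four assertions in sequence, with the central technical ingredient being the equivalence between $W_p$-convergence and the conjunction of narrow convergence with $p$-uniform integrability of the sequence. I would begin with \emph{separability} by taking as countable dense subset the finite convex combinations of Dirac masses at points of $\mathbb{Q}^d$ with rational weights: given $\mu \in \Pcal_p(\R^d)$, truncate $\mu$ onto a large ball $B_R$, the induced error in $W_p$ being controlled by $\INTDom{|x|^p}{\R^d \setminus B_R}{\mu(x)}$, and then approximate the truncated measure by such rational atomic measures via a fine partition of $B_R$. For the \emph{topology comparison}, if $W_p(\mu_n,\mu) \to 0$ then for any bounded Lipschitz test function $\varphi$ and any $\gamma_n \in \Gamma_o(\mu_n,\mu)$, Jensen's inequality yields
\begin{equation*}
\left| \INTDom{\varphi(x)}{\R^d}{\mu_n(x)} - \INTDom{\varphi(x)}{\R^d}{\mu(x)} \right| \leq \Lip(\varphi \, ; \R^d) \INTDom{|x-y|}{\R^{2d}}{\gamma_n(x,y)} \leq \Lip(\varphi \, ; \R^d) W_p(\mu_n,\mu),
\end{equation*}
so bounded Lipschitz duality gives narrow convergence and the $W_p$-topology is seen to be stronger than the narrow one.

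For \emph{completeness}, let $\{\mu_n\}$ be $W_p$-Cauchy. Since $\Mpazo_p(\mu) = W_p(\mu,\delta_0)$ by testing against the trivial plan, the triangle inequality yields a uniform bound on $\Mpazo_p(\mu_n)$, which in turn gives tightness via Markov's inequality and hence, via Prokhorov, a narrowly convergent subsequence $\mu_{n_k} \to \mu$. A Fatou argument on truncations of $x \mapsto |x|^p$ then places $\mu$ in $\Pcal_p(\R^d)$. The Cauchy property combined with the uniform moment bound yields $p$-uniform integrability of $\{\mu_n\}$, which together with narrow convergence of the subsequence gives $W_p(\mu_{n_k},\mu) \to 0$ via the key lemma mentioned below, and then the Cauchy property promotes this to full convergence $W_p(\mu_n,\mu) \to 0$. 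For the \emph{relative compactness} characterisation, necessity proceeds by contradiction: if $\Kpazo$ is $W_p$-relatively compact but fails $p$-uniform integrability, extract $\mu_n \in \Kpazo$ and radii $k_n \to +\infty$ with the outer tail mass bounded below by some $\epsilon > 0$, pass to a $W_p$-convergent subsequence with limit $\mu \in \Pcal_p(\R^d)$, and contradict the implied convergence $\Mpazo_p(\mu_n) \to \Mpazo_p(\mu)$ (which follows from $W_p$-convergence) combined with the $p$-uniform integrability of the single measure $\mu$ obtained via dominated convergence. Sufficiency follows from the fact that $p$-uniform integrability entails tightness by Markov estimates, so any sequence in $\Kpazo$ admits a narrowly convergent subsequence, which under the same $p$-uniform integrability hypothesis upgrades to $W_p$-convergence.

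The main obstacle is the implication \emph{narrow convergence $+$ $p$-uniform integrability $\Longrightarrow$ $W_p$-convergence}, which underpins both the identification of the limit in completeness and the sufficiency in the compactness criterion. Carrying it out requires decomposing the transport cost along a sequence of couplings between a core region on a large ball $B_R$, where narrow convergence applies to bounded continuous integrands of the form $|x-y|^p \wedge R^p$ (or equivalently via a Skorokhod representation of the narrowly convergent subsequence), and the complementary tail, whose contribution is controlled uniformly in $n$ by the $p$-uniform integrability hypothesis; letting $R \to +\infty$ at the end then delivers the desired vanishing of $W_p(\mu_n,\mu)$.
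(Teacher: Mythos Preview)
The paper does not actually prove this proposition: it is stated as a preliminary fact with a pointer to \cite[Proposition 7.1.5]{AGS}, so there is no ``paper's own proof'' to compare against. Your sketch is essentially the standard argument one finds in that reference and in \cite{villani1}, and the identification of the key technical step --- that narrow convergence together with $p$-uniform integrability upgrades to $W_p$-convergence --- is exactly right.

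One minor point worth tightening in your necessity argument for relative compactness: the contradiction is not quite that $\Mpazo_p(\mu_n) \to \Mpazo_p(\mu)$ alone fails, since moment convergence by itself is compatible with mass escaping to infinity along the sequence. What you need is that $W_p$-convergence implies, for every fixed $R>0$, convergence of the truncated moments $\int_{\{|x| \leq R\}} |x|^p \, \mathrm{d}\mu_n \to \int_{\{|x| \leq R\}} |x|^p \, \mathrm{d}\mu$ (via narrow convergence and a continuity-set argument or a smooth cutoff), which combined with full moment convergence forces the tails $\int_{\{|x| > R\}} |x|^p \, \mathrm{d}\mu_n$ to converge to the corresponding tail of $\mu$; the latter can be made smaller than $\epsilon/2$ uniformly in $n$ for $R$ large, contradicting your assumed lower bound $\epsilon$ along the subsequence with $k_n \to +\infty$. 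This is what you gesture at with ``$p$-uniform integrability of the single measure $\mu$'', but the subtraction step deserves to be made explicit.
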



\paragraph*{Wasserstein geometry.}

In addition to their convenient topological properties, the Wasserstein spaces can be endowed with a geometric structure that greatly resembles that of a Riemannian manifold when $p=2$. For a general $p \in [1,+\infty)$, it is discussed in depth throughout \cite[Chapter 8]{AGS} that $(\Pcal_p(\R^d),W_p)$ can be seen as a bundle, whose fibers are the closed cones 
\begin{equation*}
\Tan_{\mu} \Pcal_p(\R^d) := \overline{\Big\{ j_q(\nabla \varphi) ~\, \textnormal{s.t.}~ \varphi \in C^{\infty}_c(\R^d,\R) \Big\}}^{\Lpazo^p(\R^d,\R^d ;\,\mu)}
\end{equation*}
defined at each $\mu \in \Pcal_p(\R^d)$. Therein, $q \in (1,+\infty]$ stands for the conjugate exponent of $p \in [1,+\infty)$, and $j_q : \Lpazo^q(\R^d,\R^d;\mu) \to \Lpazo^p(\R^d,\R^d;\mu)$ refers to the so-called \textit{duality map}, which is given by 
\begin{equation}
\label{eq:DualityDef}
j_q(\xi) := \left\{
\begin{aligned}
& 0 ~~ & \text{if $\xi =0$},  \\
& |\xi|^{q-2} \, \xi ~~ & \text{otherwise}
\end{aligned}
\right.
\end{equation}
for each $\xi \in \Lpazo^q(\R^d,\R^d;\mu)$. In the following proposition, we provide an adhoc version of the joint directional superdifferentiability inequalities satisfied by the $p$-Wasserstein distance whenever $p > 1$. We point the reader to \cite[Theorem 10.2.2]{AGS} for the general case.

\begin{prop}[Joint directional superdifferentiability of the Wasserstein distance]
\label{prop:SuperdiffWass}
Fix an element $p \in (1,+\infty)$ and $\mu,\nu \in \Pcal_p(\R^d)$. Then for each $\xi \in \Lpazo^p(\R^d,\R^d;\mu)$ and $\zeta \in \Lpazo^p(\R^d,\R^d;\nu)$, it holds that
\begin{equation*}
\tfrac{1}{p} W_p^p \Big( (\Id + h\zeta)_{\sharp} \mu, (\Id + h\xi)_{\sharp}\nu \Big) - \tfrac{1}{p} W_p^p(\mu,\nu) \leq h \INTDom{\big\langle \zeta(x) - \xi(y), j_p(x-y) \big\rangle}{\R^{2d}}{\gamma(x,y)} + r_p(h,\zeta,\xi)
\end{equation*}
for any $h \in \R$ and every $\gamma \in \Gamma_o(\mu,\nu)$, where the remainder term $r_p(h,\zeta,\xi)$ is given explicitly by
\begin{equation}
\label{eq:Remainder1}
\begin{aligned}
r_p(h,\zeta,\xi) & := (p-1) \bigg( W_p(\mu,\nu) + |h| \Big( \hspace{-0.1cm} \NormLp{\zeta}{p}{\R^d,\R^d; \,\mu} + \NormLp{\xi}{p}{\R^d,\R^d; \,\nu} \hspace{-0.1cm} \Big) \bigg)^{p-2} \\
& \hspace{5.15cm} \times \Big( \hspace{-0.1cm} \NormLp{\zeta}{p}{\R^d,\R^d; \, \mu}^2 + \NormLp{\xi}{p}{\R^d,\R^d; \,\nu}^2 \hspace{-0.1cm} \Big) |h|^2
\end{aligned}
\end{equation}
when $p \in [2,+\infty)$, and by
\begin{equation}
\label{eq:Remainder2}
r_p(h,\zeta,\xi) := \tfrac{2}{p-1} \Big( \hspace{-0.1cm} \NormLp{\zeta}{p}{\R^d,\R^d;\, \mu}^p + \NormLp{\xi}{p}{\R^d,\R^d; \, \nu}^p \hspace{-0.1cm} \Big) |h|^p
\end{equation}
when $p \in (1,2]$. In particular, there exists a constant $C_p > 0$ which only depends on the magnitudes of $p,\Mpazo_p(\mu),\Mpazo_p(\nu),\NormLp{\zeta}{p}{\R^d,\R^d \, ; \mu}$ and $\NormLp{\xi}{p}{\R^d,\R^d \, ; \nu}$, such that 
\begin{equation*}
r_p(h,\zeta,\xi) \leq C_p |h|^{\min\{p,2\}}
\end{equation*}
whenever $h \in (0,1]$. Moreover, note that $r_p(h,\zeta,\xi) = 0$ if $\zeta = \xi = 0$. 
\end{prop}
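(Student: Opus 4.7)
The plan is to leverage the variational definition of $W_p$ as an infimum, together with quantitative pointwise calculus for the convex map $u \in \R^d \mapsto \tfrac{1}{p}|u|^p$, whose gradient is by construction $j_p$. Fixing any optimal plan $\gamma \in \Gamma_o(\mu,\nu)$, the transport $T_h : (x,y) \mapsto (x + h\zeta(x),\, y + h\xi(y))$ pushes $\gamma$ onto an admissible competitor $(T_h)_{\sharp}\gamma \in \Gamma\big((\Id+h\zeta)_{\sharp}\mu,\, (\Id+h\xi)_{\sharp}\nu\big)$, so that by minimality
\begin{equation*}
\tfrac{1}{p} W_p^p\Big( (\Id+h\zeta)_{\sharp}\mu,\, (\Id+h\xi)_{\sharp}\nu \Big) \leq \tfrac{1}{p} \INTDom{\big| (x-y) + h(\zeta(x) - \xi(y)) \big|^p}{\R^{2d}}{\gamma(x,y)}.
\end{equation*}
This reduces the task to a pointwise expansion of the integrand in $h$ followed by an integration step carried out uniformly in $\gamma$.

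I would then establish a pointwise inequality of the form
\begin{equation*}
\tfrac{1}{p}|u+v|^p - \tfrac{1}{p}|u|^p - \langle j_p(u), v \rangle \leq R_p(u,v)
\end{equation*}
for arbitrary $u,v \in \R^d$, with a remainder $R_p$ whose shape depends on whether $p \in [2,+\infty)$ or $p \in (1,2]$. In the former regime, $\tfrac{1}{p}|\cdot|^p$ is $C^2$ away from the origin with Hessian of operator norm at most $(p-1)|\cdot|^{p-2}$; Taylor's formula with integral remainder, combined with $|u + \theta v| \leq |u|+|v|$ for $\theta \in [0,1]$, then yields $R_p(u,v) = \tfrac{p-1}{2}(|u|+|v|)^{p-2}|v|^2$. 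In the latter regime, the second derivative of $|\cdot|^p$ is unbounded near the origin, so one instead relies on an elementary convexity argument---distinguishing the cases $|v| \leq |u|$ and $|v| > |u|$---to reach a pointwise bound of the form $R_p(u,v) \leq \tfrac{2}{p(p-1)}|v|^p$.

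Evaluating these inequalities at $u := x-y$ and $v := h(\zeta(x)-\xi(y))$ and integrating against $\gamma$ reproduces the linear term of the proposition by definition of $j_p$. For $p \in [2,+\infty)$, Hölder's inequality with exponents $\big(p/(p-2),\, p/2\big)$ followed by Minkowski in $L^p(\gamma)$ dominates the remainder integral by
\begin{equation*}
\tfrac{p-1}{2} \Big( \| \,|x-y| \,\|_{L^p(\gamma)} + |h|\, \| \zeta(x)-\xi(y) \|_{L^p(\gamma)} \Big)^{p-2} \| \zeta(x)-\xi(y) \|_{L^p(\gamma)}^2\, |h|^2;
\end{equation*}
the optimality of $\gamma$ gives $\|\, |x-y|\, \|_{L^p(\gamma)} = W_p(\mu,\nu)$, a Minkowski step in the $(x,y)$-variables further yields $\|\zeta(x)-\xi(y)\|_{L^p(\gamma)} \leq \NormLp{\zeta}{p}{\R^d,\R^d; \,\mu} + \NormLp{\xi}{p}{\R^d,\R^d; \,\nu}$, and the elementary bound $(a+b)^2 \leq 2(a^2+b^2)$ finally matches \eqref{eq:Remainder1}. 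For $p \in (1,2]$, direct integration of $|v|^p$ combined with $|\zeta(x)-\xi(y)|^p \leq 2^{p-1}(|\zeta(x)|^p + |\xi(y)|^p)$ reproduces \eqref{eq:Remainder2}. The uniform estimate $r_p(h,\zeta,\xi) \leq C_p |h|^{\min\{p,2\}}$ for $h \in (0,1]$ then follows by absorbing the moments $\Mpazo_p(\mu),\Mpazo_p(\nu)$ and the seminorms of $\zeta,\xi$ into a single constant, while the vanishing at $\zeta = \xi = 0$ is read off both explicit formulas.

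The main obstacle is the pointwise step in the subquadratic regime $p \in (1,2]$, where the unboundedness of the second derivative of $|\cdot|^p$ at the origin precludes a Taylor expansion and forces a case-by-case convexity argument to pin down the sharp constant $\tfrac{2}{p-1}$; the remainder of the proof reduces to careful but essentially routine bookkeeping of Hölder and Minkowski estimates against the initially chosen optimal plan.
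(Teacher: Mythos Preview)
Your proposal is correct and follows essentially the same route as the paper's proof. Both arguments pick an optimal plan $\gamma \in \Gamma_o(\mu,\nu)$, push it forward by $T_h$ to obtain a competitor, reduce to the pointwise expansion of $\tfrac{1}{p}|u+v|^p - \tfrac{1}{p}|u|^p - \langle j_p(u),v\rangle$, and then integrate via H\"older and Minkowski; the only cosmetic difference is that the paper quotes the pointwise bound from \cite[Lemma 10.2.1]{AGS} (with constant $\tfrac{2^{2-p}}{p-1}$ when $p\in(1,2]$ and $\tfrac{p-1}{2}\max\{|u|,|u+v|\}^{p-2}$ when $p\geq 2$), whereas you sketch its derivation directly.
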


\begin{proof}
For the sake of readability, the proof of this result is deferred to Appendix \ref{section:AppendixSuperdiff}. 
\end{proof}


\subsection{Set-valued analysis and topological properties of $C^0(\R^d,\R^d)$}

In this second preliminary section, we recollect pivotal concepts of set-valued analysis and discuss some of the topological features of the space $C^0(\R^d,\R^d)$ that will prove useful in the sequel. We point the reader to the reference treatises \cite{Aubin1984,Aubin1990} for the former topics, and to \cite{Kelley1975,Rudin1987} for the latter.


\paragraph*{Elementary notations.}

Given a metric space $(X,\dsf_X(\cdot,\cdot))$, we will denote the closed ball of radius $R>0$ centered at $x \in X$ by $\B_X(x,R) := \big\{ x' \in X ~\, \textnormal{s.t.}~ \dsf_X(x,x') \leq R \big\}$, and write
\begin{equation*}
\dist_X(\Qpazo \, ; \Qpazo') := \inf_{(x,x') \in \Qpazo \times \Qpazo'} \dsf_X(x,x')
\end{equation*}
for the usual distance between two closed sets $\Qpazo,\Qpazo' \subset X$. Throughout the article, we will also work with the \textit{Hausdorff metric}, which is defined by 
\begin{equation}
\label{eq:Hausdorff}
\dsf_{\Hpazo}(\Kpazo,\Kpazo') := \inf \Big\{ \epsilon > 0 ~\, \textnormal{s.t.}~ \Kpazo \subset \B_X(\Kpazo',\epsilon) ~\textnormal{and}~ \Kpazo' \subset \B_X(\Kpazo,\epsilon) \Big\}
\end{equation}
for each pair of compact sets $\Kpazo,\Kpazo' \subset X$, where we used the condensed notation
\begin{equation*}
\B_X \big( \Kpazo,\epsilon \big) := \bigcup_{x \in \Kpazo} \B_X(x,\epsilon). 
\end{equation*}
In our subsequent developments, we will denote by $\textnormal{int}(\Qpazo)$ and $\partial \Qpazo := \Qpazo \setminus \textnormal{int}(\Qpazo)$ the interior and topological boundary of a set $\Qpazo$. In the particular case in which $(X,\dsf_X(\cdot,\cdot))$ possesses a linear structure -- e.g. when it is a Banach or a Fr\'echet space, see for instance \cite{Horvath2012} --, we define the \textit{closed convex hull} of any subset $\Bpazo \subset X$ as 
\begin{equation*}
\co(\Bpazo) = \overline{\textnormal{co}(\Bpazo)}^X := \overline{\bigcup_{N \geq 1} \bigg\{ \mathsmaller{\sum}\limits_{j=1}^N \alpha_j b_j ~\, ~\textnormal{s.t.}~~ b_j \in \Bpazo, ~ \alpha_j \geq 0 ~~ \text{for $j \in \{1,\dots,N\}$} ~~ \text{and} ~~ \mathsmaller{\sum}\limits_{j=1}^N \alpha_j =1  \bigg\}}^X,
\end{equation*}
wherein ``$\overline{\bullet}^X$'' stands for the sequential closure with respect to $\dsf_X(\cdot,\cdot)$. We finally recall the definition of the so-called \textit{proper subsets} of a metric space. 

\begin{Def}[Proper subsets of a metric space]
\label{def:Proper}
A closed set $\Qpazo \subset X$ is \textnormal{proper} provided that $\Qpazo \cap \B_X(x,R) $ is compact for each $x \in X$ and every $R>0$. 
\end{Def}

Note that in the previous definition, one could alternatively require that $\Qpazo \cap \B_X(x,R)$ be compact for each $x \in \Qpazo$ instead of $x \in X$.


\paragraph*{Set-valued analysis.}

We recall that a \textit{set-valued map} -- or \textit{correspondence} -- between two metric spaces $(X,\dsf_X(\cdot,\cdot))$ and $(Y,\dsf_Y(\cdot,\cdot))$ is an application $\Fpazo : X \tto Y$ whose images are subsets of $Y$, namely $\Fpazo(x) \subset Y$ for all $x \in X$. In this context, the \textit{graph} of $\Fpazo$ is the subset of $X \times Y$ defined by
\begin{equation*}
\Graph(\Fpazo) := \Big\{ (x,y) \in X \times Y ~\, \textnormal{s.t.}~ y \in \Fpazo(x) \Big\}.
\end{equation*}
In the coming definitions, we recall the main regularity notions for set-valued mappings with values in metric spaces, starting with those of \textit{continuity} and \textit{Lipschitz regularity}.

\begin{Def}[Continuity of set-valued maps]
\label{def:Continuity}
A correspondence $\Fpazo : X \tto Y$ is said to be \textnormal{continuous} at $x \in X$ if both the following conditions hold. 
\begin{enumerate}
\item[$(i)$] $\Fpazo$ is \textnormal{lower-semicontinuous} at $x$, i.e. for any $y \in \Fpazo(x)$ and $\epsilon > 0$, there exists $\delta > 0$ such that 
\begin{equation*}
\Fpazo(x') \cap \B_Y(y,\epsilon) \neq \emptyset
\end{equation*}
for each $x' \in \B_X(x,\delta)$.
\item[$(ii)$] $\Fpazo$ is \textnormal{upper-semicontinuous} at $x$, i.e. for any $\epsilon > 0$, there exists $\delta >0$ such that 
\begin{equation*}
\Fpazo(x') \subset \B_Y \big( \Fpazo(x),\epsilon \big)
\end{equation*}
for each $x' \in \B_X(x,\delta)$.
\end{enumerate} 
\end{Def}

\begin{Def}[Lipschitz continuity of set-valued maps]
A correspondence $\Fpazo : X \tto Y$ is said to be \textnormal{Lipschitz continuous} with constant $L >0$ provided that 
\begin{equation*}
\Fpazo(x') \subset \B_Y \Big( \Fpazo(x) , L \, \dsf_X(x,x') \Big)
\end{equation*}
for all $x,x' \in X$. When $\Fpazo : X \tto Y$ has compact images, this is equivalent to requiring that
\begin{equation*}
\dsf_{\Hpazo}(\Fpazo(x),\Fpazo(x')) \leq L \, \dsf_X(x,x')
\end{equation*}
for all $x,x' \in X$. 
\end{Def}

As for functions defined over the real line taking values in a metric space, it is possible to formulate several relevant notions of \textit{absolute continuity} for set-valued maps. Some of them involve the quantity
\begin{equation*}
\Delta_{x,R}(\Qpazo \, ; \Qpazo') := \inf \Big\{ \epsilon > 0 ~\, \textnormal{s.t.}~ \Qpazo \cap \B_X(x,R) \subset \B_X(\Qpazo',\epsilon) \Big\} \in \R_+ \cup \{+\infty\}, 
\end{equation*}
defined for each $x \in X$, $R > 0$ and every pair of nonempty closed sets $\Qpazo,\Qpazo' \subset X$, which can be seen as a sort of asymmetric and localised version of the Hausdorff distance recalled in \eqref{eq:Hausdorff}. 

\begin{Def}[Notions of absolute continuity for set-valued mappings]
\label{def:AC}
We say that a correspondence $\Fpazo : I \tto X$ with nonempty closed images is \textnormal{absolutely continuous} if for every $x \in X$ and each $R >0$, there exists a map $m_{x,R}(\cdot) \in L^1(I,\R_+)$ such that
\begin{equation*}
\max \Big\{ \Delta_{x,R} \big( \Fpazo(\tau) \, ; \Fpazo(t) \big) \, , \, \Delta_{x,R} \big( \Fpazo(t) \, ; \Fpazo(\tau) \big) \Big\} \leq \INTSeg{m_{x,R}(s)}{s}{\tau}{t}
\end{equation*}
for all times $\tau,t \in I$ satisfying $\tau \leq t$. Analogously, we say that $\Fpazo : I \tto X$ is \textnormal{left absolutely continuous} if only the one-sided inequality
\begin{equation*}
\Delta_{x,R} \big( \Fpazo(\tau) \, ; \Fpazo(t) \big) \leq \INTSeg{m_{x,R}(s)}{s}{\tau}{t}
\end{equation*} 
holds. In the case where $\Fpazo : I \tto X$ has compact images, we say that it is \textnormal{absolutely continuous in the Hausdorff metric} if there exists a map $m_{\Fpazo}(\cdot) \in L^1(I,\R_+)$ such that
\begin{equation*}
\dsf_{\Hpazo}(\Fpazo(\tau),\Fpazo(t)) \leq \INTSeg{m_{\Fpazo}(s)}{s}{\tau}{t}
\end{equation*}
for all times $\tau,t \in I$ such that $\tau \leq t$. 
\end{Def}

These notions of absolute continuity allow us to establish the following regularity statements on the distance between set-valued maps, which will prove crucial in some of our subsequent developments.

\begin{prop}[Regularity of the distance between set-valued maps]
\label{prop:AC}
Let $\Kpazo : I \tto X$ be a set-valued map with nonempty compact images that is absolutely continuous in the Hausdorff metric and $\Qpazo : I \tto X$ be an absolutely continuous set-valued map with nonempty closed images. Then, the map
\begin{equation*}
g : t \in I \mapsto \dist_X(\Kpazo(t) \, ; \Qpazo(t)) \in \R_+
\end{equation*}
is absolutely continuous. In the case where $\Qpazo : [0,T] \tto X$ is only left absolutely continuous, then the set-valued mapping
\begin{equation*}
\Ecal : t \in I \tto \Big\{ \alpha \in \R_+ ~\, \textnormal{s.t.}~ \alpha = g(t) + r ~ \text{for some $r \geq 0$} \Big\}
\end{equation*}
is left absolutely continuous as well.
\end{prop}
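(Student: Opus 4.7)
The plan is to reduce both claims to a one-sided differential inequality for $g$ obtained by transporting nearly-optimal pairs through the Hausdorff modulus of $\Kpazo$ and the localised modulus of $\Qpazo$, and then to specialise the computation to sets of the form $[g(t),+\infty)$ for the second assertion. I work on an arbitrary compact subinterval $[a,b]\subset I$; the conclusion on the whole of $I$ follows by covering $I$ with such subintervals.

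\textbf{Preparation.} Since $\Kpazo$ is absolutely continuous in the Hausdorff metric, it is continuous on $[a,b]$ with compact values, so the set $\bigcup_{t\in[a,b]} \Kpazo(t)$ is totally bounded and there exist $x_0 \in X$ and $R_0 > 0$ with $\Kpazo(t) \subset \B_X(x_0,R_0)$ for every $t \in [a,b]$. Applying the left absolute continuity of $\Qpazo$ with the pair $(x_0,\, R_0 + g(a) + 1)$, the key comparison below (carried out a first time between $a$ and an arbitrary $t\in[a,b]$) will show that $g$ is bounded on $[a,b]$ by some constant $G_0 \geq 0$; with this in hand I fix the uniform radius $R := R_0 + G_0 + 1$.

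\textbf{Key comparison.} Fix $\tau \leq t$ in $[a,b]$ and $\epsilon \in (0,1]$. Pick $x_\tau \in \Kpazo(\tau)$ and $y_\tau \in \Qpazo(\tau)$ with $\dsf_X(x_\tau,y_\tau) \leq g(\tau) + \epsilon$. Because $x_\tau \in \B_X(x_0,R_0)$, we have $y_\tau \in \Qpazo(\tau) \cap \B_X(x_0, R_0 + g(\tau) + 1) \subset \Qpazo(\tau) \cap \B_X(x_0,R)$, and the (left) absolute continuity of $\Qpazo$ yields $y_t \in \Qpazo(t)$ with
\[
\dsf_X(y_\tau,y_t) \leq \INTSeg{m_{x_0,R}(s)}{s}{\tau}{t} + \epsilon.
\]
The Hausdorff absolute continuity of $\Kpazo$ together with the compactness of $\Kpazo(t)$ provides $x_t \in \Kpazo(t)$ with $\dsf_X(x_\tau,x_t) \leq \INTSeg{m_\Kpazo(s)}{s}{\tau}{t}$. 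Combining these estimates via the triangle inequality and letting $\epsilon \to 0^+$ gives the one-sided bound
\[
g(t) - g(\tau) \leq \INTSeg{m(s)}{s}{\tau}{t}, \qquad m := m_\Kpazo + m_{x_0,R} \in L^1(I,\R_+).
\]
Under the stronger two-sided absolute continuity hypothesis on $\Qpazo$, the very same argument with the roles of $\tau$ and $t$ exchanged yields $g(\tau) - g(t) \leq \INTSeg{m(s)}{s}{\tau}{t}$, proving $g \in \AC([a,b],\R_+)$ and hence $g \in \AC(I,\R_+)$.

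\textbf{Left absolutely continuous case.} Here only the one-sided bound on $g$ is available. For any $x \in \R_+$ and any $R' > 0$, a direct computation with $\Ecal(\sigma) = [g(\sigma),+\infty)$ gives $\Ecal(\tau) \cap \B_{\R_+}(x,R') \subset [\max(g(\tau),x-R'),\, x+R']$ when nonempty, whose smallest element is at distance $\max(g(t)-g(\tau),0)^+$ from $\Ecal(t)$ at worst; therefore
\[
\Delta_{x,R'}\bigl(\Ecal(\tau),\Ecal(t)\bigr) \leq \bigl(g(t) - g(\tau)\bigr)^+ \leq \INTSeg{m(s)}{s}{\tau}{t},
\]
which is exactly the defining inequality for the left absolute continuity of $\Ecal$ (with a modulus that happens not to depend on $(x,R')$). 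The only delicate point, and the real content of the argument, is the preliminary bootstrap that fixes the radius $R$ uniformly on $[a,b]$: since $\Qpazo$ has merely closed images, one can only transport points of $\Qpazo(\tau)$ lying in a controlled ball around $x_0$, and it is the finiteness of $g(a)$ together with the one-sided control that allow this ball to be chosen once and for all.
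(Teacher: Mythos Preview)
Your proof is correct and follows essentially the same approach as the paper: localise the computation to a fixed ball around the compact-valued map $\Kpazo$, transport near-optimal pairs using the Hausdorff modulus of $\Kpazo$ and the localised modulus of $\Qpazo$, and combine via the triangle inequality. The only notable difference is in how the localisation radius is fixed: the paper first proves that $t\mapsto\dist_X(x_0;\Qpazo(t))$ is continuous (bounded, in the left-AC case) and uses this to define a uniform $R_T$, whereas you obtain the same uniform radius by a one-step bootstrap from the initial time $a$; your route is slightly more direct and avoids the auxiliary continuity argument, but the underlying mechanism is identical.
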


\begin{proof}
Being somewhat long and technical, the proof of this statement is deferred to Appendix \ref{section:AppendixAC}. 
\end{proof}

Analogously to the classical notions of regularity exposed hereinabove, it is possible to generalise the concept of \textit{measurability} to set-valued mappings, as highlighted by the following definition.

\begin{Def}[Measurability of set-valued maps]
A set-valued map $\Fpazo : I \tto X$ is said to be \textnormal{$\Lcal^1$-measurable} if the preimages 
\begin{equation*}
\Fpazo^{-1}(\Opazo) := \Big\{ t \in I ~\, \textnormal{s.t.}~ \Fpazo(t) \cap \Opazo \neq \emptyset \Big\}
\end{equation*}
are $\Lcal^1$-measurable for each open set $\Opazo \subset X$. Moreover, we say that an $\Lcal^1$-measurable map $f : I \to X$ is a \textnormal{measurable selection} of $\Fpazo : I \tto X$ if $f(t) \in \Fpazo(t)$ for $\Lcal^1$-almost every $t \in I$. 
\end{Def}

In the following theorem, we recall an instrumental result of set-valued analysis excerpted from \cite[Theorem 8.1.3]{Aubin1990}, which asserts that measurable correspondences with nonempty closed images always admit measurable selections. 

\begin{thm}[Existence of measurable selections]
\label{thm:MeasurableSel}
Suppose that $(X,\dsf_X(\cdot,\cdot))$ is a complete separable metric space. Then every  $\Lcal^1$-measurable set-valued map $\Fpazo : I \tto X$ with nonempty closed images admits a measurable selection.
\end{thm}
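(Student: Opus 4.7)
The plan is to deploy the classical Kuratowski--Ryll-Nardzewski successive approximation scheme, which rests on three ingredients: the separability of $X$, which supplies a countable dense subset $\{x_n\}_{n \in \N}$; the $\Lcal^1$-measurability of $\Fpazo$, guaranteeing that the sets $\Fpazo^{-1}(\Ucal)$ are $\Lcal^1$-measurable for every open $\Ucal \subset X$; and the completeness of $X$, which will permit passage to the limit in a uniformly Cauchy sequence of approximate selections. For brevity, for each $n \in \N$ and $r > 0$ I shall write $U_n^r := \{x' \in X ~\textnormal{s.t.}~ \dsf_X(x_n, x') < r\}$ for the open ball of radius $r$ centred at $x_n$, whose $\Fpazo$-preimage is therefore $\Lcal^1$-measurable.

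The core of the argument consists in constructing inductively, for each $k \geq 0$, an $\Lcal^1$-measurable map $f_k : I \to X$ with values in the countable set $\{x_n\}_{n \in \N}$, which satisfies $\dsf_X(f_k(t), \Fpazo(t)) < 2^{-k}$ for every $t \in I$ and, when $k \geq 1$, $\dsf_X(f_k(t), f_{k-1}(t)) < 2^{-k+2}$ uniformly in $t$. For $k = 0$, I would set $f_0(t) := x_{n_0(t)}$, where $n_0(t)$ is the smallest integer $n$ such that $t \in \Fpazo^{-1}(U_n^1)$. The existence of this index follows from the density of $\{x_n\}_{n \in \N}$ combined with the nonemptiness of $\Fpazo(t)$, and the $\Lcal^1$-measurability of $f_0$ is a consequence of the fact that each level set $\{t \in I ~\textnormal{s.t.}~ n_0(t) = n\}$ coincides with $\Fpazo^{-1}(U_n^1) \setminus \bigcup_{m<n} \Fpazo^{-1}(U_m^1)$.

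For the inductive step, given $f_k$ satisfying the two aforementioned estimates, I would define $f_{k+1}(t) := x_{n_{k+1}(t)}$, where $n_{k+1}(t)$ is the least $n \in \N$ such that both $t \in \Fpazo^{-1}(U_n^{2^{-(k+1)}})$ and $\dsf_X(x_n, f_k(t)) < 2^{-k+1}$ hold. The existence of such an index follows from a short triangle inequality argument: since $\dsf_X(f_k(t), \Fpazo(t)) < 2^{-k}$, one can pick $y \in \Fpazo(t)$ within distance $2^{-k}$ of $f_k(t)$, and then the density of $\{x_n\}_{n \in \N}$ supplies an index $n$ with $\dsf_X(x_n, y) < 2^{-(k+1)}$, which automatically verifies both requirements. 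The measurability of each level set $\{t \in I ~\textnormal{s.t.}~ n_{k+1}(t) = n\}$ would then follow by intersecting $\Fpazo^{-1}(U_n^{2^{-(k+1)}})$ with the measurable set $\{t \in I ~\textnormal{s.t.}~ \dsf_X(x_n, f_k(t)) < 2^{-k+1}\}$ --- the latter being measurable since $f_k$ itself is --- and then subtracting the union of the analogous sets for indices $m < n$.

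To conclude, I would observe that the uniform Cauchy estimates $\dsf_X(f_k(t), f_{k+1}(t)) < 2^{-k+1}$ ensure, by completeness of $(X, \dsf_X)$, the existence of a uniform limit $f : I \to X$, which is $\Lcal^1$-measurable as the pointwise limit of a sequence of measurable maps. Since $\dsf_X(f_k(t), \Fpazo(t)) \to 0$ as $k \to \infty$ and each image $\Fpazo(t)$ is closed, it follows that $f(t) \in \Fpazo(t)$ for every $t \in I$, which provides the sought-after measurable selection. The main technical obstacle in this scheme lies precisely in the inductive step, where one must verify the measurability of the integer-valued map $t \mapsto n_{k+1}(t)$; this is where the joint use of the $\Lcal^1$-measurability of $\Fpazo$ and the measurability of the previously constructed approximation $f_k$ enters in an essential way, and the reason why the scheme cannot be shortcut by merely picking a single ``nearest dense point'' at each stage.
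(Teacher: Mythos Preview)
Your proof is correct and follows the classical Kuratowski--Ryll-Nardzewski scheme faithfully; the inductive construction, the measurability verification for each $n_{k+1}$, and the passage to the limit via completeness are all sound. Note, however, that the paper does not actually prove this theorem: it is stated as a preliminary result and simply cited from \cite[Theorem 8.1.3]{Aubin1990}, so there is no in-paper argument to compare against. What you have supplied is precisely the standard proof one finds in the literature (and indeed in Aubin--Frankowska), so in that sense your proposal is entirely in line with what the paper implicitly relies upon.
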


We end this primer in set-valued analysis by recollecting a fine adaptation of the Scorza-Dragoni theorem for set-valued mappings between metric spaces, for which we refer to \cite[Theorem 1]{Bonnano1989}. 

\begin{thm}[Scorza-Dragoni property for set-valued mappings]
\label{thm:ScorzaMulti}
Suppose that $(X,\dsf_X(\cdot,\cdot))$ and $(Y,\dsf_Y(\cdot,\cdot))$ are complete separable metric spaces, and let $\Fpazo : I \times X \tto Y$ be a set-valued map with nonempty closed images, such that $t \in I \tto \Fpazo(t,x)$ is $\Lcal^1$-measurable for all $x \in X$ and $x \tto \Fpazo(t,x)$ is continuous for $\Lcal^1$-almost every $t \in I$. 

Then for every $\epsilon > 0$, there exists a compact set $\Acal_{\epsilon} \subset I$ such that $\Lcal^1(I \setminus \Acal_{\epsilon}) < \epsilon$, and for which the following holds. 
\begin{enumerate}
\item[$(i)$] The restricted set-valued mapping $\Fpazo : \Acal_{\epsilon} \times X \tto Y$ is lower-semicontinuous. 
\item[$(ii)$] The graph of the restricted set-valued mapping
\begin{equation*}
\Graph(\Fpazo)_{|\Acal_{\epsilon} \times X \times Y} := \Big\{ (t,x,y) \in \Acal_{\epsilon} \times X \times Y ~\, \textnormal{s.t.}~ y \in \Fpazo(t,x) \Big\}
\end{equation*}
is closed in $\Acal_{\epsilon} \times X \times Y$. 
\end{enumerate}
\end{thm}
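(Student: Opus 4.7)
The plan is to reduce the set-valued Scorza-Dragoni statement to the classical (single-valued) Scorza-Dragoni theorem applied to a countable family of distance functions. Since $(Y,\dsf_Y)$ is separable, I fix a countable dense subset $\{y_k\}_{k \geq 1} \subset Y$ and introduce the real-valued Carathéodory-type functions
\begin{equation*}
\varphi_k(t,x) := \dist_Y \big( y_k , \Fpazo(t,x) \big) , \qquad k \geq 1 .
\end{equation*}
Because $t \mapsto \Fpazo(t,x)$ is $\Lcal^1$-measurable with nonempty closed images, each $t \mapsto \varphi_k(t,x)$ is $\Lcal^1$-measurable for every $x \in X$ (the preimage of an open half-line is the preimage by $\Fpazo(\cdot,x)$ of an open ball), and the continuity of $x \mapsto \Fpazo(t,x)$ for a.e. $t \in I$ transfers straightforwardly to continuity of $x \mapsto \varphi_k(t,x)$ for the same set of times. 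Thus each $\varphi_k$ is a Carathéodory map from $I \times X$ into $\R_+$.

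Next, I apply the scalar Scorza-Dragoni theorem to each $\varphi_k$: for every $\epsilon>0$ and every $k \geq 1$, there exists a compact set $\Acal_k^{\epsilon} \subset I$ with $\Lcal^1(I \setminus \Acal_k^{\epsilon}) < \epsilon/2^k$ such that the restriction $\varphi_k : \Acal_k^{\epsilon} \times X \to \R_+$ is continuous. Setting
\begin{equation*}
\Acal_{\epsilon} := \bigcap_{k \geq 1} \Acal_k^{\epsilon} ,
\end{equation*}
one obtains a compact subset of $I$ with $\Lcal^1(I \setminus \Acal_{\epsilon}) < \epsilon$ on which every $\varphi_k$ is jointly continuous. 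The remainder of the proof consists in showing that this simultaneous joint continuity of the entire family $\{\varphi_k\}_{k \geq 1}$ is precisely equivalent to items $(i)$ and $(ii)$.

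For lower semicontinuity on $\Acal_{\epsilon} \times X$, I fix $(t_0,x_0) \in \Acal_{\epsilon} \times X$, any $y \in \Fpazo(t_0,x_0)$ and $\delta >0$; choosing $y_k$ with $\dsf_Y(y,y_k) < \delta/3$ forces $\varphi_k(t_0,x_0) < \delta/3$, and continuity of $\varphi_k$ then yields $\varphi_k(t,x) < 2\delta/3$ on a neighbourhood of $(t_0,x_0)$ relative to $\Acal_{\epsilon} \times X$, which provides a point of $\Fpazo(t,x)$ inside $\B_Y(y,\delta)$. For the closedness of the graph, I take a sequence $(t_n,x_n,y_n) \to (t_0,x_0,y_0)$ in $\Acal_{\epsilon} \times X \times Y$ with $y_n \in \Fpazo(t_n,x_n)$, approximate $y_0$ by $y_{k}$, and use the triangular inequality $\varphi_k(t_n,x_n) \leq \dsf_Y(y_k,y_n)$ together with the continuity of $\varphi_k$ on $\Acal_{\epsilon} \times X$ to deduce $\varphi_k(t_0,x_0) \leq \dsf_Y(y_k,y_0)$; letting $y_k \to y_0$ gives $\dist_Y(y_0,\Fpazo(t_0,x_0)) = 0$, and the closedness of $\Fpazo(t_0,x_0)$ concludes the argument.

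The only genuinely delicate point is that the classical Scorza-Dragoni theorem is most familiar when the $x$-variable lies in a compact metric space, while here $X$ is merely complete and separable. This can be handled by exhausting $X$ by a countable family of closed balls $\B_X(x_0,n)$ and extracting the compact sets of times on each, then taking a further countable intersection -- an operation which, thanks to the summable loss budget $\epsilon/2^{k+n}$, preserves the measure estimate $\Lcal^1(I \setminus \Acal_{\epsilon})<\epsilon$ and extends continuity to all of $\Acal_{\epsilon} \times X$ since continuity on $X$ is a local property.
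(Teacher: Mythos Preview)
The paper does not actually prove this theorem; it is quoted as a known result from the literature (the reference given is \cite[Theorem 1]{Bonnano1989}). So there is no ``paper's own proof'' to compare against. That said, your reduction to the scalar Scorza--Dragoni theorem via the countable family of distance functions $\varphi_k(t,x)=\dist_Y(y_k,\Fpazo(t,x))$ is the standard route, and your derivations of items $(i)$ and $(ii)$ from the simultaneous joint continuity of the $\varphi_k$ on $\Acal_{\epsilon}\times X$ are correct and cleanly written.

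There is one small gap in your closing paragraph. You propose to reduce the case of a general Polish $X$ to the compact-$X$ version of Scorza--Dragoni by exhausting $X$ with closed balls $\B_X(x_0,n)$; but in a general complete separable metric space closed balls need not be compact (think of $X=\ell^2$), so this does not in fact land you in the compact setting. Fortunately the workaround is unnecessary: the scalar Scorza--Dragoni theorem already holds for Carath\'eodory maps $I\times X\to\R$ with $X$ merely separable metric. One way to see this is to apply Lusin's theorem to the measurable map $t\mapsto \varphi_k(t,\cdot)$ viewed as taking values in a separable metric space of continuous functions; alternatively, one fixes a countable dense set $\{x_j\}\subset X$, uses Lusin on each scalar map $t\mapsto \varphi_k(t,x_j)$, intersects, and then exploits the continuity of $\varphi_k(t,\cdot)$ in $x$ (which is available for a.e.\ $t$, hence on the Lusin set up to a further negligible removal) to pass from the dense set to all of $X$. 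Either way, you may simply invoke the separable-$X$ version directly and drop the last paragraph.
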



\paragraph*{Topological structures over the space of continuous functions.}

Throughout the coming paragraphs, we recall some useful topological properties of the space $C^0(\R^d,\R^d)$. In what follows, we let
\begin{equation*}
\dsf_{\sup}(v,w) := \sup_{x \in \R^d} |v(x) - w(x)| \in \R_+ \cup \{+\infty\}
\end{equation*}
be the supremum extended-distance between a pair of elements $v,w \in C^0(\R^d,\R^d)$. While this latter is useful to control the global discrepancy between two continuous functions -- which may be equal to $+\infty$ --, the topology that it induces is not separable and thus ill-adapted to the application of measurable selection theorems. For this reason, we will systematically endow the space $C^0(\R^d,\R^d)$ with the topology of \textit{local uniform convergence}, whose definition is recalled hereinbelow. 

\begin{Def}[Topology of local uniform convergence]
\label{def:dcc}
A sequence of maps $(v_n) \subset C^0(\R^d,\R^d)$ converges \textnormal{locally uniformly} -- or \textnormal{uniformly on compact sets} -- to some $v \in C^0(\R^d,\R^d)$ provided that
\begin{equation*}
\NormC{v - v_n}{0}{K,\R^d} ~\underset{n \to +\infty}{\longrightarrow}~ 0
\end{equation*}
for each compact set $K \subset \R^d$. This notion of convergence endows $C^0(\R^d,\R^d)$ with the structure of a separable Fr\'echet space, whose topology is induced by the translation invariant metric
\begin{equation}
\label{eq:dcc}
\dsf_{cc}(v,w) := \sum_{k=1}^{+\infty} 2^{-k} \min \Big\{ 1 \, , \, \NormC{v-w}{0}{B(0,k),\R^d} \Big\}
\end{equation}
that is well-defined and finite for any $v,w \in C^0(\R^d,\R^d)$.
\end{Def}

Amongst its interesting properties, the topology of local uniform convergence enjoys a very explicit and amenable characterisation of compactness, for which we refer e.g. to \cite[Chapter 7, Theorem 18]{Kelley1975}. 

\begin{thm}[Ascoli-Arzel\`a compactness criterion]
\label{thm:Ascoli}
A closed set $V \subset C^0(\R^d,\R^d)$ is compact for the topology induced by $\dsf_{cc}(\cdot,\cdot)$ \textnormal{if and only if} its elements are locally uniformly equicontinuous and if, for every $x \in \R^d$, there exists a compact set $K_x \subset \R^d$ such that $v(x) \in K_x$ for each $v \in V$.
\end{thm}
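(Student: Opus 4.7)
The plan is to reduce both implications to the classical Ascoli--Arzelà theorem applied on each closed ball $\overline{\B}_{\R^d}(0,k)$, and to transfer the local information back to the Fréchet metric $\dsf_{cc}$ defined in \eqref{eq:dcc}. I would split the proof into the necessity and sufficiency parts, using crucially that $(C^0(\R^d,\R^d),\dsf_{cc})$ is metrizable so that compactness is equivalent to sequential compactness.

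For the necessity direction, assume that $V$ is $\dsf_{cc}$-compact. Since convergence in $\dsf_{cc}$ obviously implies uniform convergence on every $\overline{\B}_{\R^d}(0,k)$ and in particular pointwise convergence, the evaluation map $\textnormal{ev}_x : v \in C^0(\R^d,\R^d) \mapsto v(x) \in \R^d$ is continuous for each fixed $x \in \R^d$. Hence $K_x := \textnormal{ev}_x(V) \subset \R^d$ is compact and contains $v(x)$ for every $v \in V$. For local equicontinuity, I would observe that the restriction map $\rho_k : (C^0(\R^d,\R^d),\dsf_{cc}) \to (C^0(\overline{\B}_{\R^d}(0,k),\R^d),\NormC{\cdot}{0}{\overline{\B}_{\R^d}(0,k),\R^d})$ is continuous, so that $\rho_k(V)$ is compact in a Banach space under the supremum norm. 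The classical Ascoli--Arzelà theorem on the compact domain $\overline{\B}_{\R^d}(0,k)$ then forces $\rho_k(V)$ to be equicontinuous, and since $k \geq 1$ is arbitrary, $V$ itself is locally uniformly equicontinuous.

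For the sufficiency direction, given a sequence $(v_n) \subset V$, I would proceed by a standard diagonal extraction. Local equicontinuity of $V$ on $\overline{\B}_{\R^d}(0,1)$ together with the pointwise containment $v_n(x) \in K_x$ implies -- via a compactness/equicontinuity argument -- that $\{v_n\}$ is uniformly bounded on $\overline{\B}_{\R^d}(0,1)$. Applying the classical Ascoli--Arzelà theorem on this compact domain yields a subsequence converging uniformly on $\overline{\B}_{\R^d}(0,1)$. Iterating on each $\overline{\B}_{\R^d}(0,k)$ and taking a diagonal subsequence $(v_{n_j})$ produces a map $v : \R^d \to \R^d$ which is the locally uniform limit of continuous functions, hence continuous. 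To conclude that $\dsf_{cc}(v_{n_j},v) \to 0$, I would split the series \eqref{eq:dcc} at a large index $K$: the tail is bounded by $\sum_{k > K} 2^{-k} = 2^{-K}$, which can be made arbitrarily small, while the finite head converges to zero by uniform convergence on each $\overline{\B}_{\R^d}(0,k)$. Finally, closedness of $V$ gives $v \in V$, so $V$ is sequentially compact, hence compact by metrizability.

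The only delicate step is verifying that pointwise relative compactness (through the family $\{K_x\}_{x \in \R^d}$) combined with local equicontinuity indeed entails uniform boundedness on each $\overline{\B}_{\R^d}(0,k)$, which is needed to legitimately invoke the classical finite-dimensional Ascoli--Arzelà theorem; but this follows by a standard covering argument on the compact ball using the equicontinuity modulus. No serious technical obstacle is expected, as this statement is primarily a translation of a textbook result to the Fréchet setting of $(C^0(\R^d,\R^d),\dsf_{cc})$.
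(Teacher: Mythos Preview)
Your proof is correct and follows the standard textbook approach. Note, however, that the paper does not actually prove this statement: it is recorded as a classical result with a reference to \cite[Chapter 7, Theorem 18]{Kelley1975}, so there is no ``paper's own proof'' to compare against. Your argument is precisely the kind of reduction-to-compact-balls plus diagonal extraction that one finds in such references, and all the steps you outline (continuity of evaluation and restriction maps, uniform boundedness on compact balls from equicontinuity plus pointwise compactness, the tail/head splitting of the series defining $\dsf_{cc}$) are sound.
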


We recall below a fact whose proof can be found in \cite{Papageorgiou1986}, which establishes a one-to-one correspondence between $\Lcal^1$-measurable maps $t \in I \mapsto v(t) \in C^0(\R^d,\R^d)$ and Carathéodory vector fields. We recall that a map $v : I \times \R^d \to \R^d$ is Carathéodory if $t \in I \mapsto v(t,x)$ is $\Lcal^1$-measurable for all $x \in \R^d$ and $x\in \R^d \mapsto v(t,x)$ is continuous for $\Lcal^1$-almost every $t \in [0,T]$. 

\begin{lem}[Measurable selections in $C^0(\R^d,\R^d)$ and Carathéodory vector fields]
\label{lem:Carathéodory}
A vector field $(t,x) \in I \times \R^d \mapsto v(t,x) \in \R^d$ is Carathéodory \textnormal{if and only if} its functional lift $t \in I \mapsto v(t) \in C^0(\R^d,\R^d)$ is $\Lcal^1$-measurable with respect to the topology induced by $\dsf_{cc}(\cdot,\cdot)$. 
\end{lem}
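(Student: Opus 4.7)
The plan is to establish the equivalence by treating the two implications separately. For the reverse direction $(\Leftarrow)$, I would start from the assumption that the functional lift $t \mapsto v(t)$ is $\Lcal^1$-measurable into $(C^0(\R^d,\R^d),\dsf_{cc})$. Continuity of $v(t,\cdot)$ on $\R^d$ for every $t \in I$ is then built into the very definition of the lift, so only the measurability of $t \mapsto v(t,x)$ for each fixed $x \in \R^d$ remains to be verified. This will follow by observing that the evaluation functional $\mathrm{ev}_x : w \in (C^0(\R^d,\R^d),\dsf_{cc}) \mapsto w(x) \in \R^d$ is continuous (since local uniform convergence trivially implies pointwise convergence), so that $t \mapsto v(t,x) = \mathrm{ev}_x(v(t))$ is measurable as the composition of a measurable map with a continuous one.

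For the forward direction $(\Rightarrow)$, the plan is to exploit the separability of $(C^0(\R^d,\R^d),\dsf_{cc})$ stated in Definition \ref{def:dcc}: measurability of $t \mapsto v(t)$ is then equivalent to the scalar measurability of the distance functions $t \mapsto \dsf_{cc}(v(t),w)$ for every $w$ in a countable dense subset of $C^0(\R^d,\R^d)$. In view of the series expression \eqref{eq:dcc}, I would reduce this further to proving the $\Lcal^1$-measurability of
\[
t \in I \mapsto \NormC{v(t) - w}{0}{\B(0,k),\R^d}
\]
for every integer $k \geq 1$ and every such $w$, since a uniformly convergent series of measurable scalar functions is itself measurable. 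The next step is to use the Carathéodory hypothesis: for $\Lcal^1$-almost every $t \in I$, the map $v(t,\cdot) - w$ is continuous on $\R^d$, whence the supremum over $\B(0,k)$ coincides with the countable supremum over $\B(0,k) \cap \mathbb{Q}^d$, namely
\[
\NormC{v(t) - w}{0}{\B(0,k),\R^d} = \sup_{x \in \B(0,k) \cap \mathbb{Q}^d} |v(t,x) - w(x)|.
\]
Since $t \mapsto |v(t,x) - w(x)|$ is $\Lcal^1$-measurable for each fixed $x$ by the Carathéodory assumption and since countable suprema preserve measurability, the required conclusion will follow, up to modification of the integrand on a negligible subset of times.

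The main (albeit minor) technical obstacle is precisely this reduction of an uncountable supremum to a countable one, which is the linchpin bridging the pointwise-in-$x$ measurability afforded by the Carathéodory hypothesis and the topological measurability in the Fréchet space $(C^0(\R^d,\R^d),\dsf_{cc})$. Without the continuity of $v(t,\cdot)$ available for $\Lcal^1$-almost every $t$, there would be no legitimate way to pass to such a countable description. All the remaining steps (separability reduction to a dense subset, composition with continuous evaluation maps, measurability of suprema and of uniformly convergent series) are standard manipulations that do not require further thought.
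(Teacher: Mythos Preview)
Your proposal is correct and entirely standard. The paper does not actually supply its own proof of this lemma; it simply refers the reader to \cite{Papageorgiou1986}. Your argument---continuity of the evaluation maps $\mathrm{ev}_x$ for the reverse direction, and the reduction of the $\dsf_{cc}$-distance to a countable supremum over $\B(0,k)\cap\mathbb{Q}^d$ via the almost-everywhere continuity of $v(t,\cdot)$ for the forward direction---is precisely the elementary route one would expect, and it fills in the cited result cleanly.
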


Lastly, we prove a technical result which states that for sequences of sublinear continuous functions, the convergence with respect to $\dsf_{cc}(\cdot,\cdot)$ implies the convergence in $\Lpazo^p(\R^d,\R^d;\mu)$ for every $p \in [1,+\infty)$ and each $\mu \in \Pcal_p(\R^d)$. 

\begin{lem}[Link between local uniform and Lebesgue convergences]
\label{lem:ConvdccWass}
Let $(v_n) \subset C^0(\R^d,\R^d)$ be a sequence of maps such that for some $m>0$, there holds
\begin{equation*}
|v_n(x)| \leq m(1+|x|)
\end{equation*}
for all $x \in \R^d$ and each $n \geq 1$. Moreover, suppose that $\dsf_{cc}(v_n,v) \to 0$ as $n \to +\infty$ for some $v \in C^0(\R^d,\R^d)$. Then, for each $p \in [1,+\infty)$ and $\mu \in \Pcal_p(\R^d)$, it holds that $\{v_n\}_{n=1}^{+\infty} \subset \Lpazo^p(\R^d,\R^d;\mu)$ and also
\begin{equation*}
\NormLp{v - v_n}{p}{\R^d,\R^d ; \, \mu} ~\underset{n \to +\infty}{\longrightarrow}~ 0. 
\end{equation*}
\end{lem}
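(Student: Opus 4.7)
The plan is to reduce the statement to a direct application of Lebesgue's dominated convergence theorem, with the sublinear growth bound providing a $\mu$-integrable majorant thanks to the finite $p$-th moment of $\mu$.

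First I would note that $\dsf_{cc}$-convergence implies locally uniform convergence on every ball $\B(0,k)$, and in particular pointwise convergence of $v_n$ to $v$ on the whole of $\R^d$. Passing to the limit in the bound $|v_n(x)| \leq m(1+|x|)$ then shows that $v$ itself satisfies $|v(x)| \leq m(1+|x|)$ for every $x \in \R^d$. Using the elementary convexity inequality $(1+|x|)^p \leq 2^{p-1}(1+|x|^p)$, and recalling that $\Mpazo_p(\mu)$ is finite, one obtains
\begin{equation*}
\INTDom{(1+|x|)^p}{\R^d}{\mu(x)} \, \leq \, 2^{p-1} \bigl( 1 + \Mpazo_p(\mu)^p \bigr) \, < \, +\infty.
\end{equation*}
Combined with the continuity of $v_n$ and $v$ (which ensures Borel measurability) and the sublinear bounds, this yields $\{v_n\}_{n=1}^{+\infty} \subset \Lpazo^p(\R^d,\R^d;\mu)$ as well as $v \in \Lpazo^p(\R^d,\R^d;\mu)$.

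Second, I would apply dominated convergence to the sequence of nonnegative functions $x \mapsto |v_n(x) - v(x)|^p$. Pointwise convergence of this sequence to $0$ on $\R^d$ is immediate from the first step. As for domination, the triangle inequality combined with the uniform growth bound gives
\begin{equation*}
|v_n(x) - v(x)|^p \, \leq \, \bigl( |v_n(x)| + |v(x)| \bigr)^p \, \leq \, (2m)^p (1+|x|)^p,
\end{equation*}
and the right-hand side is an $n$-independent, $\mu$-integrable majorant by the first step. Lebesgue's dominated convergence theorem therefore yields
\begin{equation*}
\NormLp{v - v_n}{p}{\R^d,\R^d ; \, \mu}^p = \INTDom{|v_n(x) - v(x)|^p}{\R^d}{\mu(x)} \, \underset{n \to +\infty}{\longrightarrow} \, 0,
\end{equation*}
which is the desired convergence. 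There is no real obstacle in this proof; the only point requiring care is noticing that the growth condition is essential, since $\dsf_{cc}$-convergence by itself does not control the integrals against $\mu$ outside of compact sets.
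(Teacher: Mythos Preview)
Your proof is correct. The argument via dominated convergence is sound: the sublinear growth bound passes to the limit $v$ by pointwise convergence, giving the uniform majorant $(2m)^p(1+|x|)^p$, which is $\mu$-integrable since $\mu \in \Pcal_p(\R^d)$.

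The paper's proof uses the same core ingredient (the integrable envelope $(1+|x|)^p$) but unwinds the argument by hand rather than invoking dominated convergence: it fixes $\epsilon>0$, chooses a radius $R_\epsilon$ so that the tail integral $\int_{\{|x|\geq R_\epsilon\}}(1+|x|)^p\,\dn\mu$ is small, and then splits $\NormLp{v-v_n}{p}{\R^d,\R^d;\,\mu}$ into a contribution on $B(0,R_\epsilon)$ (controlled by the local uniform convergence) and a tail contribution (controlled by the growth bound). Your route is more concise; the paper's explicit splitting has the minor advantage of making the tail-integrability mechanism visible, which echoes how similar truncation arguments are used elsewhere in the manuscript.
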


\begin{proof}
The fact that $v_n \in \Lpazo^p(\R^d,\R^d;\mu)$ for each $n \geq 1$ simply follows from the observation that
\begin{equation*}
\NormLp{v_n}{p}{\R^d,\R^d ; \; \mu} \leq m \bigg( \INTDom{(1+|x|)^p}{\R^d}{\mu(x)} \bigg)^{1/p} < +\infty,
\end{equation*}
and likewise $v \in \Lpazo^p(\R^d,\R^d;\mu)$. Fix now an arbitrary $\epsilon > 0$ and consider some $R_{\epsilon} > 0$ for which
\begin{equation*}
\bigg( \INTDom{(1+|x|)^p}{\{ x \in \R^d ~\textnormal{s.t.} \, |x| \geq R_{\epsilon} \}}{\mu(x)} \bigg)^{1/p} \leq \frac{\epsilon}{4(1+m)}. 
\end{equation*}
Then, choose an integer $N_{\epsilon} \geq 1$ such that 
\begin{equation*}
\NormC{v - v_n}{0}{B(0,R_{\epsilon}),\R^d} \leq \frac{\epsilon}{2}
\end{equation*}
for each $n \geq N_{\epsilon}$, which is always possible by the definition \eqref{eq:dcc} of $\dsf_{cc}(\cdot,\cdot)$. Whence, it follows that
\begin{equation*}
\begin{aligned}
\NormLp{v - v_n}{p}{\R^d,\R^d ; \; \mu} & \leq \NormC{v - v_n}{0}{B(0,R_{\epsilon}),\R^d} + 2m \bigg( \INTDom{(1+|x|)^p}{\{ x \in \R^d ~\textnormal{s.t.} \, |x| \geq R_{\epsilon} \}}{\mu(x)} \bigg)^{1/p} \leq \epsilon
\end{aligned}
\end{equation*}
for each $n \geq N_{\epsilon}$, which yields the desired convergence result. 
\end{proof}


\subsection{Continuity equations and inclusions in Wasserstein spaces}

In this last preliminary section, we expose well-posedness results and estimates for solutions of continuity equations and inclusions in Wasserstein spaces. These latter are mostly borrowed from our previous works \cite{ContInc,ContIncPp}, but we also point the reader to \cite[Chapter 8]{AGS} and \cite{AmbrosioC2014,Pedestrian} for more standard versions thereof.

\paragraph*{Continuity equations in the Carathéodory framework.}

In the ensuing paragraphs, we recall some elementary results pertaining to the qualitative properties of \textit{continuity equations} of the form 
\begin{equation*}
\partial_t \mu(t) + \Div_x(v(t)\mu(t)) = 0,
\end{equation*}
defined over some time interval $[0,T]$ with $T > 0$, whose solutions are understood in the sense of distributions, namely, satisfying
\begin{equation*}
\INTSeg{\INTDom{\Big( \partial_t \varphi(t,x) + \big\langle \nabla_x \varphi(t,x) , v(t,x) \big\rangle \Big)}{\R^d}{\mu(t)(x)}}{t}{0}{T} = 0
\end{equation*}
against smooth test functions $\varphi \in C^{\infty}_c((0,T) \times \R^d,\R)$. In this context, given a real number $p \in [1,+\infty)$ and a pair of elements $(\tau,\mu_{\tau}) \in [0,T] \times \Pcal_p(\R^d)$, we will study the well-posedness of the Cauchy problem
\begin{equation}
\label{eq:ContEqCauchy}
\left\{
\begin{aligned}
& \partial_t \mu(t) + \Div_x(v(t)\mu(t)) = 0, \\
& \mu(\tau) = \mu_{\tau},
\end{aligned}
\right.
\end{equation}
in the case where the velocity field $v : [0,T] \times \R^d \to \R^d$ satisfies either the following standard Carathéodory assumptions, or some of their variants.

\begin{taggedhyp}{\textbn{(CE)}} \hfill
\label{hyp:CE}
\begin{enumerate}
\item[$(i)$] The velocity field $v : [0,T] \times \R^d \to \R^d$ is Carathéodory, i.e. $t \in [0,T] \mapsto v(t,x)$ is $\Lcal^1$-measurable for all $x \in \R^d$ while $x \in \R^d \mapsto v(t,x)$ is continuous for $\Lcal^1$-almost every $t \in [0,T]$. Moreover, there exists a map $m(\cdot) \in L^1([0,T],\R_+)$ such that 
\begin{equation*}
|v(t,x)| \leq m(t)(1+|x|)
\end{equation*}
for $\Lcal^1$-almost every $t \in [0,T]$ and all $x \in \R^d$.
\item[$(ii)$] There exists a map $l(\cdot) \in L^1([0,T],\R_+)$ such that 
\begin{equation*}
\Lip(v(t) \, ; \R^d) \leq l(t)
\end{equation*}
for $\Lcal^1$-almost every $t \in [0,T]$. 
\end{enumerate}
\end{taggedhyp}

In their strongest form, the well-posedness results stated in Theorem \ref{thm:CE} below for continuity equations involve the notion of \textit{characteristic flows} generated by a velocity field.

\begin{Def}[Characteristic flow]
\label{def:Flows}
Given a velocity field $v : [0,T] \times \R^d \to \R^d$ satisfying Hypotheses \ref{hyp:CE}, we define the \textnormal{characteristic flows} $(\Phi_{(\tau,t)}^v)_{\tau,t \in [0,T]} \subset C^0(\R^d,\R^d)$ as the unique collection of maps satisfying
\begin{equation}
\label{eq:FlowDef}
\Phi_{(\tau,t)}^v(x) = x + \INTSeg{v \big( s , \Phi_{(\tau,s)}^v(x) \big)}{s}{\tau}{t}
\end{equation}
for all times $\tau,t \in [0,T]$ and any $x \in \R^d$.
\end{Def}

\begin{thm}[Well-posedness in the Carathéodory framework]
\label{thm:CE}
Let $v : [0,T] \times \R^d \to \R^d$ be a velocity field satisfying Hypothesis \ref{hyp:CE}-$(i)$, and fix some $(\tau,\mu_{\tau}) \in [0,T] \times \R^d$.

Then, the Cauchy problem \eqref{eq:ContEqCauchy} admits solutions $\mu(\cdot) \in \AC([\tau,T],\Pcal_p(\R^d))$. In the case where Hypothesis \ref{hyp:CE}-$(ii)$ holds as well, the latter is then uniquely defined on the whole interval $[0,T]$, and represented explicitly by the formula
\begin{equation*}
\mu(t) = \Phi^{v}_{(\tau,t) \, \sharp} \mu_{\tau}
\end{equation*}
for all times $t \in [0,T]$. 
\end{thm}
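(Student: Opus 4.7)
The plan is to first establish well-posedness under the full Cauchy-Lipschitz assumptions \ref{hyp:CE}-$(i)$-$(ii)$, where the explicit representation via the characteristic flow is available, and then to recover existence under only \ref{hyp:CE}-$(i)$ via a mollification and passage-to-the-limit argument. Under \ref{hyp:CE}-$(ii)$, I would begin by invoking classical Carathéodory ODE theory to solve, for every $x \in \R^d$, the integral equation \eqref{eq:FlowDef} uniquely on $[0,T]$; the Lipschitz bound $l(\cdot) \in L^1([0,T],\R_+)$ combined with Gronwall's lemma yields local Lipschitz dependence of $\Phi^v_{(\tau,t)}$ on $x$, so that $\Phi^v_{(\tau,t)} \in C^0(\R^d,\R^d)$, while the linear growth from \ref{hyp:CE}-$(i)$ and Gronwall furnish $|\Phi^v_{(\tau,t)}(x)| \leq C(1+|x|)$ uniformly in $t$. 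Hence $\mu(t) := \Phi^v_{(\tau,t)\sharp}\mu_\tau \in \Pcal_p(\R^d)$; testing against $\varphi \in C^\infty_c((0,T)\times\R^d,\R)$ and differentiating under the integral by the chain rule along characteristics yields the distributional form of the continuity equation, while absolute continuity in the $W_p$-metric follows from \eqref{eq:WassEst} and the integral representation of $\Phi^v_{(\tau,t)} - \Phi^v_{(\tau,s)}$. For uniqueness, I would invoke the superposition principle of Ambrosio to represent any candidate solution $\tilde\mu(\cdot)$ as $\tilde\mu(t) = (e_t)_\sharp \eta$ for some probability measure $\eta$ on $\AC([\tau,T],\R^d)$ concentrated on integral curves of $v$; the Lipschitz bound then forces each such curve emanating from $x$ to coincide with $\Phi^v_{(\tau,\cdot)}(x)$, yielding $\tilde\mu \equiv \mu$.

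To obtain existence under \ref{hyp:CE}-$(i)$ alone, I would mollify $v$ in the spatial variable: each $v_n(t,\cdot) := v(t,\cdot) * \rho_n$ remains Carathéodory, becomes Lipschitz in $x$ (with an $n$-dependent constant), and inherits the same linear growth $|v_n(t,x)| \leq m(t)(1+|x|)$. The Lipschitz case then produces approximating solutions $\mu_n(\cdot) \in \AC([\tau,T],\Pcal_p(\R^d))$ starting from $\mu_\tau$, with uniform moment bounds $\Mpazo_p(\mu_n(t)) \leq C$ obtained by applying Gronwall to $t \mapsto \Mpazo_p^p(\mu_n(t))$, and an equicontinuity estimate of the form $W_p(\mu_n(t),\mu_n(s)) \leq C \int_s^t m(r)\, dr$ coming from \eqref{eq:WassEst} combined with the integral representation of $\Phi^{v_n}_{(\tau,t)} - \Phi^{v_n}_{(\tau,s)}$. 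Together with the $p$-uniform integrability criterion of Proposition \ref{prop:Wass}, these bounds permit an Ascoli-Arzelà type argument yielding a subsequence converging in $C^0([\tau,T],\Pcal_p(\R^d))$ to a limit $\mu(\cdot) \in \AC([\tau,T],\Pcal_p(\R^d))$.

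The main obstacle is passing to the limit in the distributional formulation, i.e. showing that
\[
\INTSeg{\INTDom{\bigl\langle \nabla_x \varphi(t,x), v_n(t,x) \bigr\rangle}{\R^d}{\mu_n(t)(x)}}{t}{\tau}{T} \longrightarrow \INTSeg{\INTDom{\bigl\langle \nabla_x \varphi(t,x), v(t,x) \bigr\rangle}{\R^d}{\mu(t)(x)}}{t}{\tau}{T}
\]
for every $\varphi \in C^\infty_c((\tau,T) \times \R^d,\R)$. This requires coupling the local uniform convergence $v_n(t,\cdot) \to v(t,\cdot)$ (which holds for $\Lcal^1$-a.e. $t$ on compact subsets of $\R^d$) with the $W_p$-convergence of $\mu_n(t)$ to $\mu(t)$; here Lemma \ref{lem:ConvdccWass} provides the crucial bridge between the local uniform topology on $C^0(\R^d,\R^d)$ and the $\Lpazo^p(\mu)$-convergence needed to identify the limit of the inner integral at each $t$, while the uniform $p$-moment bound on $\mu_n(t)$ together with the integrable envelope $m(\cdot)$ supply the dominated-convergence control in time.
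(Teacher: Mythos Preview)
The paper does not actually supply a proof of this theorem: it is stated in the preliminaries as a known result borrowed from the literature, with references to \cite{ContInc,ContIncPp}, \cite[Chapter 8]{AGS} and \cite{AmbrosioC2014,Pedestrian}. So there is no in-paper argument to compare against.

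Your proposal is the standard route found in those references and is correct. Two small remarks. First, to apply the compactness criterion of Proposition~\ref{prop:Wass} you need $p$-uniform integrability of $(\mu_n(t))_{n,t}$, which does not follow from a uniform $p$-moment bound alone; however, the flow estimate $|\Phi^{v_n}_{(\tau,t)}(x)| \leq C(1+|x|)$ together with the change of variables yields
\[
\INTDom{|y|^p}{\{|y|\geq R\}}{\mu_n(t)(y)} \leq C^p \INTDom{(1+|x|)^p}{\{|x|\geq R/C-1\}}{\mu_\tau(x)},
\]
which is exactly the mechanism behind \eqref{eq:UnifIntegCI} and gives the required uniformity since $\mu_\tau \in \Pcal_p(\R^d)$ is fixed. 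Second, invoking Lemma~\ref{lem:ConvdccWass} for the limit passage is slightly more than you need: since each test function $\varphi$ is compactly supported in $x$, uniform convergence of $v_n(t,\cdot)$ to $v(t,\cdot)$ on $\supp(\varphi)$ combined with narrow convergence of $\mu_n(t)$ to $\mu(t)$ already suffices, via the elementary splitting $\int f_n \,\dn\mu_n - \int f\,\dn\mu = \int (f_n-f)\,\dn\mu_n + \int f\,\dn(\mu_n-\mu)$.
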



\paragraph*{Set-valued dynamics in Wasserstein spaces.} In the next few  paragraphs, we recollect for the sake of completeness the definition of \textit{continuity inclusions} introduced in our earlier works \cite{ContInc,ContIncPp}, along with several estimates on which our main contributions will strongly rely. In what follows, given some $p \in [1,+\infty)$, we focus on the set-valued Cauchy problems of the form
\begin{equation}
\label{eq:ContIncCauchy}
\left\{
\begin{aligned}
& \partial_t \mu(t) \in - \Div_x \Big( V(t,\mu(t)) \mu(t) \Big), \\
& \mu(\tau) = \mu_{\tau},  
\end{aligned}
\right.
\end{equation}
wherein $(\tau,\mu_{\tau}) \in [0,T] \times \Pcal_p(\R^d)$ and $V : [0,T] \times \Pcal_p(\R^d) \tto C^0(\R^d,\R^d)$ are given, and whose solutions are understood in the following sense.

\begin{Def}[Solutions to continuity inclusions]
\label{def:CI}
A curve of measures $\mu(\cdot) \in \AC([0,T],\Pcal_p(\R^d))$ is said to be a solution of the Cauchy problem \eqref{eq:ContIncCauchy} if there exists an $\Lcal^1$-measurable selection $t \in [0,T] \mapsto v(t) \in V(t,\mu(t))$ such that the \textnormal{trajectory-selection pair} $(\mu(\cdot),v(\cdot))$ satisfies
\begin{equation*}
\left\{
\begin{aligned}
& \partial_t \mu(t) + \Div_x(v(t)\mu(t)) = 0, \\
& \mu(\tau) = \mu_{\tau},
\end{aligned}
\right.
\end{equation*}
in the sense of distributions. 
\end{Def}

Based on our earlier contributions, we will assume throughout this article that the dynamics satisfies the following assumptions. Therein and in what follows, $C^0(\R^d,\R^d)$ is systematically endowed with the separable Fr\'echet structure induced by $\dsf_{cc}(\cdot,\cdot)$, exposed in Definition \ref{def:dcc}. 

\begin{taggedhyp}{\textbn{(CI)}} 
\label{hyp:CI} \hfill
\begin{enumerate}
\item[$(i)$] For any $\mu \in \Pcal_p(\R^d)$, the set-valued map $t \in [0,T] \tto V(t,\mu) \subset C^0(\R^d,\R^d)$ is $\Lcal^1$-measurable with closed nonempty images.
\item[$(ii)$] There exists a map $m(\cdot) \in L^1([0,T],\R_+)$ such that for $\Lcal^1$-almost every $t \in [0,T]$, any $\mu \in \Pcal_p(\R^d)$, every $v \in V(t,\mu)$ and all $x \in \R^d$, it holds 
\begin{equation*}
|v(x)| \leq m(t) \Big(1 + |x| + \Mpazo_p(\mu) \Big).
\end{equation*}
\item[$(iii)$] There exists a map $l(\cdot) \in L^1([0,T],\R_+)$ such that for $\Lcal^1$-almost every $t \in [0,T]$, any $\mu \in \Pcal_p(\R^d)$ and every $v \in V(t,\mu)$, it holds 
\begin{equation*}
\Lip(v \, ; \R^d) \leq l(t). 
\end{equation*} 
\item[$(iv)$] There exists a map $L(\cdot) \in L^1([0,T],\R_+)$ such that for $\Lcal^1$-almost every $t \in [0,T]$, any $\mu,\nu \in \Pcal_p(\R^d)$ and each $v \in V(t,\mu)$, there exists an element $w \in V(t,\nu)$ for which
\begin{equation*}
\dsf_{\sup}(v,w) \leq L(t) W_p(\mu,\nu).
\end{equation*}
\end{enumerate}
\end{taggedhyp}

Examples of classical set-valued mappings defined in terms of control systems satisfying localised variants of \ref{hyp:CI} are provided in \cite[Section 4]{ContInc}. In our subsequent developments, we will frequently refer to solutions of \eqref{eq:ContIncCauchy} by using the terminology of \textit{reachable} and \textit{solutions sets}, defined as follows.

\begin{Def}[Reachable and solution sets of continuity inclusions]
Given a pair of elements $(\tau,\mu_{\tau}) \in [0,T] \times \Pcal_p(\R^d)$, we define the (forward) \textnormal{solution set} of the Cauchy problem \eqref{eq:ContIncCauchy} as 
\begin{equation*}
\Spazo_{[\tau,T]}(\tau,\mu_{\tau}) := \bigg\{ \mu(\cdot) \in \AC([\tau,T],\Pcal_p(\R^d)) ~\, \textnormal{s.t.}~ \mu(\cdot) ~\text{is a solution of \eqref{eq:ContIncCauchy}} \bigg\},
\end{equation*}
and denote by
\begin{equation*}
\Rpazo_{(\tau,t)}(\mu_{\tau}) := \Big\{ \mu(t) ~\,\textnormal{s.t.}~ \mu(\cdot) \in \Spazo_{[\tau,T]}(\tau,\mu_{\tau}) \Big\}
\end{equation*}
the corresponding \textnormal{reachable sets} at time $t \in [\tau,T]$.
\end{Def}

By combining classical concatenation results for solutions of continuity equations (see e.g. \cite[Lemma 4.4]{Dolbeault2009}) and Definition \ref{def:CI}, it can be shown that the reachable sets satisfy the semigroup property
\begin{equation}
\label{eq:SemigroupReach}
\Rpazo_{(\tau,t)}(\mu_{\tau}) = \Rpazo_{(s,t)} \circ \Rpazo_{(\tau,s)}(\mu_{\tau}) 
\end{equation}
for all times $\tau \leq s \leq t \leq T$. Besides, it follows from Hypotheses \ref{hyp:CI} and Theorem \ref{thm:CE} that solution curves are also well-defined and unique backward in time. Hence, each element of $\Spazo_{[\tau,T]}(\tau,\mu_{\tau})$ can be seen as a restriction to $[\tau,T]$ of some curve in $\Spazo_{[0,T]}(\tau,\mu_{\tau})$  

In the next propositions, we recall several a priori estimates for solutions of \eqref{eq:ContIncCauchy}, along with some useful topological properties for the reachable and solution sets. Therein and in what follows, given a map $m(\cdot) \in L^1([0,T],\R_+)$, we will frequently use the shorthand notation $\Norm{m(\cdot)}_1 := \NormL{m(\cdot)}{1}{[0,T],\R}$.

\begin{prop}[Moment, equi-integrability and absolute continuity estimates] 
\label{prop:MomentumCI}
Let $V : [0,T] \times \Pcal_p(\R^d) \tto C^0(\R^d,\R^d)$ be a set-valued map satisfying Hypotheses \ref{hyp:CI} and $(\tau,\mu_{\tau}) \in [0,T] \times \Pcal_p(\R^d)$. 

Then, there exists a constant $\Cpazo_T > 0$ which only depends on the magnitudes of $p,\Mpazo_p(\mu_{\tau})$ and $\Norm{m(\cdot)}_1$ such that every curve $\mu(\cdot) \in \Spazo_{[0,T]}(\tau,\mu_{\tau})$ complies with the a priori moment bound
\begin{equation}
\label{eq:MomentumCI}
\Mpazo_p(\mu(t)) \leq \Cpazo_T, 
\end{equation}
as well as the uniform equi-integrability estimate
\begin{equation}
\label{eq:UnifIntegCI}
\INTDom{|x|^p}{\{x \in \R^d ~\textnormal{s.t.} \; |x| \geq R \}}{\mu(t)(x)} \leq \Cpazo_T^p \INTDom{(1+|x|)^p}{\{x \in \R^d ~\textnormal{s.t.} \; |x| \geq R/\Cpazo_T -1 \}}{\mu_{\tau}(x)}
\end{equation}
for all times $t \in [0,T]$ and each $R>0$. Moreover, the following uniform absolute continuity inequality 
\begin{equation}
\label{eq:ACCI}
W_p(\mu(t_1),\mu(t_2)) \leq (1+\Cpazo_T) \INTSeg{m(s)}{s}{t_1}{t_2}
\end{equation}
holds for all times $0 \leq t_1 \leq t_2 \leq T$ and every $\mu(\cdot) \in \Spazo_{[0,T]}(\tau,\mu_{\tau})$. 
\end{prop}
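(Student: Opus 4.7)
The plan is to fix a trajectory $\mu(\cdot) \in \Spazo_{[0,T]}(\tau,\mu_{\tau})$ together with an $\Lcal^1$-measurable selection $t \mapsto v(t) \in V(t,\mu(t))$ provided by Definition \ref{def:CI}. By Hypotheses \ref{hyp:CI}-$(ii)$ and \ref{hyp:CI}-$(iii)$, the vector field $v(\cdot,\cdot)$ is Carathéodory with the sublinear bound $|v(t,x)| \leq m(t)(1+|x|+\Mpazo_p(\mu(t)))$ and a uniformly integrable Lipschitz bound, so Theorem \ref{thm:CE} applies and furnishes the flow representation $\mu(t) = \Phi^{v}_{(\tau,t)\,\sharp}\mu_{\tau}$, turning every item of the proposition into a statement about the characteristic flow.

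For \eqref{eq:MomentumCI}, I would inject the sublinearity bound into the flow identity \eqref{eq:FlowDef} to obtain, for each fixed $x$, an integral inequality on $|\Phi^v_{(\tau,t)}(x)|$. A first application of Gronwall's lemma with $x$ frozen produces
$$|\Phi^v_{(\tau,t)}(x)| \leq e^{\Norm{m(\cdot)}_1} \Big( |x| + \INTSeg{m(s)(1+\Mpazo_p(\mu(s)))}{s}{\tau}{t} \Big).$$
Raising this to the $p$-th power, integrating against $\mu_{\tau}$ and using Minkowski's inequality then yields a Volterra-type inequality for $t \mapsto \Mpazo_p(\mu(t))$ whose kernel is $m(\cdot)$. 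A second application of the integral form of Gronwall's lemma closes the loop and produces the constant $\Cpazo_T$ depending only on $p$, $\Mpazo_p(\mu_{\tau})$ and $\Norm{m(\cdot)}_1$.

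Once \eqref{eq:MomentumCI} is secured, the pointwise flow bound simplifies (after absorbing the moment term into $\Cpazo_T$) to $|\Phi^v_{(\tau,t)}(y)| \leq \Cpazo_T(1+|y|)$. For \eqref{eq:UnifIntegCI}, I would apply the image-measure identity to write
$$\INTDom{|x|^p}{\{|x| \geq R\}}{\mu(t)(x)} = \INTDom{|\Phi^v_{(\tau,t)}(y)|^p}{\{|\Phi^v_{(\tau,t)}(y)| \geq R\}}{\mu_{\tau}(y)},$$
observe that the event $\{|\Phi^v_{(\tau,t)}(y)| \geq R\}$ is contained in $\{|y| \geq R/\Cpazo_T - 1\}$ by the flow bound, and conclude by dominating the integrand by $\Cpazo_T^p(1+|y|)^p$. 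For \eqref{eq:ACCI}, I would use the semigroup property \eqref{eq:SemigroupReach} to write $\mu(t_2) = \Phi^v_{(t_1,t_2)\,\sharp}\mu(t_1)$, invoke the push-forward estimate \eqref{eq:WassEst}, and bound $|\Phi^v_{(t_1,t_2)}(x) - x|$ pointwise via \eqref{eq:FlowDef} and Hypothesis \ref{hyp:CI}-$(ii)$; using \eqref{eq:MomentumCI} to control both $\Mpazo_p(\mu(s))$ and $|\Phi^v_{(t_1,s)}(x)|$ leaves an estimate proportional to $(1+|x|)\int_{t_1}^{t_2} m(s)\,ds$, and integrating against $\mu(t_1)$ with one further appeal to \eqref{eq:MomentumCI} yields the claimed bound (up to relabelling the constant $\Cpazo_T$).

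The only real technical nuisance I anticipate is the self-referential appearance of $\Mpazo_p(\mu(t))$ inside the sublinearity condition \ref{hyp:CI}-$(ii)$, which forces the nested double-Gronwall argument in the proof of \eqref{eq:MomentumCI}; once that moment bound is in place, both \eqref{eq:UnifIntegCI} and \eqref{eq:ACCI} reduce to bookkeeping around the push-forward inequality \eqref{eq:WassEst} and the flow estimate \eqref{eq:FlowDef}.
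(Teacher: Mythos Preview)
The paper does not actually prove this proposition: it is stated as a recollection of known a priori estimates, with the surrounding text explicitly saying ``these latter are mostly borrowed from our previous works \cite{ContInc,ContIncPp}''. So there is no in-paper proof to compare against.

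That said, your proposal is a correct and standard route to these estimates, and is almost certainly what the cited references do. A couple of minor remarks. First, invoking Theorem \ref{thm:CE} for the flow representation requires Hypotheses \ref{hyp:CE}, whose sublinearity bound must be of the form $|v(t,x)| \leq \tilde m(t)(1+|x|)$ with $\tilde m \in L^1$; since $\mu(\cdot) \in \AC([0,T],\Pcal_p(\R^d))$ is given, $t \mapsto \Mpazo_p(\mu(t))$ is continuous hence bounded on $[0,T]$, so you may take $\tilde m(t) := m(t)(1 + \sup_s \Mpazo_p(\mu(s)))$ to justify the flow representation before running the Gronwall argument that makes the bound uniform in the solution. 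You implicitly rely on this but it is worth making explicit to avoid the appearance of circularity. Second, your derivation of \eqref{eq:ACCI} via $\mu(t_2) = \Phi^v_{(t_1,t_2)\,\sharp}\mu(t_1)$, the estimate \eqref{eq:WassEst}, and the pointwise bound $|\Phi^v_{(t_1,t_2)}(x)-x| \leq (1+\Cpazo_T)(1+|x|)\int_{t_1}^{t_2} m(s)\,\dn s$ is fine, but note that after integrating against $\mu(t_1)$ and using $\Mpazo_p(\mu(t_1)) \leq \Cpazo_T$ you obtain a constant of the form $(1+\Cpazo_T)^2$ or similar; getting exactly $(1+\Cpazo_T)$ as stated requires either a slightly sharper pointwise bound or a relabelling of $\Cpazo_T$, which you already anticipate.
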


\begin{prop}[Topological properties of the reachable and solution sets]
\label{prop:Topological}
Assume that the hypotheses of Proposition \ref{prop:MomentumCI} hold and that $V : [0,T] \times \Pcal_p(\R^d) \tto C^0(\R^d,\R^d)$ has convex images. Then the reachable sets $\Rpazo_{(\tau,t)}(\mu_{\tau}) \subset \Pcal_p(\R^d)$ are compact for all times $t \in [0,T]$, and the solution set $\Spazo_{[0,T]}(\tau,\mu_{\tau}) \subset C^0([0,T],\Pcal_p(\R^d))$ is compact for the topology of uniform convergence.
\end{prop}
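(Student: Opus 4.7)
The plan is to prove the stronger second statement, from which the first follows immediately: for each $t \in [0,T]$ the evaluation map $\mu(\cdot) \mapsto \mu(t)$ is continuous from $C^0([0,T], \Pcal_p(\R^d))$ into $(\Pcal_p(\R^d), W_p)$, so compactness of $\Spazo_{[0,T]}(\tau,\mu_\tau)$ yields compactness of $\Rpazo_{(\tau,t)}(\mu_\tau)$ as its image. I would treat $\Spazo_{[0,T]}(\tau,\mu_\tau)$ in two stages, first establishing relative compactness and then closedness; the convexity assumption on $V$ will only enter the second stage.

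For relative compactness, I would combine the three estimates from Proposition \ref{prop:MomentumCI}. The moment bound \eqref{eq:MomentumCI} together with the equi-integrability estimate \eqref{eq:UnifIntegCI} show that $\{\mu(t) : \mu(\cdot) \in \Spazo_{[0,T]}(\tau,\mu_\tau)\}$ is $p$-uniformly integrable in a fashion uniform with respect to $t$, hence relatively compact in $(\Pcal_p(\R^d), W_p)$ by Proposition \ref{prop:Wass}. The uniform absolute continuity inequality \eqref{eq:ACCI} then supplies a common modulus of continuity in time for all elements of $\Spazo_{[0,T]}(\tau,\mu_\tau)$. The general Arzelà-Ascoli theorem for curves valued in a complete metric space concludes this step.

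For closedness, let $(\mu_n(\cdot)) \subset \Spazo_{[0,T]}(\tau,\mu_\tau)$ converge uniformly to some $\mu(\cdot) \in C^0([0,T], \Pcal_p(\R^d))$, and let $v_n(\cdot)$ be an associated measurable selection with $v_n(t) \in V(t, \mu_n(t))$ satisfying the continuity equation. Hypothesis \ref{hyp:CI}-(iv) combined with the measurable selection Theorem \ref{thm:MeasurableSel}, applied to the correspondence $t \tto \{w \in V(t,\mu(t)) : \dsf_{\sup}(w, v_n(t)) \leq L(t) W_p(\mu_n(t), \mu(t))\}$, produces a measurable $\tilde v_n(\cdot)$ with $\tilde v_n(t) \in V(t,\mu(t))$ and $\dsf_{\sup}(v_n(t), \tilde v_n(t)) \to 0$ for $\Lcal^1$-a.e. $t$. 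Thanks to Hypotheses \ref{hyp:CI}-(ii),(iii) together with the a priori bound \eqref{eq:MomentumCI}, the family $\{\tilde v_n(t)\}_n$ lies for each such $t$ in a common $\dsf_{cc}$-compact subset of $C^0(\R^d,\R^d)$ supplied by Theorem \ref{thm:Ascoli}.

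The crux of the argument, and where I anticipate the main difficulty, is to extract from $(\tilde v_n)$ a measurable selection $v(\cdot)$ with $v(t) \in V(t,\mu(t))$ almost everywhere, in such a way that one may pass to the limit in the continuity equation; this is precisely where closedness and convexity of the images $V(t,\mu(t))$ become decisive. I would apply a Mazur-type argument, viewing $(\tilde v_n)$ as bounded in $L^1([0,T], C^0(\R^d,\R^d))$ with values in the closed convex sets $V(t,\mu(t))$, to produce a measurable limit $v(\cdot)$ whose values remain in $V(t,\mu(t))$ for a.e. $t$. Finally, Lemma \ref{lem:ConvdccWass} (whose applicability is ensured by the uniform sublinear growth in Hypothesis \ref{hyp:CI}-(ii)) converts the $\dsf_{cc}$-convergence along suitable convex combinations into $\Lpazo^p$-convergence against $\mu(t)$; combining this with the uniform convergence $\mu_n \to \mu$, I would pass to the limit in the distributional identity $\partial_t \mu_n + \Div_x(v_n(t) \mu_n(t)) = 0$ to conclude that $(\mu(\cdot), v(\cdot))$ is a trajectory-selection pair for \eqref{eq:ContIncCauchy}, whence $\mu(\cdot) \in \Spazo_{[0,T]}(\tau,\mu_\tau)$.
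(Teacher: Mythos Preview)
The paper's own proof is simply a citation: it invokes \cite[Theorem 3.5]{ContIncPp} for compactness of the solution set, and then observes (exactly as you do) that compactness of the reachable sets follows via the continuous evaluation map $\mu(\cdot) \mapsto \mu(t)$. Your proposal, by contrast, sketches what a self-contained proof of that cited result would look like: relative compactness via the a priori estimates of Proposition~\ref{prop:MomentumCI} combined with Arzel\`a--Ascoli, and closedness via a weak-limit/Mazur argument on the velocity selections, using convexity of $V(t,\mu(t))$ to ensure the limit remains admissible. This is indeed the architecture of the proof in the cited reference.

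One technical point deserves care in your Mazur step, which you correctly flag as the crux. The space $(C^0(\R^d,\R^d),\dsf_{cc})$ is only Fr\'echet, so speaking of ``bounded in $L^1([0,T],C^0(\R^d,\R^d))$'' and invoking weak compactness there is not quite right as stated. The standard fix is to restrict to compact balls $B(0,R)$, where $C^0(B(0,R),\R^d)$ is Banach and the selections are genuinely $L^1$-bounded by Hypothesis~\ref{hyp:CI}-$(ii)$ and \eqref{eq:MomentumCI}; one extracts weak-$L^1$ limits there, takes Mazur combinations, and runs a diagonal argument over $R \to \infty$. Convexity and $\dsf_{cc}$-compactness of $V(t,\mu(t))$ then ensure the resulting selection remains admissible, and the passage to the limit in the distributional continuity equation proceeds as you indicate.
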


\begin{proof}
The fact that the solution set $\Spazo_{[0,T]}(\tau,\mu_{\tau}) \subset C^0([\tau,T],\Pcal_p(\R^d))$ is compact when $V : [0,T] \times \Pcal_p(\R^d) \tto C^0(\R^d,\R^d)$ has convex images was proven in \cite[Theorem 3.5]{ContIncPp}. It is then straightforward to show that the underlying reachable sets are compact for all times $t \in [0,T]$.
\end{proof}

We end this preliminary section by recalling a simplified and condensed version of one of the main results of \cite{ContIncPp}, which combines an existence result for \eqref{eq:ContIncCauchy} together with a powerful estimate ``à la Gr\"onwall'' involving the distance to an a priori given curve of measures. 

\begin{thm}[Local Filippov estimates for continuity inclusions]
\label{thm:LocalFilippov}
Let $V : [0,T] \times\Pcal_p(\R^d) \tto C^0(\R^d,\R^d)$ be a set-valued map satisfying Hypotheses \ref{hyp:CI} and $\nu(\cdot) \in \AC([0,T],\Pcal_p(\R^d))$ be a solution of the continuity equation 
\begin{equation*}
\partial_t \nu(t) + \Div_y (w(t)\nu(t)) = 0
\end{equation*}
driven by a Carathéodory vector field $w : [0,T] \times\R^d \to \R^d$ satisfying the sublinearity estimate
\begin{equation*}
|w(t,y)| \leq m(t)(1+|y|)
\end{equation*}
for $\Lcal^1$-almost every $t \in [0,T]$ and all $y \in \R^d$. Given $R > 0$, denote by $\eta_R(\cdot) \in L^1([0,T],\R_+)$ the \textnormal{local mismatch function}, defined by 
\begin{equation*}
\eta_R(t) := \dist_{C^0(B(0,R),\R^d )} \Big( w(t) \, ; V(t,\nu(t)) \Big)
\end{equation*}
for $\Lcal^1$-almost every $t \in [0,T]$.

Then for every $(\tau,\mu_{\tau}) \in [0,T] \times \Pcal_p(\R^d)$ and each $R>0$, there exists a curve of measures $\mu(\cdot) \in \Spazo_{[0,T]}(\tau,\mu_{\tau})$ which satisfies the a priori estimate
\begin{equation}
\label{eq:LocalFilippov}
W_p(\mu(t),\nu(t)) \leq \Cpazo_T' \bigg( W_p(\mu_{\tau},\nu(\tau)) + \INTSeg{\eta_R(s)}{s}{\tau}{t} + \Epazo_{\nu}(\tau,t,R) \bigg)
\end{equation}
for all times $t \in [\tau,T]$. Therein, the constant $\Cpazo_T' > 0$ only depends on the magnitudes of the data $p,\Mpazo_p(\mu_{\tau}),\Norm{m(\cdot)}_1,\Norm{l(\cdot)}_1$ and $\Norm{L(\cdot)}_1$, while the error term $\Epazo_{\nu}(\tau,t,R)$ is given explicitly by
\begin{equation*}
\Epazo_{\nu}(\tau,t,R) := 2 \NormL{m(\cdot)}{1}{[\tau,t]} (1+\Cpazo_T) \bigg( \INTDom{(1+|y|)^p}{\{y \;\textnormal{s.t.}\; |y| \geq R/\Cpazo_T-1  \}}{\nu(\tau)(y)} \bigg)^{1/p}
\end{equation*}
for all times $t \in [\tau,T]$, where $\Cpazo_T > 0$ is the constant appearing in Proposition \ref{prop:MomentumCI}. 
\end{thm}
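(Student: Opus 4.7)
My plan is to prove Theorem \ref{thm:LocalFilippov} via a Filippov-type iteration scheme in the spirit of our earlier Peano-existence theorem from \cite{ContIncPp}, followed by a Gr\"onwall estimate using the directional superdifferentiability of the Wasserstein distance (Proposition \ref{prop:SuperdiffWass}). The main novelty relative to the classical scheme is the localisation of the mismatch function to the ball $B(0,R)$, which requires splitting the integrand of the Gr\"onwall bound into contributions inside and outside the ball, and controlling the latter via the uniform equi-integrability estimate \eqref{eq:UnifIntegCI}.

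First, I would initialise the iteration with a measurable selection $v^0(\cdot)$ of $t \tto V(t,\nu(t))$ that nearly achieves the local infimum $\eta_R(t)$ on $B(0,R)$. Measurability of $t \tto V(t,\nu(t))$ as a set-valued map into the Fr\'echet space $(C^0(\R^d,\R^d),\dsf_{cc})$ follows from Hypothesis \ref{hyp:CI}-(i), the continuity provided by Hypothesis \ref{hyp:CI}-(iv), and Lemma \ref{lem:Carathéodory}; the auxiliary set-valued map picking selections with nearly optimal local mismatch inherits closed nonempty images by the Ascoli-Arzel\`a criterion (Theorem \ref{thm:Ascoli}), so Theorem \ref{thm:MeasurableSel} yields $v^0(\cdot)$. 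Given $v^n(\cdot)$, I solve the continuity equation driven by $v^n$ with initial datum $\mu_{\tau}$ via Theorem \ref{thm:CE} to obtain a curve $\mu^{n+1}(\cdot)$, and then use Hypothesis \ref{hyp:CI}-(iv) together with a measurable selection argument to extract $v^{n+1}(t) \in V(t,\mu^{n+1}(t))$ with $\dsf_{\sup}(v^{n+1}(t),v^n(t)) \leq L(t) W_p(\mu^{n+1}(t),\mu^n(t))$. Combining the flow estimate \eqref{eq:WassEst} with Hypothesis \ref{hyp:CI}-(iii) and an iterated Gr\"onwall argument, one sees that $(\mu^n(\cdot))_{n \geq 1}$ is Cauchy in $C^0([\tau,T],\Pcal_p(\R^d))$, with a limit $\mu(\cdot) \in \Spazo_{[0,T]}(\tau,\mu_{\tau})$.

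For the quantitative inequality \eqref{eq:LocalFilippov}, I would apply Proposition \ref{prop:SuperdiffWass} to the pair $(\mu(\cdot),\nu(\cdot))$, obtaining for $\Lcal^1$-almost every $t \in [\tau,T]$ an inequality of the form
\begin{equation*}
\tderv{}{t} \tfrac{1}{p} W_p^p(\mu(t),\nu(t)) \leq \INTDom{\big\langle v(t,x) - w(t,y), j_p(x-y) \big\rangle}{\R^{2d}}{\gamma_t(x,y)}
\end{equation*}
for some $\gamma_t \in \Gamma_o(\mu(t),\nu(t))$, where $v(\cdot)$ is the selection associated to the limiting curve $\mu(\cdot)$. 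Splitting the integrand along $\{|y| \leq R\}$ and $\{|y| > R\}$, on the first region I would estimate $|v(t,x) - w(t,y)| \leq \eta_R(t) + l(t)|x-y| + \mathrm{o}(1)$ using Hypothesis \ref{hyp:CI}-(iii) and the selection property of $v^0(\cdot)$ (passed to the limit), while on the second region I would use the sublinear bounds of Hypothesis \ref{hyp:CI}-(ii) and the sublinear bound on $w$, controlling the tail in $L^p$ via the moment bound \eqref{eq:MomentumCI} and the equi-integrability estimate \eqref{eq:UnifIntegCI} applied to $\nu(\cdot)$. An application of Gr\"onwall's lemma then produces \eqref{eq:LocalFilippov}, the constant $\Cpazo_T'$ absorbing the exponentials involving $\Norm{l(\cdot)}_1$ and $\Norm{L(\cdot)}_1$.

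The main obstacle lies in this localisation/tail analysis: one must express the tail mass of the plan $\gamma_t$ in terms of the tail of $\nu(\tau)$. This is handled by noting that under Hypothesis \ref{hyp:CE} applied to $w$, one has $\nu(t) = \Phi^w_{(\tau,t) \sharp} \nu(\tau)$ in the Lipschitz case (Theorem \ref{thm:CE}), whence the uniform equi-integrability bound \eqref{eq:UnifIntegCI} for $\nu(\cdot)$ transfers the tail estimate at time $t$ into the explicit initial tail integral appearing in $\Epazo_\nu(\tau,t,R)$; in the general Carath\'eodory case one can instead invoke the superposition principle on characteristics. A secondary technicality, more bookkeeping than conceptual, is to ensure that measurability of the selections is preserved throughout the iteration, which is where the Scorza-Dragoni Theorem \ref{thm:ScorzaMulti} becomes instrumental.
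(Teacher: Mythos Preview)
The paper does not actually prove Theorem~\ref{thm:LocalFilippov}: it is recalled from \cite{ContIncPp} as a preliminary result, with no argument given. So there is no ``paper's own proof'' to compare against here; your sketch stands on its own.

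Your overall plan---Filippov iteration to build the solution, followed by a Gr\"onwall bound with a localised splitting inside/outside $B(0,R)$---is the right strategy and matches what is done in \cite{ContIncPp}. Two points deserve tightening. First, in your Phase~2 estimate you write $|v(t,x)-w(t,y)|\leq \eta_R(t)+l(t)|x-y|+o(1)$ on $\{|y|\leq R\}$, but $v(t)\in V(t,\mu(t))$ while $\eta_R$ measures distance from $w(t)$ to $V(t,\nu(t))$: you need an intermediate term $L(t)W_p(\mu(t),\nu(t))$ from Hypothesis~\ref{hyp:CI}-(iv), which then feeds back into the Gr\"onwall and is absorbed into $\Cpazo_T'$. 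This is not a gap so much as a missing bookkeeping line. Second, running the Gr\"onwall directly on the \emph{limit} curve is slightly awkward because the limiting selection $v$ is only related to $w$ through the telescoping chain $v^0,v^1,\dots$; it is cleaner either to track $W_p(\mu^n(t),\nu(t))$ at each iteration step, or---as in \cite{ContIncPp}---to bypass the superdifferentiability inequality entirely and estimate $W_p(\mu(t),\nu(t))$ via the explicit transport plan $(\Phi^v_{(\tau,t)},\Phi^w_{(\tau,t)})_\sharp\gamma_\tau$ built from the characteristic flows (or their superposition counterparts when $w$ is merely Carath\'eodory). Your route via Proposition~\ref{prop:SuperdiffWass} is a legitimate alternative, but the flow-based computation makes the appearance of the tail term $\Epazo_\nu(\tau,t,R)$ more transparent, since it arises directly from the sublinear growth bound on the flows outside $B(0,R)$ rather than from a splitting of the optimal plan $\gamma_t$.
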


\begin{rmk}[Link between continuity equations and inclusions]
\label{rmk:LinkIncEq}
In the particular case in which $V : [0,T] \times \Pcal_p(\R^d) \tto C^0(\R^d,\R^d)$ happens to be single valued and independent of $\mu$ -- that is if $V(t,\mu) = \{ v(t) \}$ for $\Lcal^1$-almost every $t \in [0,T]$ and each $\mu \in \Pcal_p(\R^d)$ --, then the corresponding velocity field satisfies Hypotheses \ref{hyp:CE}. In addition, the solution of \eqref{eq:ContIncCauchy} is then unique, coincides with that of \eqref{eq:ContEqCauchy}, and complies with the a priori estimates of Proposition \ref{prop:MomentumCI} and Theorem \ref{thm:LocalFilippov}.
\end{rmk}


\section{Infinitesimal behaviour of the reachable sets}
\label{section:Quali}

\setcounter{equation}{0} \renewcommand{\theequation}{\thesection.\arabic{equation}}

In this section, we prove two fundamental results concerning the metric differentiability properties of solutions of the Cauchy problem
\begin{equation}
\label{eq:ContIncCauchyBis}
\left\{
\begin{aligned}
& \partial_t \mu(t) \in - \Div_x \Big( V(t,\mu(t)) \mu(t) \Big), \\
& \mu(\tau) = \mu_{\tau},  
\end{aligned}
\right.
\end{equation}
which are largely inspired by the analysis carried out in \cite[Section 2]{Frankowska1995}. The first one, discussed in the ensuing theorem, deals with the existence of curves with (approximately) prescribed initial velocities.

\begin{thm}[Existence of admissible curves with approximate initial velocities]
\label{thm:Liminf}
Let $V : [0,T] \times \Pcal_p(\R^d) \tto C^0(\R^d,\R^d)$ be a set-valued map satisfying Hypotheses \ref{hyp:CI}. 

Then, there exists a subset $\Tcal \subset (0,T)$ of full $\Lcal^1$-measure such that for every $\tau \in \Tcal$, all $\mu_{\tau} \in \Pcal_p(\R^d)$, each $v_{\tau} \in V(\tau,\mu_{\tau})$ and any $\epsilon > 0$, there exist some $h_{\epsilon} > 0$ along with a curve $\mu_{\epsilon}(\cdot) \in \Spazo_{[0,T]}(\tau,\mu_{\tau})$ such that
\begin{equation}
\label{eq:LiminfIneq}
W_p \Big( \mu_{\epsilon}(\tau+h) , (\Id + h v_{\tau})_{\sharp} \mu_{\tau} \Big) \leq \epsilon h,
\end{equation}
for all $h \in [0,h_{\epsilon}]$. 
\end{thm}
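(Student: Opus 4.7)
The plan is to reduce the problem to the local Filippov estimate (Theorem \ref{thm:LocalFilippov}) applied to a well-chosen reference trajectory, and then to control the resulting mismatch integral by combining a Scorza-Dragoni argument with one-sided Lebesgue density theorems. Concretely, freeze the velocity field by setting $w(t,x) := v_\tau(x)$, which is Carathéodory, Lipschitz and sublinear with constants independent of $t$ provided that $\tau$ lies in the full-measure set where \ref{hyp:CI}-$(ii)$-$(iii)$ hold at the level of $v_\tau \in V(\tau,\mu_\tau)$. The associated characteristic flow $\Phi^{v_\tau}_{(\tau,\cdot)}$ generates the curve $\nu(\cdot) := \Phi^{v_\tau}_{(\tau,\cdot)\sharp}\mu_\tau$ starting at $\mu_\tau$, and a direct Gr\"onwall-type computation gives $\NormLp{\Phi^{v_\tau}_{(\tau,\tau+h)} - (\Id + hv_\tau)}{p}{\R^d,\R^d;\mu_\tau} = O(h^2)$. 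By \eqref{eq:WassEst}, it therefore suffices to produce an admissible $\mu_\epsilon(\cdot) \in \Spazo_{[0,T]}(\tau,\mu_\tau)$ satisfying $W_p(\mu_\epsilon(\tau+h),\nu(\tau+h)) \leq (\epsilon/2)\,h$ for all sufficiently small $h > 0$.

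Applying Theorem \ref{thm:LocalFilippov} to $\nu(\cdot)$ furnishes, for every $R > 0$, an admissible curve $\mu_\epsilon(\cdot)$ such that
\begin{equation*}
W_p(\mu_\epsilon(\tau+h),\nu(\tau+h)) \leq \Cpazo'_T \bigg( \INTSeg{\eta_R(s)}{s}{\tau}{\tau+h} + \Epazo_\nu(\tau,\tau+h,R) \bigg),
\end{equation*}
where $\eta_R(s) = \dist_{C^0(B(0,R),\R^d)}(v_\tau\,; V(s,\nu(s)))$ and the initial term $W_p(\mu_\tau,\nu(\tau))$ vanishes by construction. The residual $\Epazo_\nu$ contains a prefactor of order $O(h)$ multiplying a $p$-tail integral of $\mu_\tau$, so the $p$-uniform integrability of $\mu_\tau \in \Pcal_p(\R^d)$ allows one to fix $R$ large enough that $\Cpazo_T'\,\Epazo_\nu(\tau,\tau+h,R) \leq (\epsilon/4)\,h$ uniformly in $h$. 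The entire problem thus boils down to proving that, on a full-measure set of $\tau$'s that does not depend on the particular $(\mu_\tau,v_\tau)$, one has $\tfrac{1}{h} \int_\tau^{\tau+h} \eta_R(s)\,\textnormal{d} s \to 0$ as $h \to 0^+$.

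The main obstacle is that $t \mapsto V(t,\mu)$ is only $\Lcal^1$-measurable, so genuine continuity at the freely chosen $\tau$ cannot be invoked. The key device is the Scorza-Dragoni theorem (Theorem \ref{thm:ScorzaMulti}), whose hypotheses are met since \ref{hyp:CI}-$(iv)$ makes $\mu \mapsto V(t,\mu)$ Lipschitz -- and thus continuous in the $\dsf_{cc}$-Hausdorff sense -- for $\Lcal^1$-almost every $t$. It yields compact sets $\Acal_n \subset [0,T]$ with $\Lcal^1([0,T]\setminus\Acal_n) < 1/n$ on each of which $V|_{\Acal_n \times \Pcal_p(\R^d)}$ is lower-semicontinuous. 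Let $\Tcal_n \subset \Acal_n$ collect the one-sided density points of $\Acal_n$ that are simultaneously one-sided Lebesgue points of $L(\cdot)$ and of the truncation $m(\cdot)\mathbf{1}_{[0,T]\setminus\Acal_n}(\cdot)$; then $\Lcal^1(\Tcal_n) = \Lcal^1(\Acal_n)$, and $\Tcal := \bigcup_n \Tcal_n$ has full $\Lcal^1$-measure while depending only on the data $(V, m, L)$.

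For any $\tau \in \Tcal_n$ and arbitrary $\mu_\tau \in \Pcal_p(\R^d)$, $v_\tau \in V(\tau,\mu_\tau)$, Hypothesis \ref{hyp:CI}-$(iv)$ yields the decomposition
\begin{equation*}
\eta_R(s) \leq \dist_{C^0(B(0,R),\R^d)}\!\big(v_\tau\,; V(s,\mu_\tau)\big) + L(s)\, W_p(\mu_\tau,\nu(s)).
\end{equation*}
The Lipschitz remainder integrates to $o(h)$ since $W_p(\mu_\tau,\nu(s)) = O(s-\tau)$ and $\tau$ is a Lebesgue point of $L$. For the first term, the lower-semicontinuity of $V|_{\Acal_n \times \Pcal_p(\R^d)}$ at $(\tau,\mu_\tau)$ together with the definition of $\dsf_{cc}$ delivers, for any prescribed $\delta > 0$, a radius $\rho > 0$ such that $\dist_{C^0(B(0,R),\R^d)}(v_\tau\,; V(s,\mu_\tau)) \leq \delta$ whenever $s \in \Acal_n \cap [\tau,\tau+\rho]$; on the complement $[\tau,\tau+h] \setminus \Acal_n$ this distance is controlled by $\NormC{v_\tau}{0}{B(0,R),\R^d} + m(s)\,(1+R+\Mpazo_p(\mu_\tau))$ via \ref{hyp:CI}-$(ii)$, whose integral over $[\tau,\tau+h] \setminus \Acal_n$ is $o(h)$ by the density and Lebesgue point properties encoded in $\Tcal_n$. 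Taking $\delta$ small enough from the start produces the desired estimate and completes the proof.
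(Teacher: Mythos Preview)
Your proof follows essentially the same strategy as the paper's: freeze the velocity to build the reference curve $\nu(\cdot)$, linearise the autonomous flow, apply the Filippov estimate of Theorem~\ref{thm:LocalFilippov}, and control the mismatch integral via a Scorza--Dragoni decomposition into the ``good'' set $\Acal_n$ (where lower-semicontinuity gives smallness) and its complement (handled by density-point and sublinearity arguments). The structure and the key ideas coincide with the paper's proof.

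There is one small omission. You assert that the residual $\Epazo_\nu(\tau,\tau+h,R)$ carries a prefactor of order $O(h)$, but that prefactor is $\NormL{m(\cdot)}{1}{[\tau,\tau+h]}$, which is $O(h)$ only when $\tau$ is a one-sided Lebesgue point of $m(\cdot)$. Your set $\Tcal_n$ records Lebesgue points of $L(\cdot)$ and of the truncation $m(\cdot)\mathbf{1}_{[0,T]\setminus\Acal_n}(\cdot)$, but not of $m(\cdot)$ itself; the latter is needed both for the $\Epazo_\nu$ bound and, implicitly, for the claim $W_p(\mu_\tau,\nu(s)) = O(s-\tau)$ to hold with a constant involving $m(\tau)$. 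The fix is immediate---simply add $\Tcal_m$ to the intersection defining $\Tcal$, exactly as the paper does---so this is a bookkeeping slip rather than a conceptual gap.
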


\begin{proof}
To begin with, denote by $\Tcal_m,\Tcal_l,\Tcal_L \subset (0,T)$ the sets of one-sided Lebesgue points of the maps $m(\cdot),l(\cdot)$, and $L(\cdot)$, respectively, and by $\Tcal_{\textnormal{H}} \subset (0,T)$ the subset of full $\Lcal^1$-measure over which Hypotheses \ref{hyp:CI}-$(ii)$, $(iii)$ and $(iv)$ hold. By Theorem \ref{thm:ScorzaMulti}, there exists for every $k \geq 1$ a compact set $\Apazo^k \subset [0,T]$ satisfying $\Lcal^1([0,T] \setminus \Apazo^k) < \tfrac{1}{2^k}$, and such that $V : \Apazo^k \times \Pcal_p(\R^d) \tto C^0(\R^d,\R^d)$ is lower-semicontinuous in the sense of Definition \ref{def:Continuity}-$(i)$. For each $n \geq 1$, define then $\Acal_n \subset [0,T]$ as
\begin{equation}
\label{eq:AcalDef}
\Acal_n := \bigcap_{k > n} \Apazo^k, 
\end{equation}
and denote by $\tilde{\Acal}_n \subset \Acal_n$ the subset of its one-sided density points understood in the sense of Definition \ref{def:LebesgueDensity}, which can be characterised as the subset of full $\Lcal^1$-measure in $\Acal_n$ such that 
\begin{equation}
\label{eq:DensityPoint}
\lim_{h \to 0^+} \frac{\Lcal^1([\tau,\tau+h] \setminus \Acal_n)}{h} = 0
\end{equation}
for each $\tau \in \tilde{\Acal}_n$. Upon noting that
 for each $n \geq 1$, one has 
\begin{equation*}
\begin{aligned}
\Lcal^1([0,T] \setminus \tilde{\Acal}_n) & = \Lcal^1 \bigg( \bigcup\nolimits_{k > n} \Big( [0,T] \setminus \Apazo^k \Big) \bigg) \\
& \leq \sum_{k=n+1}^{+\infty} \Lcal^1([0,T] \setminus \Apazo^k) \leq \frac{1}{2^n}
\end{aligned}
\end{equation*}
while observing that the sequence of measurable sets $(\tilde{\Acal}_n)$ is increasing by construction, it holds that
\begin{equation*}
\begin{aligned}
\Lcal^1 \bigg( [0,T] \setminus \Big( \bigcup\nolimits_{n \geq 1} \tilde{\Acal}_n \Big) \bigg) =  \lim_{n \to +\infty} \Lcal^1([0,T] \setminus \tilde{\Acal}_n) = 0. 
\end{aligned}
\end{equation*}
Therefore, the set $\Tcal \subset (0,T)$ defined by 
\begin{equation}
\label{eq:TcalDef}
\Tcal := \bigg( \bigcup\nolimits_{n \geq 1} \tilde{\Acal}_n \bigg) \cap \Tcal_m \cap \Tcal_l \cap \Tcal_L \cap \Tcal_{\textnormal{H}},
\end{equation}
has full $\Lcal^1$-measure in $[0,T]$, and in the sequel we fix an element $\tau \in \Tcal$. We also pick a radius $R_{\epsilon} > 0$ satisfying
\begin{equation}
\label{eq:RepsDef}
2 m(\tau)(1+\Cpazo_T) \bigg( \INTDom{\big( 1+|x|^p \big)}{\{ x \in \R^d ~ \textnormal{s.t.} \; |x| \geq R_{\epsilon}/\Cpazo_T -1\}}{\mu_{\tau}(x)} \bigg)^{1/p} \leq \epsilon,
\end{equation}
where $\Cpazo_T > 0$ is given as in Proposition \ref{prop:MomentumCI} and Theorem \ref{thm:LocalFilippov}. 


\paragraph*{Step 1 -- Construction of an admissible curve.}

Observe that under Hypotheses \ref{hyp:CI}, each element $v_{\tau} \in V(\tau,\mu_{\tau})$ is a time-independent vector field satisfying Hypotheses \ref{hyp:CE} with constants $m(\tau),l(\tau) \geq 0$. Thus, by Theorem \ref{thm:CE}, there exists a unique solution $\nu(\cdot) \in \AC([0,T],\Pcal_p(\R^d))$ of 
\begin{equation*}
\left\{
\begin{aligned}
& \partial_t \nu(t) + \Div_x (v_{\tau} \nu(t)) =0, \\
& \nu(\tau) = \mu_{\tau},
\end{aligned}
\right.
\end{equation*}
and the latter can be represented explicitly as 
\begin{equation*}
\nu(t) = \Phi^{v_{\tau}}_{(\tau,t) \,\sharp} \mu_{\tau}
\end{equation*}
for all times $t \in [0,T]$. Therein, the maps $(\Phi_{(\tau,t)}^{v_{\tau}})_{t \in [0,T]} \subset C^0(\R^d,\R^d)$ are the characteristic flows generated by $v_{\tau} \in V(\tau,\mu_{\tau})$ in the sense of Definition \ref{def:Flows}. By standard linearisation techniques (see e.g. \cite[Appendix A]{SemiSensitivity}), it can further be shown that 
\begin{equation*}
\Phi^{v_{\tau}}_{(\tau,\tau+h)}(x) = x + hv_{\tau}(x) + o_{\tau,x}(h)
\end{equation*}
for all $x \in \R^d$ and any sufficiently small $h>0$, where $\INTDom{|o_{\tau,x}(h)|^p}{\R^d}{\mu_{\tau}(x)} = o_{\tau}(|h|^p)$. Thence, upon remarking that 
\begin{equation*}
\Big( \Id + h v_{\tau} , \Phi_{(\tau,\tau+h)}^{v_{\tau}} \Big)_{\raisebox{4pt}{$\scriptstyle{\sharp}$}} \mu_{\tau} \in \Gamma \Big( (\Id + h v_{\tau})_{\sharp}\mu_{\tau} , \nu(\tau+h) \Big),
\end{equation*}
we straightforwardly deduce from \eqref{eq:WassEst} the distance estimate
\begin{equation}
\label{eq:ScorzaEst1}
W_p \Big( \nu(\tau+h),(\Id + h v_{\tau})_{\sharp} \mu_{\tau} \Big) \leq o_{\tau}(h),
\end{equation}
which holds for every sufficiently small $h> 0$. Moreover, since $\nu(\tau) = \mu_{\tau}$ by construction, there exists by the Filippov estimates of Theorem \ref{thm:LocalFilippov} a curve $\mu_{\epsilon}(\cdot) \in \Spazo_{[0,T]}(\tau,\mu_{\tau})$ which satisfies
\begin{equation}
\label{eq:ScorzaEst2}
W_p(\mu_{\epsilon}(t),\nu(t)) \leq \Cpazo_T' \bigg( \INTSeg{\dist_{C^0(B(0,R_{\epsilon}),\R^d)} \Big( v_{\tau} \, ; V(s,\nu(s)) \Big)}{s}{\tau}{t} + \Epazo_{\nu}(\tau,t,R_{\epsilon}) \bigg)
\end{equation}
for all times $t \in [\tau,T]$, where $\Cpazo_T' >0$ only depends on the magnitudes of $p,\Mpazo_p(\mu_{\tau}),\Norm{m(\cdot)}_1,\Norm{l(\cdot)}_1$, and $\Norm{L(\cdot)}_1$.


\paragraph*{Step 2 -- Distance estimate in the vicinity of $\tau \in \Tcal$.}

In order to conclude, we need to show that the right-hand side of \eqref{eq:ScorzaEst2} is bounded from above by  $\epsilon h + o_{\tau,\epsilon}(h)$ when $t = \tau+h$ with $h >0$ sufficiently small. Observe first that by our choice of $R_{\epsilon} > 0$ via \eqref{eq:RepsDef}, one has that
\begin{equation}
\label{eq:ScorzaEst3}
\begin{aligned}
\Epazo_{\nu}(\tau,\tau+h,R_{\epsilon}) & = 2 (1+\Cpazo_T) \NormL{m(\cdot)}{1}{[\tau,\tau+h]} \bigg( \INTDom{\big( 1+|x|^p \big)}{\{x \in \R^d ~ \textnormal{s.t.} \; |x| \geq R_{\epsilon}/\Cpazo_T -1\}}{\mu_{\tau}(x)} \bigg)^{1/p} \\
& \leq \epsilon h + o_{\tau,\epsilon}(h)
\end{aligned}
\end{equation}
whenever $h >0$ is small enough, since $\tau \in \Tcal$ is a one-sided Lebesgue point of $m(\cdot) \in L^1([0,T],\R_+)$.

In order to derive an upper-bound on the integral of the mismatch function, recall that by the definition \eqref{eq:TcalDef} of $\Tcal \subset (0,T)$, there exists an integer $n \geq 1$ such that $\tau \in \tilde{\Acal}_n$ and the set-valued map $t \in \Acal_n \tto V(t,\mu_{\tau})$ is lower-semicontinuous. Hence for each $\epsilon' > 0$ there exists some $\delta_n >0$ for which
\begin{equation*}
\dist_{C^0(\R^d,\R^d)} \Big( v_{\tau} \, ; V(t,\mu_{\tau}) \Big) \leq \epsilon'
\end{equation*}
for every $t \in [\tau,\tau+\delta_n] \cap \Acal_n$, where we recall that $(C^0(\R^d,\R^d),\dsf_{cc}(\cdot,\cdot))$ is equipped with the Fr\'echet structure described in Definition \ref{def:dcc}. In particular, by choosing $\epsilon' > 0$ to be sufficiently small, it follows from the definition \eqref{eq:dcc} of the metric $\dsf_{cc}(\cdot,\cdot)$ that 
\begin{equation}
\label{eq:ScorzaEst4}
\dist_{C^0(B(0,R_{\epsilon}),\R^d)} \Big( v_{\tau} \, ; V(t,\mu_\tau) \Big) \leq \epsilon, 
\end{equation}
for every $t \in [\tau,\tau+\delta_n] \cap \Acal_n$, where $(C^0(B(0,R_{\epsilon}),\R^d),\NormC{\cdot}{0}{B(0,R_{\epsilon}),\R^d})$ is endowed with its usual Banach space structure. Besides, noting that the sets $V(t,\mu_{\tau}) \subset C^0(\R^d,\R^d)$ are compact under Hypotheses \ref{hyp:CI}-$(ii)$ and $(iii)$ as a consequence of Theorem \ref{thm:Ascoli}, while observing that the map 
\begin{equation*}
w \in C^0(\R^d,\R^d) \mapsto \NormC{v_{\tau}-w}{0}{B(0,R_{\epsilon},\R^d} \in \R_+
\end{equation*}
is continuous, it follows from Theorem \ref{thm:MeasurableSel} applied with Hypothesis \ref{hyp:CI}-$(i)$ that there exists a measurable selection $t \in [0,T] \mapsto w_{\tau}(t) \in V(t,\mu_{\tau})$ such that 
\begin{equation}
\label{eq:ScorzaEst4bis}
\NormC{v_{\tau} - w_{\tau}(t)}{0}{B(0,R_{\epsilon}),\R^d} = \dist_{C^0(B(0,R_{\epsilon}),\R^d)} \Big( v_{\tau} \, ; V(t,\mu_\tau) \Big) 
\end{equation}
for every $t \in [\tau,\tau+\delta_n] \cap \Acal_n$. Furthermore, by Hypotheses \ref{hyp:CI}-$(i)$ and $(iv)$ combined again with Theorem \ref{thm:MeasurableSel}, there exists a measurable selection $t \in [0,T] \mapsto w(t) \in V(t,\nu(t))$ satisfying
\begin{equation*}
\NormC{w(t) - w_{\tau}(t)}{0}{B(0,R_{\epsilon}),\R^d} \leq \, L(t) W_p(\mu_{\tau},\nu(t))
\end{equation*}
for $\Lcal^1$-almost every $t \in [0,T]$, which together with \eqref{eq:ScorzaEst4} and \eqref{eq:ScorzaEst4bis} further yields 
\begin{equation}
\label{eq:ScorzaEst5}
\dist_{C^0(B(0,R_{\epsilon}),\R^d)} \Big( v_{\tau} \, ; V(t,\nu(t)) \Big) \leq \epsilon + L(t)W_p(\mu_{\tau},\nu(t))
\end{equation}
for $\Lcal^1$-almost every $t \in [\tau,\tau+\delta_n] \cap \Acal_n$. There now remains to estimate the integral over $[\tau,\tau+h]$ of the local mismatch function. The latter can be decomposed into the sum of two terms as 
\begin{equation}
\begin{aligned}
\label{eq:ScorzaEst61}
\INTSeg{\dist_{C^0(B(0,R_{\epsilon}),\R^d)} \Big( v_{\tau} \, ; V(t,\nu(t)) \Big)}{t}{\tau}{\tau + h} & = \INTDom{\dist_{C^0(B(0,R_{\epsilon}),\R^d)} \Big( v_{\tau} \, ; V(t,\nu(t)) \Big)}{[\tau,\tau + h] \setminus \Acal_n}{t} \\
& \hspace{0.45cm} + \INTDom{\dist_{C^0(B(0,R_{\epsilon}),\R^d)} \Big( v_{\tau} \, ; V(t,\nu(t)) \Big)}{[\tau,\tau+h] \cap \Acal_n}{t}. 
\end{aligned}
\end{equation}
As a consequence of Hypothesis \ref{hyp:CI}-$(ii)$, the first of these two integrals can be estimated as 
\begin{equation}
\label{eq:ScorzaEst62}
\begin{aligned}
& \INTDom{\dist_{C^0(B(0,R_{\epsilon}),\R^d)} \Big( v_{\tau} \, ; V(t,\nu(t)) \Big)}{[\tau,\tau+h] \setminus \Acal_n}{t} \\
& \hspace{2cm} \leq \INTDom{\bigg( \NormC{v_{\tau}}{0}{B(0,R_{\epsilon}),\R^d} +  \sup_{w \in V(t,\nu(t))} \NormC{w}{0}{B(0,R_{\epsilon}),\R^d} \bigg)}{[\tau,\tau+h] \setminus \Acal_n}{t} \\
& \hspace{2cm} \leq (1+R_{\epsilon} + \Cpazo_T) \INTDom{\big( m(\tau) + m(t) \big)}{[\tau,\tau+h] \setminus \Acal_n}{t} \\
& \hspace{2cm} \leq (1+R_{\epsilon} + \Cpazo_T) \bigg(  2 m(\tau) \Lcal^1([\tau,\tau+h] \setminus \Acal_n) + \INTDom{\big( m(t) - m(\tau) \big)}{[\tau,\tau+h] \setminus \Acal_n}{t} \bigg)  
\\
& \hspace{2cm} = o_{\tau,\epsilon}(h)
\end{aligned}
\end{equation}
where we used the characterisation \eqref{eq:DensityPoint} of the one-sided density points of $\Acal_n$, along with the fact that $\tau \in \Tcal$ is a one-sided Lebesgue point of $m(\cdot) \in L^1([0,T],\R_+)$, as well as the moment bound of Proposition \ref{prop:MomentumCI}. By \eqref{eq:ScorzaEst5}, the second term in \eqref{eq:ScorzaEst61} can be bounded from above as
\begin{equation}
\label{eq:ScorzaEst63}
\begin{aligned}
& \INTDom{\dist_{C^0(B(0,R_{\epsilon}),\R^d)} \Big( v_{\tau} \, ; V(t,\nu(t)) \Big)}{[\tau,\tau+h] \cap \Acal_n}{t} \\
& \hspace{2cm} \leq \INTDom{\Big( \epsilon + L(t) W_p(\mu_{\tau},\nu(t)) \Big)}{[\tau,\tau+h] \cap \Acal_n}{t} \\
& \hspace{2cm} \leq \epsilon h + \Cpazo_T' \INTSeg{L(t) \INTSeg{\NormC{v_{\tau}}{0}{B(0,R_{\epsilon}),\R^d}}{s}{\tau}{t} \,}{t}{\tau}{\tau+h} \\
& \hspace{2.9cm} + \Cpazo_T' \INTSeg{ 2 \NormL{m(\cdot)}{1}{[\tau,\tau+h]}(1 +\Cpazo_T) \bigg( \INTDom{\big( 1+|x| \big)^p}{\{x \; \textnormal{s.t.} \; |x| \geq R_{\epsilon}/\Cpazo_T -1\}}{\mu_{\tau}(x)}\bigg)^{1/p}}{t}{\tau}{\tau+h} \\
& \hspace{2cm} \leq \epsilon h + o_{\tau,\epsilon}(h),
\end{aligned} 
\end{equation}
where we used the single-valued version of the distance estimate of Theorem \ref{thm:LocalFilippov} -- see Remark \ref{rmk:LinkIncEq} -- along with the fact that $\tau \in \Tcal$ is a one-sided Lebesgue point of $m(\cdot),L(\cdot) \in L^1([0,T],\R_+)$. By plugging \eqref{eq:ScorzaEst62} and \eqref{eq:ScorzaEst63} into \eqref{eq:ScorzaEst61} and combining the resulting estimate with \eqref{eq:ScorzaEst2}, one then obtains
\begin{equation}
\label{eq:ScorzaEst6}
W_p \Big( \mu_{\epsilon}(\tau+h) , \nu(\tau+h) \Big) \leq \epsilon h + o_{\tau,\epsilon}(h)
\end{equation}
for every small $h \in (0,\delta_n]$. Upon merging \eqref{eq:ScorzaEst6} with \eqref{eq:ScorzaEst1} and taking $h_{\epsilon} > 0$ so that $o_{\tau,\epsilon}(h) \leq \epsilon h$ for all $h \in [0,h_{\epsilon}]$, one finally has up to rescaling $\epsilon > 0$ by a constant that
\begin{equation*}
W_p \Big( \mu_{\epsilon}(\tau+h) , (\Id + h v_{\tau})_{\sharp} \mu_{\tau} \Big) \leq \epsilon h, 
\end{equation*}
for all $h \in [0,h_{\epsilon}]$, which concludes the proof.
\end{proof}

In the following theorem, we establish a property that is complementary to the one we previously discussed, which ensures the existence of an admissible velocity representing (approximately) the local behaviour of any given solution of \eqref{eq:ContIncCauchy} when the admissible velocities are convex.

\begin{thm}[Infinitesimal behaviour of reachable sets]
\label{thm:Limsup}
Let $V : [0,T] \times \Pcal_p(\R^d) \tto C^0(\R^d,\R^d)$ be a set-valued map with convex images satisfying Hypotheses \ref{hyp:CI}. 

Then, there exists a subset $\Tcal \subset (0,T)$ of full $\Lcal^1$-measure such that for every $\tau \in \Tcal$, all $\mu_{\tau} \in \Pcal_p(\R^d)$, each solution $\mu(\cdot) \in \AC([0,T],\Pcal_p(\R^d))$ of \eqref{eq:ContIncCauchyBis}, any $\epsilon > 0$ and every sequence $h_i \to 0$, there exists an element $v_{\tau}^{\epsilon} \in V(\tau,\mu_{\tau})$ such that 
\begin{equation}
\label{eq:InfinitesimalReach}
W_p \Big( \mu(\tau+h_{i_k}^{\epsilon}) , (\Id + h_{i_k}^{\epsilon} v_{\tau}^{\epsilon})_{\sharp} \mu_{\tau} \Big) \leq \epsilon |h_{i_k}^{\epsilon}|
\end{equation}
along a subsequence $h_{i_k}^{\epsilon} \to 0$ which depends both on $v_{\tau}^{\epsilon} \in V(\tau,\mu_{\tau})$ and $\epsilon >0$. 
\end{thm}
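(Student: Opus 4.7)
The plan is to represent $\mu(\cdot)$ via the characteristic flow of its driving selection, rewrite the resulting displacement as a time-average of admissible velocities, and extract a limit point lying in $V(\tau,\mu_\tau)$ using the assumed convexity. The structure mirrors that of Theorem \ref{thm:Liminf} but runs in the opposite direction. The good set $\Tcal \subset (0,T)$ is constructed exactly as in that proof: for each $\mu_\tau$ of interest, apply Theorem \ref{thm:ScorzaMulti} to $V(\cdot,\mu_\tau)$ to produce compact sets $\Apazo^k$ with $\Lcal^1([0,T]\setminus \Apazo^k) < 2^{-k}$ on which the restricted multifunction has closed graph, set $\Acal_n := \bigcap_{k>n} \Apazo^k$, and intersect $\bigcup_n \tilde{\Acal}_n$ with the one-sided Lebesgue points of $m(\cdot),l(\cdot),L(\cdot)$ and with the full-measure subset where Hypotheses \ref{hyp:CI}-$(ii)$, $(iii)$ and $(iv)$ hold.

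Fix $\tau \in \Tcal$, a curve $\mu(\cdot) \in \Spazo_{[0,T]}(\tau,\mu_\tau)$, and the $\Lcal^1$-measurable selection $v(\cdot)$ driving it, which is a Carathéodory field satisfying Hypotheses \ref{hyp:CE} by \ref{hyp:CI}-$(ii)$ and $(iii)$. Theorem \ref{thm:CE} yields the representation $\mu(\tau+h) = \Phi^v_{(\tau,\tau+h)\,\sharp}\mu_\tau$, and defining the instantaneous displacement field
\[
\bar v^h(x) := \tfrac{1}{h}\bigl(\Phi^v_{(\tau,\tau+h)}(x)-x\bigr) = \tfrac{1}{h}\INTSeg{v\bigl(s,\Phi^v_{(\tau,s)}(x)\bigr)}{s}{\tau}{\tau+h}
\]
produces the \emph{exact} identity $\mu(\tau+h) = (\Id + h\bar v^h)_\sharp \mu_\tau$. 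Hypotheses \ref{hyp:CI}-$(ii)$ and $(iii)$ force $\{\bar v^h\}_{h \in (0,1]}$ to be uniformly sublinear and locally equi-Lipschitz, hence relatively compact in $(C^0(\R^d,\R^d),\dsf_{cc})$ by Theorem \ref{thm:Ascoli}. Given the prescribed sequence $h_i\to 0$, one extracts a subsequence $h_{i_k}\to 0$ along which $\bar v^{h_{i_k}} \to w$ in $\dsf_{cc}$ for some limit $w \in C^0(\R^d,\R^d)$ inheriting the sublinear bound; Lemma \ref{lem:ConvdccWass} then upgrades this to convergence in $\Lpazo^p(\R^d,\R^d;\mu_\tau)$, so that \eqref{eq:WassEst} gives $W_p\bigl(\mu(\tau+h_{i_k}),(\Id+h_{i_k}w)_\sharp \mu_\tau\bigr) = o(h_{i_k})$.

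The main obstacle, and the only step where the convexity assumption is truly used, is to show that $w$ lies at $\Lpazo^p(\R^d,\R^d;\mu_\tau)$-distance at most $\epsilon/2$ from $V(\tau,\mu_\tau)$; any $v_\tau^\epsilon \in V(\tau,\mu_\tau)$ within this distance of $w$ will then close the estimate by the triangle inequality and \eqref{eq:WassEst}. For this, I would use Hypothesis \ref{hyp:CI}-$(iv)$ together with Theorem \ref{thm:MeasurableSel} to build a measurable companion $s\mapsto \tilde v(s) \in V(s,\mu_\tau)$ satisfying $\dsf_{\sup}(v(s),\tilde v(s)) \leq L(s)W_p(\mu(s),\mu_\tau)$; the absolute continuity estimate \eqref{eq:ACCI} combined with the fact that $\tau$ is a Lebesgue point of $L(\cdot)$ and $l(\cdot)$ shows that $\bar v^h$ and $\tfrac{1}{h}\int_\tau^{\tau+h}\tilde v(s)\,ds$ differ by $o(1)$ in $\Lpazo^p(\R^d,\R^d;\mu_\tau)$. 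Restricting the integration to $[\tau,\tau+h]\cap \Acal_n$ contributes a negligible error because $\tau$ is a density point of $\Acal_n$; on the compact set $\Acal_n$ the closed-graph property of Theorem \ref{thm:ScorzaMulti}-$(ii)$, together with the Ascoli-Arzelà compactness of values, makes the restricted correspondence $V(\cdot,\mu_\tau)$ upper-semicontinuous, so $V(s,\mu_\tau)$ is uniformly close to $V(\tau,\mu_\tau)$ whenever $s \in \Acal_n$ is near $\tau$. The convexity of $V(\tau,\mu_\tau)$ is then what forces the time-average of admissible velocities to remain in an $\epsilon$-neighbourhood of $V(\tau,\mu_\tau)$ rather than only of its closed convex hull, which completes the proof.
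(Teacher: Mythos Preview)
Your overall strategy is sound and close to the paper's, but there is one genuine gap. You construct $\Tcal$ ``exactly as in the proof of Theorem \ref{thm:Liminf}'', i.e.\ $\Acal_n := \bigcap_{k>n}\Apazo^k$ with $\Apazo^k$ the Scorza--Dragoni sets, and then claim that on $\Acal_n$ the closed-graph property together with ``Ascoli--Arzel\`a compactness of values'' yields upper-semicontinuity of $s \in \Acal_n \mapsto V(s,\mu_\tau)$. This inference is not valid: closed graph plus compactness of \emph{each} value $V(s,\mu_\tau)$ does not imply upper-semicontinuity. What is needed (cf.\ \cite[Proposition 1.4.8]{Aubin1990}, which the paper invokes) is that the values near $\tau$ lie in a \emph{common} compact subset of $C^0(\R^d,\R^d)$. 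Under Hypotheses \ref{hyp:CI}-$(ii)$ and $(iii)$ this requires a uniform bound on $m(s)$ and $l(s)$ for $s \in \Acal_n$, which your construction of $\Acal_n$ does not provide.

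The paper's proof fixes precisely this point: its $\Acal_n$ are defined as $\big(\bigcap_{k>n}\Apazo^k\big) \cap \Cpazo_n$, where $\Cpazo_n$ is a closed subset of $\{s : m(s)+l(s) \leq n\}$ chosen so that $\Lcal^1([0,T]\setminus \Cpazo_n) \to 0$. With this extra intersection, $\bigcup_{s\in\Acal_n} V(s,\mu(s))$ is contained in a fixed Ascoli--Arzel\`a compact set $\Kpazo_n$, and upper-semicontinuity follows. Once you make this correction, your argument goes through. Note incidentally that the paper works directly with the upper-semicontinuity of $s \mapsto V(s,\mu(s))$ (using the joint closed graph in $(t,\mu,v)$ given by Theorem \ref{thm:ScorzaMulti}-$(ii)$), which absorbs both the $t$-variation and the $\mu(s)\to\mu_\tau$ variation in one step and makes your detour through Hypothesis \ref{hyp:CI}-$(iv)$ unnecessary.
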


\begin{proof}
First, let $\Tcal_m,\Tcal_l, \Tcal_L$ and $\Tcal_{\textnormal{H}}$ be the subsets of full $\Lcal^1$-measure in $(0,T)$ defined as in the proof of Theorem \ref{thm:Liminf}. By Theorem \ref{thm:ScorzaMulti}, there exists for each $k \geq 1$ a compact set $\Apazo^k \subset [0,T]$ satisfying $\Lcal^1([0,T] \setminus \Apazo^k) < \tfrac{1}{2^k}$, and such that the graph  
\begin{equation*}
\Graph (V)_{|\Apazo^k \times \Pcal_p(\R^d) \times C^0(\R^d,\R^d)} := \Big\{ (t,\mu,v) \in \Apazo^k \times \Pcal_p(\R^d) \times C^0(\R^d,\R^d) ~\, \textnormal{s.t.}~ v \in V(t,\mu) \Big\}
\end{equation*}
of the restriction of $V : [0,T] \times \Pcal_p(\R^d) \tto C^0(\R^d,\R^d)$ is closed in $\Apazo^k \times \Pcal_p(\R^d) \times C^0(\R^d,\R^d)$. For each $n \geq 1$, consider the increasing sequence of $\Lcal^1$-measurable sets $(\hat{\Bpazo}_n)$ defined by 
\begin{equation*}
\hat{\Bpazo}_n := \Big\{ t \in [0,T] ~\, \textnormal{s.t.}~ m(t) + l(t) \leq n \Big\}. 
\end{equation*}
Then, there exists an increasing subsequence $(\Bpazo_n)$ of Lebesgue measurable sets that we do not relabel satisfying $\Bpazo_n \subset \hat{\Bpazo}_n$ as well as $\Lcal^1(\Bpazo_n) >0$ for each $n \geq 1$, and such that 
\begin{equation*}
\Lcal^1([0,T] \setminus \Bpazo_n) ~\underset{n \to +\infty}{\longrightarrow}~ 0.
\end{equation*}
By the inner regularity of the Lebesgue measure, one can find an increasing sequence of closed sets $(\Cpazo_n) \subset [0,T]$ satisfying $\Cpazo_n \subset \Bpazo_n$ for each $n \geq 1$, as well as 
\begin{equation*}
\Lcal^1(\Cpazo_n) \geq \Big( 1 - \tfrac{1}{T}\Lcal^1([0,T] \setminus \Bpazo_n)  \Big) \Lcal^1(\Bpazo_n).
\end{equation*}
Then, for each $n \geq1$, define the closed set $\Acal_n \subset [0,T]$ by 
\begin{equation*}
\Acal_n := \bigg( \bigcap\nolimits_{k > n} \Apazo^k  \bigg) \cap \Cpazo_n, 
\end{equation*}
and as in the proof of Theorem \ref{thm:Liminf}, denote by $\tilde{\Acal}_n$ the subset of all one-sided density points of $\Acal_n$. Noting that the sequence of measurable sets $(\tilde{\Acal}_n)$ is increasing by construction, and that it satisfies
\begin{equation*}
\begin{aligned}
\Lcal^1([0,T] \setminus \tilde{\Acal}_n) & = \Lcal^1 \bigg( \bigcup\nolimits_{k>n} \Big( [0,T] \setminus \Apazo^k \Big) \cup \Big( [0,T] \setminus \Cpazo_n \Big) \bigg) \\
& \leq \sum_{k = n+1}^{+ \infty} \Lcal^1([0,T] \setminus \Apazo^k) + T - \Lcal^1(\Bpazo_n) + \Lcal^1([0,T] \setminus \Bpazo_n) \\
& \leq \frac{1}{2^n} + 2 \, \Lcal^1([0,T] \setminus \Bpazo_n) ~\underset{n \to +\infty}{\longrightarrow}~0, 
\end{aligned}
\end{equation*}
it follows that the set $\Tcal \subset (0,T)$ defined by
\begin{equation*}
\Tcal := \bigg( \bigcup\nolimits_{n \geq 1} \tilde{\Acal}_n \bigg) \cap \Tcal_m \cap \Tcal_l \cap \Tcal_L \cap \Tcal_{\textnormal{H}}
\end{equation*}
has full $\Lcal^1$-measure in $[0,T]$. Fix now some $\tau \in \Tcal$, a measure $\mu_{\tau} \in \Pcal_p(\R^d)$ as well as a solution $\mu(\cdot) \in \Spazo_{[0,T]}(\tau,\mu_{\tau})$ of \eqref{eq:ContIncCauchyBis} and a sequence $h_i \to 0^+$. Choose also $R_{\epsilon} > 0$ in such a way that 
\begin{equation}
\label{eq:RepsDefBis}
2m(\tau)(1+\Cpazo_T) \bigg( \INTDom{\big( 1+|x|^p \big)}{\{ x \in \R^d ~ \textnormal{s.t.} \; |x| \geq R_{\epsilon}\}}{\mu_{\tau}(x)} \bigg)^{1/p} \leq \epsilon,
\end{equation}
where $\Cpazo_T > 0$ is the constant appearing in Proposition \ref{prop:MomentumCI}, which we recall only depends on the magnitudes of $p,\Mpazo_p(\mu_{\tau})$ and $\Norm{m(\cdot)}_1$. 

By the definition of solutions to \eqref{eq:ContIncCauchyBis}, there exists an $\Lcal^1$-measurable selection $t \in [0,T] \mapsto v(t) \in V(t,\mu(t)) \subset C^0(\R^d,\R^d)$ such that the curve $\mu(\cdot)$ solves the Cauchy problem
\begin{equation*}
\left\{
\begin{aligned}
& \partial_t \mu(t) + \Div(v(t)\mu(t)) = 0, \\
& \mu(\tau) = \mu_{\tau}.
\end{aligned}
\right.
\end{equation*}
Besides, it can be deduced from Lemma \ref{lem:Carathéodory} that $(t,x) \in [0,T] \times \R^d \mapsto v(t,x) \in \R^d$ is a Carathéodory vector field which satisfies Hypotheses \ref{hyp:CE}, see for instance \cite{ContIncPp}. Hence by Theorem \ref{thm:CE}, the curve $\mu(\cdot) \in \AC([0,T],\Pcal_p(\R^d))$ is given explicitly by 
\begin{equation*}
\mu(t) = \Phi^v_{(\tau,t) \, \sharp} \mu_{\tau}
\end{equation*}
for all times $t \in[0,T]$. Moreover, there exists by construction an integer $n \geq 1$ such that $\tau \in \tilde{\Acal}_n$ and the restricted set-valued map $t \in \Acal_n \tto V(t,\mu(t))$ has closed graph. Since $m(\cdot)$ and $l(\cdot)$ are both bounded from above over $\Acal_n$, it stems from Hypotheses \ref{hyp:CI}-$(i)$, $(ii)$ and $(iii)$ together with Theorem \ref{thm:Ascoli} that there exists a compact set $\Kpazo_n \subset C^0(\R^d,\R^d)$ such that 
\begin{equation*}
V(t,\mu(t)) \subset \Kpazo_n
\end{equation*}
for all times $t \in \Acal_n$. Thence, it follows e.g. from \cite[Proposition 1.4.8]{Aubin1990} that $t \in \Acal_n \tto V(t,\mu(t))$ is upper-semicontinuous in the sense of Definition \ref{def:Continuity}-$(ii)$. In particular, for each $\epsilon > 0$, there exists a measurable selection $t \in [0,T] \mapsto \tilde{v}_{\tau}^{\epsilon}(t) \in V(\tau,\mu_{\tau})$ along with some $\delta_n > 0$ such that
\begin{equation}
\label{eq:LimsupSelection}
\NormC{v(t) - \tilde{v}_{\tau}^{\epsilon}(t)}{0}{B(0,R_{\epsilon}),\R^d} \leq \epsilon
\end{equation}
for every $t \in \Acal_n \cap [\tau,\tau+\delta_n]$, where we leveraged the expression \eqref{eq:dcc} of the metric $\dsf_{cc}(\cdot,\cdot)$.

In what follows, we use the selection $t \in [0,T] \mapsto \tilde{v}_{\tau}^{\epsilon}(t) \in V(\tau,\mu_{\tau})$ to build an element $v_{\tau}^{\epsilon} \in V(\tau,\mu_{\tau})$ satisfying \eqref{eq:InfinitesimalReach} along some subsequence $h_{i_k}^{\epsilon} \to 0$. We assume without loss of generality that $h_i \to 0^+$, the general case being similar. By the definition \eqref{eq:FlowDef} of characteristic flows, one has that
\begin{equation}
\label{eq:LimsupFlowExp}
\Phi^v_{(\tau,\tau+h_i)}(x) = x + \INTSeg{v(t,x)}{t}{\tau}{\tau+h_i} +  \INTSeg{\bigg( v \Big(t, \Phi^v_{(\tau,t)}(x) \Big) - v(t,x) \bigg)}{t}{\tau}{\tau+h_i}
\end{equation}
for all $x \in \R^d$. As a consequence of Hypothesis \ref{hyp:CE}-$(ii)$, the second integral term in the right-hand side of \eqref{eq:LimsupFlowExp} can be estimated from above for all $x \in B(0,R_{\epsilon})$ as
\begin{equation}
\label{eq:LimsupEst1}
\begin{aligned}
\bigg| \INTSeg{\bigg( v \Big(t, \Phi^v_{(\tau,t)}(x) \Big) - v(t,x) \bigg)}{t}{\tau}{\tau+h_i} \hspace{0.05cm} \bigg| & \leq \INTSeg{l(t) \big| \Phi_{(\tau,t)}^v(x) - x \big|}{t}{\tau}{\tau+h_i} \\
& 
\leq (1+R_{\epsilon} + \Cpazo_T) \INTSeg{l(t) \bigg( \INTSeg{m(s)}{s}{\tau}{t} \bigg)}{t}{\tau}{\tau+h_i} \\
& = o_{\tau,\epsilon}(h_i)
\end{aligned}
\end{equation}
for $h_i > 0$ sufficiently small, since $\tau \in \Tcal$ is a one-sided Lebesgue point of $m(\cdot),l(\cdot) \in L^1([0,T],\R_+)$. At this stage, one may further decompose the first integral in the right-hand side of \eqref{eq:LimsupFlowExp} into
\begin{equation}
\label{eq:LimsupEst20}
\begin{aligned}
\INTSeg{v(t,x)}{t}{\tau}{\tau+h_i} = \INTSeg{\tilde{v}_{\tau}^{\epsilon}(t,x)}{t}{\tau}{\tau+h_i} & + \INTDom{\Big( v(t,x) - \tilde{v}_{\tau}^{\epsilon}(t,x) \Big)}{[\tau,\tau+h_i] \setminus \Acal_n}{t} \\
& + \INTDom{\Big( v(t,x) - \tilde{v}_{\tau}^{\epsilon}(t,x) \Big)}{[\tau,\tau+h_i] \cap \Acal_n}{t}
\end{aligned}
\end{equation}
and use Hypothesis \ref{hyp:CI}-$(ii)$ to estimate the second expression in \eqref{eq:LimsupEst20} as
\begin{equation}
\label{eq:LimsupEst21}
\bigg| \INTDom{\Big( v(t,x) - \tilde{v}_{\tau}^{\epsilon}(t,x) \Big)}{[\tau,\tau+h_i] \setminus \Acal_n}{t} \, \bigg| \leq 2(1+R_{\epsilon} + \Cpazo_T) \INTDom{m(t)}{[\tau,\tau+h_i] \setminus \Acal_n}{t} = o_{\tau,\epsilon}(h_i)
\end{equation}
for all $x \in B(0,R_{\epsilon})$, since $\tau \in \Tcal$ is a one-sided Lebesgue point of $m(\cdot)$ as well as a one-sided density point of $\Acal_n$. Regarding the third term in the right-hand side of \eqref{eq:LimsupEst20}, it follows from \eqref{eq:LimsupSelection} that 
\begin{equation}
\label{eq:LimsupEst22}
\bigg\| \INTDom{\Big( v(t) - \tilde{v}_{\tau}^{\epsilon}(t) \Big)}{[\tau,\tau+h_i] \cap \Acal_n}{t} \, \bigg\|_{C^0(B(0,R_{\epsilon}),\R^d)} \leq \epsilon h_i, 
\end{equation}
and by merging the estimates of \eqref{eq:LimsupEst21}-\eqref{eq:LimsupEst22} with \eqref{eq:LimsupEst20} while inserting the resulting expression together with \eqref{eq:LimsupEst1} inside \eqref{eq:LimsupFlowExp}, one further obtains that
\begin{equation}
\label{eq:LimsupFlowExp2}
\bigg| \, \Phi_{(\tau,\tau+h_i)}^v(x) - x - \INTSeg{\tilde{v}_{\tau}^{\epsilon}(t,x)}{t}{\tau}{\tau+h_i} \, \bigg| \leq \epsilon h_i + o_{\tau,\epsilon}(h_i)
\end{equation}
for all $x \in B(0,R_{\epsilon})$. Observe now that since $V(\tau,\mu_{\tau})_{|B(0,R_{\epsilon})} \subset C^0(B(0,R_{\epsilon}),\R^d)$ is convex and closed, there exists by the separation theorem a family of elements $(v_{\tau}^{\epsilon}(h_i)) \subset V(\tau,\mu_{\tau})$ such that  
\begin{equation*}
\frac{1}{h_i} \INTSeg{\tilde{v}_{\tau}^{\epsilon}(t)_{|B(0,R_{\epsilon})}}{t}{\tau}{\tau+h_i} = v_{\tau}^{\epsilon}(h_i)_{|B(0,R_{\epsilon})}. 
\end{equation*}
Besides, noting that $V(\tau,\mu_{\tau}) \subset C^0(\R^d,\R^d)$ is a compact set as a consequence of Hypotheses \ref{hyp:CI}-$(ii)$ and $(iii)$ and Theorem \ref{thm:Ascoli}, there exists some $v_{\tau}^{\epsilon} \in V(\tau,\mu_{\tau})$ for which 
\begin{equation*}
\dsf_{cc} \big( v_{\tau}^{\epsilon}(h_{i_k}^{\epsilon}) , v_{\tau}^{\epsilon} \big) ~\underset{h_{i_k}^{\epsilon} \to \, 0^+}{\longrightarrow}~ 0
\end{equation*}
along a subsequence $h_{i_k}^{\epsilon} \to 0^+$. In particular for $h_{i_k}^{\epsilon} >0$ sufficiently small, one has that
\begin{equation*}
\bigg\| \INTSeg{\tilde{v}_{\tau}^{\epsilon}(t)}{t}{\tau}{\tau+h_{i_k}^{\epsilon}} - h_{i_k}^{\epsilon} v_{\tau}^{\epsilon} \, \bigg\|_{C^0(B(0,R_{\epsilon}),\R^d)} = o_{\tau,\epsilon}(h_{i_k}^{\epsilon}), 
\end{equation*}
which together with \eqref{eq:LimsupFlowExp2} finally yields that 
\begin{equation}
\label{eq:LimsupFinal1}
\bigg| \, \Phi_{(\tau,\tau+h_{i_k}^{\epsilon})}^v(x) - x - h_{i_k}^{\epsilon} v_{\tau}^{\epsilon}(x) \, \bigg| \leq \epsilon h_{i_k}^{\epsilon} + o_{\tau,\epsilon}(h_{i_k}^{\epsilon}), 
\end{equation}
for all $x \in B(0,R_{\epsilon})$. To conclude the proof, there remains to observe that 
\begin{equation}
\label{eq:LimsupFinal2}
\Big( \Phi_{(\tau,\tau+h_{i_k}^{\epsilon})}^{v} , \Id + h_{i_k}^{\epsilon} v_{\tau}^{\epsilon} \Big)_{\raisebox{4pt}{$\scriptstyle{\sharp}$}} \mu_{\tau} \in \Gamma \Big( \mu(\tau+h_{i_k}^{\epsilon}) , (\Id + h_{i_k}^{\epsilon} v_{\tau}^{\epsilon})_{\sharp}\mu_{\tau} \Big)
\end{equation}
and to use the standard Wasserstein inequality \eqref{eq:WassEst} along with the estimate
\begin{equation}
\label{eq:LimsupFinal3}
\begin{aligned}
& \bigg( \INTDom{\big| \Phi_{(\tau,\tau+h_{i_k}^{\epsilon})}^{v}(x) - x - h_{i_k}^{\epsilon} v_{\tau}^{\epsilon}(x) \big|^p}{\big\{x \in \R^d \; \textnormal{s.t.}\; |x| \geq R_{\epsilon}\big\}}{\mu_{\tau}(x)} \bigg)^{1/p} \\
& \hspace{2.8cm} \leq (1+\Cpazo_T) \Big( \NormL{m(\cdot)}{1}{[\tau,\tau+h_{i_k}^{\epsilon}]} + h_{i_k}^{\epsilon} m(\tau) \Big) \bigg( \INTDom{(1+|x|)^p}{\big\{x \in \R^d \; \textnormal{s.t.}\; |x| \geq R_{\epsilon} \big\}}{\mu_{\tau}(x)} \bigg)^{1/p} \\
& \hspace{2cm} \leq \epsilon h_{i_k}^{\epsilon} + o_{\tau,\epsilon}(h_{i_k}^{\epsilon})
\end{aligned}
\end{equation}
which holds as a consequence of Hypothesis \ref{hyp:CI}-$(ii)$ combined with the equi-integrability bound of Proposition \ref{prop:MomentumCI}, the definition  \eqref{eq:RepsDefBis} of $R_{\epsilon} > 0$, and the fact that $\tau \in \Tcal$ is a one-sided Lebesgue point of $m(\cdot) \in L^1([0,T],\R_+)$. Thence, by merging \eqref{eq:LimsupFinal1}, \eqref{eq:LimsupFinal2} and \eqref{eq:LimsupFinal3}, one finally recovers that
\begin{equation*}
W_p \Big( \mu(\tau+h_{i_k}^{\epsilon}) , (\Id + h_{i_k}^{\epsilon} v_{\tau}^{\epsilon})_{\sharp} \mu_{\tau} \Big) \leq \epsilon h_{i_k}^{\epsilon} 
\end{equation*}
whenever $h_{i_k}^{\epsilon}>0$ is taken sufficiently small, in particular to ensure that $o_{\tau,\epsilon}(h_{i_k}^{\epsilon}) \leq \epsilon h^{\epsilon}_{i_k}$, and up to rescaling the free parameter $\epsilon > 0$ by a fixed constant. The case of an unsigned sequence $h_i \to 0$ being completely similar, this concludes the proof. 
\end{proof}

\begin{rmk}[More general assumptions for Theorem \ref{thm:Liminf} and Theorem \ref{thm:Limsup}]
While the two theorems displayed in this section have been proven under the Cauchy-Lipschtz regularity assumptions \ref{hyp:CI}, we expect them to hold under less stringent requirements, where one only asks for the local uniform continuity of the dynamics with respect to the space and measure variables in the spirit of \cite[Section 3]{ContIncPp}. For the sake of conciseness, we postpone these refinements to an ulterior work.
\end{rmk}

\begin{rmk}[Exact differential formulations for compactly supported measures]
When the initial data $\mu_{\tau} \in \Pcal_p(\R^d)$ are compactly supported, the quantitative estimates on the difference quotients derived in Theorem \ref{thm:Liminf} and Theorem \ref{thm:Limsup} can be expressed more simply in terms of metric derivatives. In this case, following e.g. \cite[Proposition 3]{ContInc}, there exists for each $r>0$ a radius $R_r > 0$ such that each $\mu(\cdot) \in \Spazo_{[0,T]}(\tau,\mu_{\tau})$ satisfies $\mu(t) \in \Pcal(B(0,R_r))$ for all times $t \in [0,T]$, whenever $\mu_{\tau} \in \Pcal(B(0,r))$. Owing to this uniform support bound, one can check that the error terms $o_{\tau,\epsilon}(h)$ in the proofs of both Theorem \ref{thm:Liminf} and Theorem \ref{thm:Limsup} become independent of $\epsilon > 0$, which consequently means that the statements of these results hold respectively for every small $h >0$ for the former, and along a subsequence $h_{i_k} \to 0$ for the latter, which are both independent of $\epsilon >0$.

In Theorem \ref{thm:Liminf}, upon making the additional assumption that the velocity sets $V(\tau,\mu_{\tau})$ are convex, it follows from Proposition \ref{prop:Topological} that the solution set $\Spazo_{[\tau,T]}(\mu_{\tau})$ is compact. Then, one can first then let $\epsilon \to 0^+$ up to considering a subsequence, and then take the limit as $h \to 0^+$ to  obtain that for every $v_{\tau} \in V(\tau,\mu_{\tau})$, there exists a solution $\mu(\cdot) \in \AC([0,T],\Pcal_p(\R^d))$ of \eqref{eq:ContIncCauchyBis} such that 
\begin{equation*}
\lim_{h \to 0^+} \tfrac{1}{h} W_p \Big( \mu(\tau+h) , (\Id + h v_{\tau})_{\sharp} \mu_{\tau} \Big) = 0. 
\end{equation*}
Similarly for Theorem \ref{thm:Limsup}, upon observing that under Hypotheses \ref{hyp:CI} the set $V(\tau,\mu_{\tau})$ is compact for the topology of local uniform convergence by the Ascoli-Arzel\`a theorem, one can let $\epsilon \to 0^+$ in conjunction with Lemma \ref{lem:ConvdccWass} to obtain that, for each solution $\mu(\cdot) \in \AC([0,T],\Pcal_p(\R^d))$ of \eqref{eq:ContIncCauchyBis} and every sequence $h_i \to 0$, there exists an element $v_{\tau} \in V(\tau,\mu_{\tau})$ such that
\begin{equation*}
\lim_{h_{i_k} \to 0} \tfrac{1}{|h_{i_k}|} W_p \Big( \mu(\tau+h_{i_k}) , (\Id + h_{i_k} v_{\tau})_{\sharp} \mu_{\tau} \Big) = 0
\end{equation*}
along a subsequence $h_{i_k} \to 0$. 
\end{rmk}


\section{Viability and invariance theorems for proper constraints sets}
\label{section:CauchyLip}
\setcounter{equation}{0} \renewcommand{\theequation}{\thesection.\arabic{equation}}

In this section, we discuss several necessary and sufficient conditions for the viability and invariance of general constraint tubes $\Qpazo : [0,T] \tto \Pcal_p(\R^d)$ under the action of the dynamics 
\begin{equation}
\label{eq:CauchyViab}
\left\{
\begin{aligned}
& \partial_t \mu(t) \in - \Div_x \Big( V(t,\mu(t)) \mu(t) \Big), \\
& \mu(\tau) = \mu_{\tau}.
\end{aligned}
\right.
\end{equation}
In this context, we will always assume that $V : [0,T] \times \Pcal_p(\R^d) \tto C^0(\R^d,\R^d)$ is a set-valued map satisfying Hypotheses \ref{hyp:CI} for some $p \in (1,+\infty)$, and that $(\tau,\mu_{\tau}) \in [0,T] \times \Pcal_p(\R^d)$ represents a some arbitrary initial condition complying with the admissibility constraint $\mu_{\tau} \in \Qpazo(\tau)$.

\begin{Def}[Viability and invariance]
\label{def:Viab}
We say that the tube $\Qpazo : [0,T] \tto \Pcal_p(\R^d)$ is \textnormal{viable} for \eqref{eq:CauchyViab} if for every $\tau \in [0,T]$ and any $\mu_{\tau} \in \Qpazo(\tau)$, there exists a solution $\mu(\cdot) \in \Spazo_{[\tau,T]}(\tau,\mu_{\tau})$ of the Cauchy problem such that 
\begin{equation*}
\mu(t) \in \Qpazo(t)
\end{equation*}
for all times $t \in [\tau,T]$. Similarly, we say $\Qpazo : [0,T] \tto \Pcal_p(\R^d)$ is \textnormal{invariant} for \eqref{eq:CauchyViab} if all the solution curves $\mu(\cdot) \in \Spazo_{[\tau,T]}(\tau,\mu_{\tau})$ satisfy $\mu(t) \in \Qpazo(t)$ for all times $t \in [\tau,T]$. 
\end{Def}

In what follows, we split the exposition of our main results into two separate parts, starting in Section \ref{subsection:ViabCauchy} with the simpler case in which the constraints sets are stationary, and then treating the more involved situation in which they are time-dependent in Section \ref{subsection:ViabCauchyAbs}.


\subsection{The case of stationary constraints}
\label{subsection:ViabCauchy}

We start our investigation of the viability and invariance under the action of continuity inclusions by considering the case in which the constraints are represented by a fixed proper subset $\Qpazo \subset \Pcal_p(\R^d)$. In this context, the crucial geometric object which allows to characterise the viability or the invariance of the latter is its \textit{contingent cone}, whose definition is inspired by that of \cite{Badreddine2022} and presented below. 

\begin{Def}[Contingent cone in Wasserstein spaces]
\label{def:Cone}
Given a closed set $\Qpazo \subset \Pcal_p(\R^d)$, we define its \textnormal{contingent cone} at some $\mu \in \Qpazo$ by 
\begin{equation*}
T_{\Qpazo}(\mu) := ~ \bigg\{ \xi \in \Lpazo^p(\R^d,\R^d;\mu) ~\, \textnormal{s.t.}~  \liminf_{h \to 0^+} \, \tfrac{1}{h} \dist_{\Pcal_p(\R^d)} \Big( (\Id + h \xi)_{\sharp} \mu \, ; \Qpazo \Big) = 0 \bigg\}.
\end{equation*}
The latter can also be characterised in terms of sequences as 
\begin{equation*}
\begin{aligned}
T_{\Qpazo}(\mu) & = \bigg\{ \xi \in \Lpazo^p(\R^d,\R^d;\mu) ~\, \textnormal{s.t.}~ \text{there exists a sequence $h_i \to 0^+$} \\
& \hspace{6.5cm} \text{for which}~ \dist_{\Pcal_p(\R^d)} \Big( (\Id + h_i \xi)_{\sharp} \mu \, ; \Qpazo \Big) = o(h_i) \bigg\}.
\end{aligned}
\end{equation*}
\end{Def}

\begin{rmk}[On the choice of defining contingent cones using Borel maps]
It is worth noting that eventhough $\Lpazo^p(\R^d,\R^d;\mu)$ is merely a seminormed space for any $\mu \in \Pcal_p(\R^d)$, the contingent cones introduced in Definition \ref{def:Cone} are closed for the convergence induced by the pseudometric $\NormLp{\cdot}{p}{\R^d,\R^d;\,\mu}$. Indeed, while the limit of a sequence $(\xi_n) \subset T_{\Qpazo}(\mu)$ should be a $\mu$-measurable map by construction (see e.g. \cite[Definition 1.12]{AmbrosioFuscoPallara}), the latter always coincides with a Borel function outside of a Borel set with zero $\mu$-measure by \cite[Proposition 2.1.11]{Bogachev}. In what ensues, the closures of the convex hulls of contingent cones will therefore always be understood in with respect to the pseudometric $\NormLp{\cdot}{p}{\R^d,\R^d;\,\mu}$.
\end{rmk}

By leveraging this notion, we are able to prove the following sufficient viability conditions for proper time-independent constraints sets, which is one of our main contributions. We chose to start by presenting this latter separately, as it thoroughly illustrates the main ideas supporting the more general results of Section \ref{subsection:ViabCauchyAbs} a simpler setting.

\begin{thm}[Sufficient viability conditions for stationary constraints]
\label{thm:ViabStationary}
Suppose that $p \in (1,+\infty)$, let $V : [0,T] \times \Pcal_p(\R^d) \tto C^0(\R^d,\R^d)$ be a set-valued map with convex images satisfying Hypotheses \ref{hyp:CI} and $\Qpazo \subset \Pcal_p(\R^d)$ be a proper set such that 
\begin{equation}
\label{eq:ViabCompactHyp}
V(t,\nu) \cap \co T_{\Qpazo}(\nu) \neq \emptyset 
\end{equation}
for $\Lcal^1$-almost every $t \in [0,T]$ and each $\nu \in \Qpazo$. Then $\Qpazo$ is viable for \eqref{eq:CauchyViab}. 
\end{thm}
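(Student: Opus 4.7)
The approach is an approximation scheme combined with compactness. Fix $\tau \in [0,T]$ and $\mu_\tau \in \Qpazo$. For each $\epsilon > 0$, I would construct an admissible trajectory $\mu_\epsilon(\cdot) \in \Spazo_{[\tau,T]}(\tau,\mu_\tau)$ satisfying $\dist_{\Pcal_p(\R^d)}(\mu_\epsilon(t) \, ; \Qpazo) \leq \epsilon$ for every $t \in [\tau,T]$. Once these exist, the conclusion follows easily: by Proposition \ref{prop:Topological}, the convexity of $V$ makes $\Spazo_{[\tau,T]}(\tau,\mu_\tau)$ compact for uniform convergence, so up to a subsequence $\mu_\epsilon \to \mu \in \Spazo_{[\tau,T]}(\tau,\mu_\tau)$ uniformly on $[\tau,T]$. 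Since $\Qpazo$ is closed in $\Pcal_p(\R^d)$, passing to the limit yields $\mu(t) \in \Qpazo$ on $[\tau,T]$, which is the desired viability.

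To build $\mu_\epsilon$, I would employ a Euler-type scheme on a fine partition $\tau = t_0 < t_1 < \dots < t_N = T$, with each $t_k$ chosen in the full-measure set $\Tcal$ provided by Theorem \ref{thm:Liminf}. Assuming inductively that $\mu_\epsilon$ has been built on $[\tau,t_k]$ with $\mu_\epsilon(t_k)$ close to some $\tilde\mu_k \in \Qpazo$, the tangency hypothesis furnishes an admissible direction $\bar v_k \in V(t_k,\tilde\mu_k) \cap \co T_\Qpazo(\tilde\mu_k)$. One then approximates $\bar v_k$ in $\Lpazo^p(\R^d,\R^d;\tilde\mu_k)$ by a finite convex combination $\sum_i \alpha_i^k \xi_i^k$ of elements $\xi_i^k \in T_\Qpazo(\tilde\mu_k)$, and invokes the contingent-cone property to select a common small step $h_k > 0$ along which $(\Id+h_k\xi_i^k)_\sharp \tilde\mu_k$ lies within $\epsilon h_k$ of some $\nu_i^k \in \Qpazo$. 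Combining these ingredients, I would construct a ``candidate'' curve on $[t_k,t_k+h_k]$ that ends near a point of $\Qpazo$ while being driven by a velocity whose time-average essentially recovers $\bar v_k$. The Filippov estimate of Theorem \ref{thm:LocalFilippov}, applied to this candidate, returns a genuine admissible arc extending $\mu_\epsilon$ to $[\tau,t_k+h_k]$ with controlled distance to $\Qpazo$, and the semigroup property \eqref{eq:SemigroupReach} enables one to concatenate these steps across the whole interval.

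The main obstacle is precisely the inductive step just described: a convex combination of tangent vectors is in general \emph{not} a tangent vector, so $(\Id+h\bar v_k)_\sharp \tilde\mu_k$ need not be close to $\Qpazo$. Circumventing this requires a relaxation-type argument in which the directions $\xi_i^k$ are realised sequentially over sub-intervals of proportional length, producing a trajectory whose endpoint lies near $\Qpazo$ while its ``averaged'' velocity remains in the convex admissible set $V(t_k,\tilde\mu_k)$ so that Filippov applies. A further delicate point is the cumulative propagation of errors across steps, which must be absorbed by the Gr\"onwall-type content of Theorem \ref{thm:LocalFilippov} together with the equi-integrability bound \eqref{eq:UnifIntegCI}; here the hypothesis that $\Qpazo$ is proper, combined with the compactness criterion of Proposition \ref{prop:Wass}, guarantees that the reference points $\tilde\mu_k$ can be taken from a relatively compact subset of $\Qpazo$, which is needed for uniform control in the tangent-cone selection and in the final extraction of a limiting viable trajectory.
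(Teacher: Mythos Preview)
Your Euler-scheme strategy is the classical template for viability, but the inductive step contains a genuine gap that is not easily repaired. You write that after approximating $\bar v_k\in V(t_k,\tilde\mu_k)\cap\co T_\Qpazo(\tilde\mu_k)$ by a finite convex combination $\sum_i\alpha_i^k\xi_i^k$ with $\xi_i^k\in T_\Qpazo(\tilde\mu_k)$, one ``invokes the contingent-cone property to select a common small step $h_k>0$'' working for all $i$. This is unjustified: the contingent cone gives, for each $\xi_i^k$ separately, a sequence $h_{i,n}\to 0^+$ along which the distance to $\Qpazo$ is $o(h_{i,n})$, and these sequences are in general disjoint. Your fallback of realising the $\xi_i^k$ sequentially on sub-intervals and then appealing to Filippov also fails: Theorem~\ref{thm:LocalFilippov} controls the admissible arc through the \emph{pointwise} mismatch $\dist_{C^0}(w(t);V(t,\nu(t)))$, and the tangent directions $\xi_i^k$ have no reason to be near $V(\cdot,\cdot)$, so this mismatch is uncontrolled. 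An averaged-velocity argument does not rescue this, since Filippov does not see averages.

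The paper circumvents the whole difficulty by working with the distance function $g(t)=\dist_{\Pcal_p(\R^d)}(\Rpazo_{(0,t)}(\mu^0);\Qpazo)$ rather than with individual approximate trajectories. After showing $g$ is absolutely continuous (Proposition~\ref{prop:AC} and Lemma~\ref{lem:ACReach}), one fixes a differentiability point $\tau$ with $g(\tau)>0$, picks realisers $\mu_\tau\in\Rpazo_{(0,\tau)}(\mu^0)$, $\nu_\tau\in\Qpazo$, and uses the superdifferentiability of $W_p^p$ (Proposition~\ref{prop:SuperdiffWass}) together with Theorem~\ref{thm:Liminf} to obtain a \emph{linear} inequality $\int\langle\xi_\tau(y),j_p(x-y)\rangle\,\mathrm d\gamma_\tau\le 0$ for every $\xi_\tau\in T_\Qpazo(\nu_\tau)$. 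Linearity and continuity in $\xi_\tau$ propagate this to $\co T_\Qpazo(\nu_\tau)$ for free, and combining with Hypotheses~\ref{hyp:CI}-$(iii)$, $(iv)$ and the tangency assumption yields $\dot g(\tau)\le (l(\tau)+L(\tau))g(\tau)$, hence $g\equiv 0$ by Gr\"onwall. The point is that the convexification happens \emph{after} linearisation, at the level of a scalar pairing, not at the level of curves in $\Pcal_p(\R^d)$; this is precisely what your scheme lacks. Only once $\Rpazo_{(0,t)}(\mu^0)\cap\Qpazo\neq\emptyset$ is known does the paper run a dyadic-partition plus compactness extraction, which is close in spirit to the limiting step you propose.
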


In the proof of this theorem and several others in the manuscript, we will extensively use the following regularity property of the reachable sets. 

\begin{lem}[Regularity in time of the reachable sets]
\label{lem:ACReach}
If $V : [0,T] \times \Pcal_p(\R^d) \tto C^0(\R^d,\R^d)$ is a set-valued map with convex images satisfying Hypotheses \ref{hyp:CI}, then the reachable maps $t \in [\tau,T] \tto \Rpazo_{(\tau,t)}(\mu_{\tau}) \subset \Pcal_p(\R^d)$ are absolutely continuous in the Hausdorff metric for all $(\tau,\mu_{\tau}) \in [0,T] \times \Pcal_p(\R^d)$.  
\end{lem}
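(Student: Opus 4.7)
The plan is to reduce the statement to the uniform absolute continuity estimate on trajectories provided by Proposition \ref{prop:MomentumCI}, exploiting the fact that every solution curve witnesses two points in the reachable sets at different times simultaneously.

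First, I would fix $(\tau,\mu_\tau) \in [0,T] \times \Pcal_p(\R^d)$ and note that under Hypotheses \ref{hyp:CI} together with the convexity of the images of $V$, the reachable sets $\Rpazo_{(\tau,t)}(\mu_\tau) \subset \Pcal_p(\R^d)$ are nonempty and compact for every $t \in [\tau,T]$, by Proposition \ref{prop:Topological}. This ensures that the Hausdorff distance $\dsf_{\Hpazo}(\Rpazo_{(\tau,t_1)}(\mu_\tau),\Rpazo_{(\tau,t_2)}(\mu_\tau))$ is well-defined and finite for all $t_1,t_2 \in [\tau,T]$. My goal will be to show that
\begin{equation*}
\dsf_{\Hpazo}\Big( \Rpazo_{(\tau,t_1)}(\mu_\tau) , \Rpazo_{(\tau,t_2)}(\mu_\tau) \Big) \leq (1 + \Cpazo_T) \INTSeg{m(s)}{s}{t_1}{t_2}
\end{equation*}
for all $\tau \leq t_1 \leq t_2 \leq T$, where $m(\cdot) \in L^1([0,T],\R_+)$ and $\Cpazo_T > 0$ are as in Hypothesis \ref{hyp:CI}-$(ii)$ and Proposition \ref{prop:MomentumCI}, respectively. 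Setting $m_{\Rpazo}(\cdot) := (1+\Cpazo_T) m(\cdot) \in L^1([0,T],\R_+)$ will then yield the desired absolute continuity in the Hausdorff metric.

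To establish the inclusion $\Rpazo_{(\tau,t_1)}(\mu_\tau) \subset \B_{\Pcal_p(\R^d)} \big( \Rpazo_{(\tau,t_2)}(\mu_\tau), \int_{t_1}^{t_2} m_{\Rpazo}(s) \dn s \big)$, I would pick an arbitrary $\mu_1 \in \Rpazo_{(\tau,t_1)}(\mu_\tau)$ and, by definition, choose a solution $\mu(\cdot) \in \Spazo_{[0,T]}(\tau,\mu_\tau)$ with $\mu(t_1) = \mu_1$. Since this curve is defined on the whole interval $[\tau,T]$, the point $\mu(t_2)$ is automatically an element of $\Rpazo_{(\tau,t_2)}(\mu_\tau)$, and the a priori estimate \eqref{eq:ACCI} of Proposition \ref{prop:MomentumCI} gives precisely
\begin{equation*}
W_p(\mu_1,\mu(t_2)) = W_p(\mu(t_1),\mu(t_2)) \leq (1+\Cpazo_T) \INTSeg{m(s)}{s}{t_1}{t_2}.
\end{equation*}
The reverse inclusion follows by exactly the same argument: given $\mu_2 \in \Rpazo_{(\tau,t_2)}(\mu_\tau)$ and an associated solution $\mu(\cdot) \in \Spazo_{[0,T]}(\tau,\mu_\tau)$ with $\mu(t_2) = \mu_2$, the point $\mu(t_1)$ lies in $\Rpazo_{(\tau,t_1)}(\mu_\tau)$ and is within the same Wasserstein distance of $\mu_2$.

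There is no real obstacle here; the work has already been done in Propositions \ref{prop:MomentumCI} and \ref{prop:Topological}. The only point that requires a little care is making sure that both families of estimates are produced by the \emph{same} solution curve at both time instants, which exploits the fact that solutions in $\Spazo_{[0,T]}(\tau,\mu_\tau)$ are defined on the entire interval $[\tau,T]$ rather than on a proper sub-interval, so that every element of a reachable set at any time $t \in [\tau,T]$ extends both forward and backward along an admissible trajectory.
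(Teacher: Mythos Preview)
Your proposal is correct and follows essentially the same approach as the paper: both arguments pick an arbitrary element of $\Rpazo_{(\tau,t_i)}(\mu_\tau)$, realise it as the value at $t_i$ of a trajectory $\mu(\cdot)\in\Spazo_{[0,T]}(\tau,\mu_\tau)$, and then invoke the uniform absolute continuity estimate \eqref{eq:ACCI} to control $W_p(\mu(t_1),\mu(t_2))$. Your explicit mention of compactness of the reachable sets (to justify that the Hausdorff distance is well-defined) is a small addition not spelled out in the paper's proof, but otherwise the two arguments are identical.
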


\begin{proof}
Let $t_1,t_2 \in [\tau,T]$ be such that $\tau \leq t_1 < t_2 \leq T$, and  observe that for each $\mu_{t_2} \in \Rpazo_{(\tau,t_2)}(\mu_{\tau})$, there exists some $\mu_{t_1} \in \Rpazo_{(\tau,t_1)}(\mu_{\tau})$ such that $\mu_{t_2} \in \Rpazo_{(t_1,t_2)}(\mu_{t_1})$. Thence, it follows from the regularity estimate \eqref{eq:ACCI} of Proposition \ref{prop:MomentumCI} that 
\begin{equation*}
\dist_{\Pcal_p(\R^d)} \Big( \mu_{t_2} \, ; \Rpazo_{(\tau,t_1)}(\mu_{\tau}) \Big) \leq W_p(\mu_{t_2},\mu_{t_1}) \leq (1+\Cpazo_T) \INTSeg{m(s)}{s}{t_1}{t_2} 
\end{equation*}
for some constant $\Cpazo_T >0$ that only depends on the magnitudes of $p,\Mpazo(\mu_{\tau})$ and $\Norm{m(\cdot)}_1$. Analogously, it can be shown that 
\begin{equation*}
\dist_{\Pcal_p(\R^d)} \Big( \mu_{t_1} \, ; \Rpazo_{(\tau,t_2)}(\mu_{\tau}) \Big) \leq (1+\Cpazo_T) \INTSeg{m(s)}{s}{t_1}{t_2},
\end{equation*}
for every $\mu_{t_1} \in \Rpazo_{(\tau,t_1)}(\mu_{\tau})$. By combining both estimates while remarking that 
\begin{equation*}
\begin{aligned}
\dsf_{\Hpazo} \Big( \Rpazo_{(\tau,t_1)}(\mu_{\tau}) , \Rpazo_{(\tau,t_2)}(\mu_{\tau}) \Big) = \max \bigg\{ & \sup \Big\{ \dist_{\Pcal_p(\R^d)} \Big( \mu_{t_1} \, ; \Rpazo_{(\tau,t_2)}(\mu_{\tau}) \Big) ~\,\textnormal{s.t.}~ \mu_{t_1} \in \Rpazo_{(\tau,t_1)}(\mu_{\tau})  \Big\} , \\
& \sup \Big\{ \dist_{\Pcal_p(\R^d)} \Big( \mu_{t_2} \, ; \Rpazo_{(\tau,t_1)}(\mu_{\tau}) \Big) ~\,\textnormal{s.t.}~ \mu_{t_2} \in \Rpazo_{(\tau,t_2)}(\mu_{\tau})  \Big\} \bigg\},
\end{aligned}
\end{equation*}
it then holds that 
\begin{equation*}
\dsf_{\Hpazo} \Big( \Rpazo_{(\tau,t_1)}(\mu_{\tau}) , \Rpazo_{(\tau,t_2)}(\mu_{\tau}) \Big) \leq (1+\Cpazo_T) \INTSeg{m(s)}{s}{t_1}{t_2}, 
\end{equation*}
which concludes the proof.
\end{proof}

This technical result being established, we can move on to the proof of Theorem \ref{thm:ViabStationary}.

\begin{proof}[Proof of Theorem \ref{thm:ViabStationary}]
In what follows, we assume without loss of generality that $(\tau,\mu_{\tau}) = (0,\mu^0)$ in \eqref{eq:CauchyViab} for some $\mu^0 \in \Qpazo$. Our goal will be to show that the function measuring the distance between $\Qpazo$ and the reachable sets, which is defined for all times $t \in [0,T]$ by 
\begin{equation*}
g(t) := \dist_{\Pcal_p(\R^d)} \Big( \Rpazo_{(0,t)}(\mu^0) \, ; \Qpazo \Big),
\end{equation*}
is identically equal to zero. By Proposition \ref{prop:AC} and Lemma \ref{lem:ACReach}, it holds that $g(\cdot) \in \AC([0,T],\R_+)$, and we denote by $\Dcal \subset (0,T)$ the set of full $\Lcal^1$-measure over which it is differentiable. Note also that the set $\cup_{t \in [0,T]} \Rpazo_{(0,t)}(\mu^0) \subset \Pcal_p(\R^d)$ is compact by Proposition \ref{prop:Topological} and Lemma \ref{lem:ACReach}. Since $\mu^0 \in \Qpazo$, it is possible to choose a radius $R > 0$ such that 
\begin{equation}
\label{eq:RViabDef}
\dist_{\Pcal_p(\R^d)} \Big( \Rpazo_{(0,t)}(\mu^0) \, ; \partial \B_{\Pcal_p(\R^d)}(\mu^0,R)\Big) \geq \dist_{\Pcal_p(\R^d)} \Big( \Rpazo_{(0,t)}(\mu^0) \, ; \Qpazo \Big) + 1
\end{equation}
for all times $t \in [0,T]$, by following e.g. the arguments detailed in Appendix \ref{section:AppendixAC}. Therefore, the set $\Qpazo_R := \Qpazo \cap \B_{\Pcal_p(\R^d)}(\mu^0,R)$, which is nonempty as well as compact by construction, is such that 
\begin{equation*}
g(t) = \dist_{\Pcal_p(\R^d)} \Big( \Rpazo_{(0,t)}(\mu^0) \, ; \Qpazo_R \Big)
\end{equation*}
for all times $t \in [0,T]$. 


\paragraph*{Step 1 -- A Gr\"onwall estimate on the distance function.}

In this first step, we show by contradiction that the map $g : [0,T] \to \R_+$ is identically equal to zero. Otherwise, since $g(0) = 0$, there should exist a time $t \in [0,T)$ and some $\delta > 0$ such that $g(t) = 0$ and $g(\tau) > 0$ for $\tau \in (t,t+\delta)$. 

Let $\Tcal \subset (0,T)$ be the subset of full $\Lcal^1$-measure over which the statement of Theorem \ref{thm:Liminf} and Hypotheses \ref{hyp:CI}-$(ii)$, $(iii)$ and $(iv)$ hold, and fix an element $\tau \in (t,t+\delta) \cap \Tcal \cap \Dcal$. Since $\Rpazo_{(0,\tau)}(\mu^0)$ and $\Qpazo_R$ are both compact, one has that
\begin{equation}
\label{eq:ViabCompact0}
g(\tau) = W_p(\mu_{\tau},\nu_{\tau}) 
\end{equation}
for some $\mu_{\tau} \in \Rpazo_{(0,\tau)}(\mu^0)$ and $\nu_{\tau} \in \Qpazo_R$. Moreover, recalling that $R>0$ is defined in such a way that \eqref{eq:RViabDef} is satisfied, it necessarily holds 
\begin{equation*}
\nu_{\tau} \in \Qpazo \cap \textnormal{int} \Big( \B_{\Pcal_p(\R^d)}(\mu^0,R) \Big),
\end{equation*}
which implies in particular that $T_{\Qpazo_R}(\nu_{\tau}) = T_{\Qpazo}(\nu)$. Thence, by using the definition of contingent cones provided in Definition \ref{def:Cone}, there exists for each $\xi_{\tau} \in T_{\Qpazo}(\nu_{\tau})$ a sequence $h_i \to 0^+$ such that 
\begin{equation}
\label{eq:ViabCompact1}
\begin{aligned}
W_p \Big(\mu_{\tau}, (\Id + h_i \xi_{\tau})_{\sharp} \nu_{\tau} \Big) & \geq \dist_{\Pcal_p(\R^d)} \big( \mu_{\tau} \, ; \Qpazo_R \big) + o_{\tau}(h_i) \\ 
& = W_p(\mu_{\tau},\nu_{\tau}) + o_{\tau}(h_i).
\end{aligned}
\end{equation}
Besides, by the directional superdifferentiability property of Proposition \ref{prop:SuperdiffWass} above, one further has
\begin{equation}
\label{eq:ViabCompact2}
\tfrac{1}{p} W_p^p \Big(\mu_{\tau}, (\Id + h_i \xi_{\tau})_{\sharp} \nu_{\tau} \Big) - \tfrac{1}{p} W_p^p(\mu_{\tau},\nu_{\tau}) \leq h_i \INTDom{\big\langle \xi_{\tau}(y) , j_p(y-x) \big\rangle}{\R^{2d}}{\gamma_{\tau}(x,y)} + o_{\tau}(h_i)
\end{equation}
for any small $h_i > 0$ and every $\gamma_{\tau} \in \Gamma_o(\mu_{\tau},\nu_{\tau})$. Hence, by combining the estimates of \eqref{eq:ViabCompact1} and \eqref{eq:ViabCompact2}, dividing the resulting expression by $h_i > 0$ and letting $h_i \to 0^+$, one obtains the following inequality
\begin{equation}
\label{eq:ViabCompact3}
\INTDom{\big\langle \xi_{\tau}(y) , j_p(x-y) \big\rangle}{\R^{2d}}{\gamma_{\tau}(x,y)} \leq 0, 
\end{equation}
which holds for every $\xi_{\tau} \in T_{\Qpazo}(\nu_{\tau})$ and each $\gamma_{\tau} \in \Gamma_o(\mu_{\tau},\nu_{\tau})$. 

Observe now that by Theorem \ref{thm:Liminf}, there exists for any $\epsilon > 0$ and every $v_{\tau} \in V(\tau,\mu_{\tau})$ a curve of measures $\mu_{\epsilon}(\cdot) \in \Spazo_{[0,T]}(\tau,\mu_{\tau})$ such that 
\begin{equation}
\label{eq:ViabCompact31}
W_p \Big( \mu_{\epsilon}(\tau+h) , (\Id + h v_{\tau})_{\sharp} \mu_{\tau} \Big) \leq  \epsilon h, 
\end{equation}
whenever $h > 0$ is sufficiently small. Furthermore, since $\mu_{\epsilon}(\tau+h) \in \Rpazo_{(\tau,\tau+h)}(\mu_{\tau}) \subset \Rpazo_{(0,\tau+h)}(\mu^0)$, one can estimate from above the forward difference quotient of $\tfrac{1}{p} g^p(\cdot)$ at $\tau \in (t,t+\delta) \cap \Tcal \cap \Dcal$ as
\begin{equation}
\label{eq:ViabCompact4}
\tfrac{1}{p} g^p(\tau + h) - \tfrac{1}{p} g^p(\tau) \leq \tfrac{1}{p} W_p^p \big( \mu_{\epsilon}(\tau+h) , \nu_{\tau} \big) - \tfrac{1}{p}  W_p^p( \mu_{\tau} , \nu_{\tau}).
\end{equation}
Besides, assuming without loss of generality that $\epsilon,h \in (0,1]$ and noting that $v_{\tau} \in \Lpazo^p(\R^d,\R^d;\mu_{\tau})$ by Hypothesis \ref{hyp:CI}-$(ii)$, it can be deduced from \eqref{eq:ViabCompact31} along with the estimates of Lemma \ref{lem:PnormEst} below that 
\begin{equation}
\label{eq:ViabCompact4Bis}
\begin{aligned}
\tfrac{1}{p} W_p^p \big( \mu_{\epsilon}(\tau+h) , \nu_{\tau} \big) & - \tfrac{1}{p} W_p^p \Big( (\Id + h v_{\tau})_{\sharp} \mu_{\tau} , \nu_{\tau} \Big) \\
& \leq W_p^{p-1} \Big( (\Id + h v_{\tau})_{\sharp} \mu_{\tau} , \nu_{\tau} \Big) \bigg( W_p \big( \mu_{\epsilon}(\tau+h) , \nu_{\tau} \big) -W_p  \Big( (\Id + h v_{\tau})_{\sharp} \mu_{\tau} , \nu_{\tau} \Big) \bigg) \\
& \hspace{0.45cm} + C_p \Big| W_p \big( \mu_{\epsilon}(\tau+h) , \nu_{\tau} \big) -W_p  \Big( (\Id + h v_{\tau})_{\sharp} \mu_{\tau} , \nu_{\tau} \Big)\Big|^{\min\{p,2\}} \\
& \leq C_p' \bigg( W_p \Big( \mu_{\epsilon}(\tau+h) , (\Id + h v_{\tau})_{\sharp} \mu_{\tau} \Big) + W_p \Big( \mu_{\epsilon}(\tau+h) , (\Id + h v_{\tau})_{\sharp} \mu_{\tau} \Big)^{\min\{p,2\}} \bigg) \\
& \leq C_p' \epsilon h + o_{\tau,\epsilon}(h)
\end{aligned}
\end{equation}
for $h > 0$ sufficiently small, and where the constants $C_p,C_p' > 0$ only depend on the magnitudes of $p,\Mpazo(\mu_{\tau}),\Mpazo(\nu_{\tau})$, $\Norm{m(\cdot)}_1$ and $\NormLp{v_{\tau}}{p}{\R^d,\R^d ; \, \mu_{\tau}}$. Thus, by merging the estimate of \eqref{eq:ViabCompact4Bis} with \eqref{eq:ViabCompact4}, one further obtains up to rescaling $\epsilon >0$ by a positive constant that
\begin{equation}
\label{eq:ViabCompact41}
\tfrac{1}{p} g^p(\tau + h) - \tfrac{1}{p} g^p(\tau) \leq \tfrac{1}{p} W_p^p \Big( (\Id + h v_{\tau})_{\sharp} \mu_{\tau} , \nu_{\tau} \Big) - \tfrac{1}{p} W_p^p(\mu_{\tau},\nu_{\tau}) + \epsilon h + o_{\tau,\epsilon}(h)
\end{equation}
Besides, it follows again from the directional superdifferentiability property of Proposition \ref{prop:SuperdiffWass} that 
\begin{equation}
\label{eq:ViabCompact42}
\tfrac{1}{p} W_p^p \Big( (\Id + h v_{\tau})_{\sharp} \mu_{\tau} , \nu_{\tau} \Big) - \tfrac{1}{p}  W_p^p( \mu_{\tau} , \nu_{\tau}) \leq h \INTDom{\big\langle v_{\tau}(x) , j_p(x-y) \big\rangle}{\R^{2d}}{\gamma_{\tau}(x,y)} + o_{\tau,\epsilon}(h) 
\end{equation}
for each optimal transport plan $\gamma_{\tau} \in \Gamma_o(\mu_{\tau},\nu_{\tau})$. Therefore, by combining the estimates of \eqref{eq:ViabCompact41} and \eqref{eq:ViabCompact42}, dividing the resulting expression by $h>0$ and then letting $h \to 0^+$ while recalling that $\tfrac{1}{p} g^p(\cdot)$ is differentiable at $ \tau \in \Dcal$ and that $\epsilon > 0$ is arbitrary, we obtain the differential inequality
\begin{equation}
\label{eq:ViabCompact5}
g^{p-1}(\tau) \dot g(\tau) \leq \INTDom{\big\langle v_{\tau}(x) , j_p(x-y) \big\rangle}{\R^{2d}}{\gamma_{\tau}(x,y)}
\end{equation}
which holds for every time $\tau \in (t,t+\delta) \cap \Tcal \cap \Dcal$.

Our goal in what follows is to use differential estimate derived in \eqref{eq:ViabCompact5} to show that $g(\cdot)$ vanishes identically on $(t,t+\delta)$. By inserting crossed terms in the latter expression, one can easily check that
\begin{equation}
\label{eq:ViabCompact61}
\begin{aligned}
g^{p-1}(\tau) \dot g(\tau) & \leq \INTDom{\big\langle v_{\tau}(x) - v_{\tau}(y) , j_p(x-y) \big\rangle}{\R^{2d}}{\gamma_{\tau}(x,y)} \\
& \hspace{0.5cm} + \INTDom{\big\langle v_{\tau}(y) - \xi_{\tau}(y) , j_p(x-y) \big\rangle}{\R^{2d}}{\gamma_{\tau}(x,y)} \\
& \hspace{0.5cm} + \INTDom{\big\langle \xi_{\tau}(y) , j_p(x-y) \big\rangle}{\R^{2d}}{\gamma_{\tau}(x,y)} \\
& \leq l(\tau) g^p(\tau) +  \INTDom{\big\langle v_{\tau}(y) - \xi_{\tau}(y) , j_p(x-y) \big\rangle}{\R^{2d}}{\gamma_{\tau}(x,y)}
\end{aligned}
\end{equation}
where we resorted to \eqref{eq:ViabCompact3} and Hypothesis \ref{hyp:CI}-$(iii)$, as well as to the elementary observation that
\begin{equation*}
\INTDom{|x-y| |j_p(x-y)|}{\R^{2d}}{\gamma_{\tau}(x,y)} = \INTDom{|x-y|^p}{\R^{2d}}{\gamma_{\tau}(x,y)} = g^p(\tau)
\end{equation*}
which follows from \eqref{eq:ViabCompact0} together with the fact that $\gamma_{\tau} \in \Gamma_o(\mu_{\tau},\nu_{\tau})$. Recall now that as a consequence of Hypothesis \ref{hyp:CI}-$(iv)$, there exists for each $w_{\tau} \in V(\tau,\nu_{\tau})$ an element $v_{\tau} \in V(\tau,\mu_{\tau})$ such that 
\begin{equation*}
\dsf_{\sup}(v_{\tau},w_{\tau}) \leq L(\tau) W_p(\mu_{\tau},\nu_{\tau}).
\end{equation*}
This, together with the fact that the estimates in \eqref{eq:ViabCompact61} hold for every $v_{\tau} \in V(\tau,\mu_{\tau})$, further yields
\begin{equation}
\label{eq:ViabCompact62}
g^{p-1}(\tau) \dot g(\tau) \leq \Big( l(\tau) + L(\tau) \Big) g^p(\tau) + \INTDom{\big\langle w_{\tau}(y) - \xi_{\tau}(y) , j_p(x-y) \big\rangle}{\R^{2d}}{\gamma_{\tau}(x,y)}
\end{equation}
for all times $\tau \in (t,t+\delta) \cap \Tcal \cap \Dcal$, each $\xi_{\tau} \in T_{\Qpazo}(\nu_{\tau})$ and every $w_{\tau} \in V(\tau,\nu_{\tau})$. Noting that the right-hand side of the previous expression is linear and strongly continuous with respect to $\xi_{\tau} \in \Lpazo^p(\R^d,\R^d;\nu_{\tau})$, one gets that \eqref{eq:ViabCompact62} also holds for all $\xi_{\tau} \in \co T_{\Qpazo}(\nu_{\tau})$. Thus, by choosing 
\begin{equation*}
\xi_{\tau} = w_{\tau} \in V(\tau,\nu_{\tau}) \cap \co T_{\Qpazo}(\nu_{\tau}),
\end{equation*}
where the intersection is nonempty as a consequence of our standing assumption \eqref{eq:ViabCompactHyp}, one finally has
\begin{equation*}
\dot g(\tau) \leq \Big( l(\tau) + L(\tau) \Big) g(\tau)
\end{equation*}
for $\Lcal^1$-almost every $\tau \in (t,t+\delta)$. As we assumed that $g(t) = 0$, a direct application of Gr\"onwall's lemma yields that $g(\tau) = 0$ for all times $\tau \in (t,t+\delta)$, thus leading to a contradiction.


\paragraph*{Step 2 -- Existence of a viable curve.}

In the first step of the proof, we have shown without loss of generality that 
\begin{equation}
\label{eq:DistZero}
\dist_{\Pcal_p(\R^d)} \Big( \Rpazo_{(\tau,t)}(\mu_{\tau}) \, ; \Qpazo_R \Big) = 0,
\end{equation}
which equivalently means that $\Rpazo_{(\tau,t)}(\mu_{\tau}) \cap \Qpazo \neq \emptyset$ for all times  $0 \leq \tau \leq t \leq T-\tau$ and each $\mu_{\tau} \in \Rpazo_{(0,\tau)}(\mu^0) \cap \Qpazo_R$. Given an integer $n \geq 1$, consider the following dyadic subdivision $[0,T] := \cup_{k=0}^{2^n-1}[ t_k,t_{k+1}]$ of the time interval, wherein $t_k := T k/2^n$ for $k \in \{0,\dots,2^n-1\}$. By inductively leveraging \eqref{eq:DistZero} along with the semigroup property \eqref{eq:SemigroupReach} of the reachable sets, we can build for each $n \geq 1$ a curve $\mu_n(\cdot) \in \Spazo_{[0,T]}(0,\mu^0)$ that is such that 
\begin{equation}
\label{eq:InclusionCompact}
\mu_n(t_k) \in \Qpazo
\end{equation}
for each $k \in \{ 0,\dots,2^n-1 \}$. At this stage recall that, as a consequence of Proposition \ref{prop:Topological}, the solution set $\Spazo_{[0,T]}(0,\mu^0)$ is compact for the topology of uniform convergence, so that 
\begin{equation*}
\sup_{t \in [0,T]} W_p(\mu_{n_j}(t),\mu(t)) ~\underset{n_j \to +\infty}{\longrightarrow}~ 0
\end{equation*}
for some $\mu(\cdot) \in \Spazo_{[0,T]}(\mu^0)$ and along a subsequence $(\mu_{n_j}(\cdot)) \subset \AC([0,T],\Pcal_p(\R^d))$. In particular, it then follows from \eqref{eq:InclusionCompact} that 
\begin{equation*}
\mu \big( \tfrac{k T}{2^m} \big) \in \Qpazo, 
\end{equation*}
for every integer $m \geq 1$ and each $k \in \{0,\dots,2^m-1\}$. From there, we conclude by a classical density argument that $\mu(t) \in \Qpazo$ for all times $t \in [0,T]$.
\end{proof} 

To complement the sufficient viability conditions stated in Theorem \ref{thm:ViabStationary}, we derive below necessary viability conditions which also involve the contingent cone to the constraints. 

\begin{thm}[Necessary viability conditions for stationary constraints]
\label{thm:ViabStationaryNesc}
Suppose that $p \in (1,+\infty)$, let $V : [0,T] \times \Pcal_p(\R^d) \tto C^0(\R^d,\R^d)$ be a set-valued map with convex images satisfying Hypotheses \ref{hyp:CI} and $\Qpazo \subset \Pcal_p(\R^d)$ be a proper set. Then if $\Qpazo$ is viable for \eqref{eq:CauchyViab}, it necessarily holds that
\begin{equation*}
V(t,\nu) \cap T_{\Qpazo}(\nu) \neq \emptyset
\end{equation*}
for $\Lcal^1$-almost every $t \in [0,T]$ and each $\nu \in \Qpazo$. 
\end{thm}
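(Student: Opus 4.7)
The strategy is to extract a tangent admissible velocity directly from the infinitesimal behaviour of the viable solutions, using Theorem \ref{thm:Limsup} as the key tool and then passing to a limit as its approximation parameter tends to zero. Concretely, I would fix $\tau$ in the subset $\Tcal \subset (0,T)$ of full $\Lcal^1$-measure provided by Theorem \ref{thm:Limsup}, together with an arbitrary $\nu \in \Qpazo$. By the viability assumption on $\Qpazo$, there exists a solution $\mu(\cdot) \in \Spazo_{[\tau,T]}(\tau,\nu)$ such that $\mu(t) \in \Qpazo$ for all $t \in [\tau,T]$, which plays the role of a ``witness'' trajectory. Note that the set $\Tcal$ is independent of $\nu$ by the statement of Theorem \ref{thm:Limsup}, which is crucial since the conclusion must hold simultaneously at every $\nu \in \Qpazo$.

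Next, I would run Theorem \ref{thm:Limsup} on $\mu(\cdot)$ with a fixed base sequence $h_i \to 0^+$ and a sequence of tolerances $\epsilon_n \to 0^+$. This yields for every $n \geq 1$ an element $v_{\tau}^{\epsilon_n} \in V(\tau,\nu)$ and a subsequence $h_{i_k}^{\epsilon_n} \to 0^+$ along which
\begin{equation*}
W_p \Big( \mu(\tau + h_{i_k}^{\epsilon_n}) \, , \, (\Id + h_{i_k}^{\epsilon_n} v_{\tau}^{\epsilon_n})_{\sharp} \nu \Big) \leq \epsilon_n h_{i_k}^{\epsilon_n}.
\end{equation*}
Because $\mu(\tau + h_{i_k}^{\epsilon_n}) \in \Qpazo$ by viability, this immediately transfers into the constraint-side estimate
\begin{equation*}
\dist_{\Pcal_p(\R^d)} \Big( (\Id + h_{i_k}^{\epsilon_n} v_{\tau}^{\epsilon_n})_{\sharp} \nu \, ; \Qpazo \Big) \leq \epsilon_n h_{i_k}^{\epsilon_n}.
\end{equation*}

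To close the argument, I would extract a limit velocity from the family $(v_{\tau}^{\epsilon_n})$. Under Hypotheses \ref{hyp:CI}-$(ii)$ and $(iii)$, the set $V(\tau,\nu) \subset C^0(\R^d,\R^d)$ is compact with respect to $\dsf_{cc}(\cdot,\cdot)$ by the Ascoli-Arzel\`a criterion (Theorem \ref{thm:Ascoli}), and is closed by Hypothesis \ref{hyp:CI}-$(i)$; hence along a subsequence one has $\dsf_{cc}(v_{\tau}^{\epsilon_n}, v_{\tau}) \to 0$ for some $v_{\tau} \in V(\tau,\nu)$. Lemma \ref{lem:ConvdccWass}, invoked with the sublinearity bound of Hypothesis \ref{hyp:CI}-$(ii)$ and the base measure $\nu$, upgrades this convergence to $\NormLp{v_{\tau}^{\epsilon_n} - v_{\tau}}{p}{\R^d,\R^d ; \, \nu} \to 0$.

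A standard diagonal extraction then concludes the proof: choosing $k(n)$ so that $h_n := h_{i_{k(n)}}^{\epsilon_n} \to 0^+$, the elementary inequality \eqref{eq:WassEst} combined with the triangle inequality yields
\begin{equation*}
\tfrac{1}{h_n} \dist_{\Pcal_p(\R^d)} \Big( (\Id + h_n v_{\tau})_{\sharp} \nu \, ; \Qpazo \Big) \leq \epsilon_n + \NormLp{v_{\tau}^{\epsilon_n} - v_{\tau}}{p}{\R^d,\R^d ; \, \nu} ~\underset{n \to +\infty}{\longrightarrow}~ 0,
\end{equation*}
so that $v_{\tau} \in T_{\Qpazo}(\nu)$ by Definition \ref{def:Cone}, and thus $v_{\tau} \in V(\tau,\nu) \cap T_{\Qpazo}(\nu)$. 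I do not anticipate any serious obstacle: once Theorem \ref{thm:Limsup} is available, the only delicate point is the compactness/upgrade step, which is handled cleanly by Ascoli-Arzel\`a together with Lemma \ref{lem:ConvdccWass}; no convexification of the velocity set is needed here, unlike in the sufficient direction.
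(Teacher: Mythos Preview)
Your proposal is correct and follows essentially the same route as the paper, which deduces this result as the stationary special case of Theorem \ref{thm:NecessaryConds}: fix $\tau$ in the full-measure set from Theorem \ref{thm:Limsup}, apply that theorem along $\epsilon_n \to 0^+$ to a viable curve, extract a $\dsf_{cc}$-limit $v_{\tau} \in V(\tau,\nu)$ via Ascoli--Arzel\`a, upgrade to $\Lpazo^p$-convergence through Lemma \ref{lem:ConvdccWass}, and conclude by a diagonal argument combined with \eqref{eq:WassEst}. The only cosmetic difference is that the paper phrases the final estimate with the contingent cone to $\Graph(\Qpazo)$, which collapses to $T_{\Qpazo}(\nu)$ in the stationary case.
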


\begin{proof}
This result is a particular case of Theorem \ref{thm:NecessaryConds} whose proof is detailed in Section \ref{subsection:ViabCauchyAbs} below. 
\end{proof}

Lastly, we end this section by providing necessary and sufficient conditions for the invariance of a stationary constraints set, based on a geometric condition that is stronger than that of Theorem \ref{thm:ViabStationary}.

\begin{thm}[Invariance conditions for stationary constraint sets]
\label{thm:Invariance}
Under the assumptions of Theorem \ref{thm:ViabStationary}, the set $\Qpazo \subset \Pcal_p(\R^d)$ is invariant for \eqref{eq:CauchyViab} \textnormal{if and only if}
\begin{equation*}
V(t,\nu) \subset \co T_{\Qpazo}(\nu) 
\end{equation*}
for $\Lcal^1$-almost every $t \in [0,T]$ and all $\nu \in \Qpazo$. \end{thm}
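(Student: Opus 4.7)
The plan is to prove each implication separately, with the necessary direction being a direct consequence of Theorem \ref{thm:Liminf} and the sufficient direction mimicking the Gr\"onwall template of Theorem \ref{thm:ViabStationary}, now applied to an arbitrary prescribed solution rather than one we construct. For the necessary direction, I fix some $\tau$ in the full $\Lcal^1$-measure set $\Tcal$ of Theorem \ref{thm:Liminf}, a measure $\nu \in \Qpazo$, and an admissible velocity $v_\tau \in V(\tau,\nu)$. Theorem \ref{thm:Liminf} delivers, for every $\epsilon > 0$, some $h_\epsilon > 0$ and a solution $\mu_\epsilon(\cdot) \in \Spazo_{[0,T]}(\tau,\nu)$ satisfying $W_p(\mu_\epsilon(\tau+h),(\Id + h v_\tau)_\sharp \nu) \leq \epsilon h$ on $[0,h_\epsilon]$. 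Since invariance forces $\mu_\epsilon(\tau+h) \in \Qpazo$, one has $\dist_{\Pcal_p(\R^d)}((\Id + h v_\tau)_\sharp \nu \,; \Qpazo) \leq \epsilon h$ on the same interval, and a diagonal choice of the form $h_n := \min\{h_{1/n},1/n\}$ with $\epsilon_n = 1/n$ yields $\liminf_{h \to 0^+} \tfrac{1}{h} \dist_{\Pcal_p(\R^d)}((\Id + h v_\tau)_\sharp \nu \,; \Qpazo) = 0$, so $v_\tau \in T_{\Qpazo}(\nu) \subset \co T_{\Qpazo}(\nu)$.

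For the sufficient direction, I fix any solution $\mu(\cdot) \in \Spazo_{[0,T]}(0,\mu^0)$ with $\mu^0 \in \Qpazo$ (taking the initial time $\tau_0 = 0$ without loss of generality), set $g(t) := \dist_{\Pcal_p(\R^d)}(\mu(t) \,; \Qpazo)$, and argue by contradiction. The moment bound of Proposition \ref{prop:MomentumCI} confines $\mu(\cdot)$ to a compact region of $\Pcal_p(\R^d)$, so after the same localisation $\Qpazo_R := \Qpazo \cap \B_{\Pcal_p(\R^d)}(\mu^0,R)$ as in the proof of Theorem \ref{thm:ViabStationary}, the function $g(\cdot)$ is absolutely continuous. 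Suppose for contradiction that $g(t) = 0$ and $g > 0$ on $(t,t+\delta)$ for some $t \in [0,T)$ and $\delta > 0$. At a differentiability point $\tau$ of $g^p$ lying in the full-measure set of Theorem \ref{thm:Limsup} and verifying Hypotheses \ref{hyp:CI}, pick $\nu_\tau \in \Qpazo_R$ realising $g(\tau) = W_p(\mu(\tau),\nu_\tau)$. Applying Theorem \ref{thm:Limsup} along the sequence $h_i = 1/i$ yields, for each $\epsilon > 0$, a velocity $v_\tau^\epsilon \in V(\tau,\mu(\tau))$ and a subsequence $h_{i_k}^\epsilon \to 0^+$ such that $W_p(\mu(\tau+h_{i_k}^\epsilon),(\Id + h_{i_k}^\epsilon v_\tau^\epsilon)_\sharp \mu(\tau)) \leq \epsilon h_{i_k}^\epsilon$.

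Hypothesis \ref{hyp:CI}-$(iv)$ then provides $w_\tau^\epsilon \in V(\tau,\nu_\tau)$ with $\dsf_{\sup}(v_\tau^\epsilon,w_\tau^\epsilon) \leq L(\tau) g(\tau)$, and the standing invariance assumption forces $w_\tau^\epsilon \in \co T_{\Qpazo}(\nu_\tau)$. Combining the directional superdifferentiability of Proposition \ref{prop:SuperdiffWass} at the pair $(\mu(\tau),\nu_\tau)$ with the error-absorption computation \eqref{eq:ViabCompact4Bis} to reduce $\mu(\tau+h_{i_k}^\epsilon)$ to $(\Id + h_{i_k}^\epsilon v_\tau^\epsilon)_\sharp \mu(\tau)$ modulo an $O(\epsilon)$ term, and inserting crossed terms exactly as in \eqref{eq:ViabCompact61}, one obtains
\begin{equation*}
g^{p-1}(\tau) \dot g(\tau) \leq \bigl(l(\tau) + L(\tau)\bigr) g^p(\tau) + \INTDom{\bigl\langle w_\tau^\epsilon(y), j_p(x-y)\bigr\rangle}{\R^{2d}}{\gamma_\tau(x,y)} + C \epsilon,
\end{equation*}
where the $l(\tau)$-contribution arises from the Lipschitz bound of Hypothesis \ref{hyp:CI}-$(iii)$ on the diagonal term $v_\tau^\epsilon(x) - v_\tau^\epsilon(y)$, and the $L(\tau)$-contribution from H\"older's inequality $\int|j_p(x-y)|\textnormal{d}\gamma_\tau \leq g^{p-1}(\tau)$ combined with the sup-norm mismatch $L(\tau) g(\tau)$. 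The tangent-cone inequality \eqref{eq:ViabCompact3}, extended from $T_{\Qpazo}(\nu_\tau)$ to $\co T_{\Qpazo}(\nu_\tau)$ by linearity and strong continuity of the integral functional in its last argument, then shows that the middle integral is nonpositive; sending $\epsilon \to 0^+$ leaves the standard linear differential inequality $\dot g(\tau) \leq (l(\tau) + L(\tau)) g(\tau)$, which combined with $g(t) = 0$ and Gr\"onwall's lemma contradicts the positivity of $g$ on $(t,t+\delta)$. The main obstacle is that $v_\tau^\epsilon$ depends on $\epsilon$ and only an approximate, subsequential metric derivative is available from Theorem \ref{thm:Limsup}; this forces one to bound every non-tangential crossed term in a manner that is uniform in $\epsilon$ before the limit can be taken, which is precisely what Hypotheses \ref{hyp:CI}-$(iii)$ and $(iv)$ allow.
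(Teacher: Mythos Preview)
Your proof is correct and follows essentially the same approach as the paper. The paper itself defers to the time-dependent results (Theorem~\ref{thm:NecessaryConds} for necessity and Theorem~\ref{thm:InvarianceTube}/\ref{thm:ViabTubeACL} for sufficiency), but the arguments carried out there are exactly the ones you give directly in the stationary setting: necessity via Theorem~\ref{thm:Liminf} and invariance forcing $\mu_\epsilon(\tau+h)\in\Qpazo$, and sufficiency via Theorem~\ref{thm:Limsup} applied to an arbitrary solution together with the crossed-term manipulation and the tangent-cone inequality \eqref{eq:ViabCompact3} extended to $\co T_{\Qpazo}(\nu_\tau)$ by linearity and continuity.
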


\begin{proof}
This result is a particular case of Theorems \ref{thm:NecessaryConds} and Theorem \ref{thm:ViabTubeACL}, whose proofs are discussed in depth in Section \ref{subsection:ViabCauchyAbs} below. 
\end{proof} 


\subsection{The case of time-dependent constraints}
\label{subsection:ViabCauchyAbs}

As mentioned hereinabove, the viability and invariance results exposed in Theorem \ref{thm:ViabStationary}, Theorem \ref{thm:ViabStationaryNesc} and Theorem \ref{thm:Invariance} can be generalised to time-dependent constraint tubes $\Qpazo : [0,T] \tto \Pcal_p(\R^d)$. In this setting, the relevant geometric objects on which these statements are based are the contingent cones to the graph of the constraints, which are defined for each $(\tau,\mu) \in \Graph(\Qpazo)$ as
\begin{equation*}
\begin{aligned}
T_{\Graph(\Qpazo)}(\tau,\mu) := \bigg\{ (\zeta,\xi) \in & \; \R \times \Lpazo^p(\R^d,\R^d;\mu) ~\, \textnormal{s.t.}~ \\
& \liminf_{h \to 0^+} \tfrac{1}{h} \dist_{[0,T] \times \Pcal_p(\R^d)} \Big( \big(\tau + h \zeta , (\Id + h \xi)_{\sharp} \mu \big) \, ; \Graph(\Qpazo) \Big) = 0 \bigg\}.
\end{aligned}
\end{equation*}
By a simple adaptation of \cite[Proposition 5.1.4]{Aubin1990} following \cite[Sections 2.3 and 2.4]{Badreddine2022}, this set can be equivalently characterised as 
\begin{equation}
\label{eq:ContingentGraphCharac}
\begin{aligned}
T_{\Graph(\Qpazo)}(\tau,\mu) = \bigg\{ (\zeta,\xi) & \in \R \times \Lpazo^p(\R^d,\R^d;\mu) ~\, \textnormal{s.t.}~ \text{there exist sequences $h_i  \to 0^+$ and $\zeta_i  \to \zeta$ for} \\
& \hspace{3.6cm} \text{which} ~ \dist_{\Pcal_p(\R^d)} \Big( ( \Id + h_i \xi)_{\sharp} \mu \; ; \Qpazo(\tau + h_i \zeta_i) \Big) = o_{\tau}(h_i) \bigg\}.
\end{aligned}
\end{equation}
In the following theorem, we begin our investigation by a discussion centered around necessary viability and invariance conditions, as these latter do not depend on the regularity of the constraints tubes.

\begin{thm}[Necessary viability, invariance and regularity conditions for constraints tubes]
\label{thm:NecessaryConds}
Suppose that $p \in (1,+\infty)$, let $V : [0,T] \times \Pcal_p(\R^d) \tto C^0(\R^d,\R^d)$ be a set-valued map with convex images satisfying Hypotheses \ref{hyp:CI}, and $\Qpazo : [0,T] \tto \Pcal_p(\R^d)$ be a constraints tube with proper images. 

Then if $\Qpazo : [0,T] \tto \Pcal_p(\R^d)$ is viable for \eqref{eq:CauchyViab}, it must be left absolutely continuous and satisfy
\begin{equation}
\label{eq:ViabCompactNesc}
\big( \{1\} \times V(t,\nu) \big) \cap T_{\Graph(\Qpazo)}(t,\nu) \neq \emptyset
\end{equation}
for $\Lcal^1$-almost every $t \in [0,T]$ and each $\nu \in \Qpazo(t)$. Analogously if $\Qpazo : [0,T] \tto \Pcal_p(\R^d)$ is invariant for \eqref{eq:CauchyViab}, it necessarily holds that 
\begin{equation}
\label{eq:InvCompactNesc}
\big( \{1\} \times V(t,\nu) \big) \subset T_{\Graph(\Qpazo)}(t,\nu)
\end{equation}
for $\Lcal^1$-almost every $t \in [0,T]$ and each $\nu \in \Qpazo(t)$. 
\end{thm}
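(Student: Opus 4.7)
The plan is to treat viability and invariance separately, leveraging Theorem \ref{thm:Limsup} and Theorem \ref{thm:Liminf} respectively to extract admissible velocities, and then to convert the resulting approximate infinitesimal relations into genuine contingent cone memberships via a diagonal extraction. The strategy mirrors the proof of Theorem \ref{thm:ViabStationary} in spirit but tracks pairs $(t,\nu)$ on the graph of $\Qpazo$ rather than working in a fixed stationary constraint.

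I would first handle the invariance direction, which is the easier case. Fixing $t$ in the full-measure set $\Tcal \subset (0,T)$ provided by Theorem \ref{thm:Liminf}, $\nu \in \Qpazo(t)$ and $v \in V(t,\nu)$, that theorem yields for each $n \geq 1$ a threshold $h_{1/n}>0$ and a solution $\mu_n(\cdot) \in \Spazo_{[0,T]}(t,\nu)$ satisfying $W_p(\mu_n(t+h),(\Id + hv)_\sharp \nu) \leq h/n$ for every $h \in [0,h_{1/n}]$. Setting $h_n := \min\{h_{1/n},1/n\}$ and invoking invariance to obtain $\mu_n(t+h_n) \in \Qpazo(t+h_n)$ gives
\begin{equation*}
\dist_{\Pcal_p(\R^d)}\big((\Id + h_n v)_\sharp \nu \, ; \Qpazo(t+h_n)\big) \leq h_n/n = o(h_n),
\end{equation*}
so the characterisation \eqref{eq:ContingentGraphCharac} (applied with $\zeta_n \equiv 1$) yields $(1,v) \in T_{\Graph(\Qpazo)}(t,\nu)$, as required. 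The left absolute continuity under viability I would derive in a separate bookkeeping step by fixing $\mu \in \Pcal_p(\R^d)$, $R>0$ and $\nu \in \Qpazo(\tau) \cap \B_{\Pcal_p(\R^d)}(\mu,R)$, picking by viability $\mu_\nu(\cdot) \in \Spazo_{[\tau,T]}(\tau,\nu)$ with $\mu_\nu(s) \in \Qpazo(s)$, and noting that since $\Mpazo_p(\nu) \leq \Mpazo_p(\mu)+R$, the absolute continuity bound \eqref{eq:ACCI} of Proposition \ref{prop:MomentumCI} furnishes a constant $\Cpazo_{T,\mu,R}$ independent of $\nu$ with $\dist_{\Pcal_p(\R^d)}(\nu \, ; \Qpazo(t)) \leq W_p(\nu,\mu_\nu(t)) \leq (1+\Cpazo_{T,\mu,R}) \int_\tau^t m(s) \dn s$; setting $m_{\mu,R}(\cdot) := (1+\Cpazo_{T,\mu,R}) m(\cdot) \in L^1$ gives the desired modulus.

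The viability geometric condition is the delicate part. I would fix $t$ in the full-measure set of Theorem \ref{thm:Limsup} and $\nu \in \Qpazo(t)$, and select by viability a curve $\mu(\cdot)$ with $\mu(t)=\nu$ and $\mu(s) \in \Qpazo(s)$ on $[t,T]$. For each $n \geq 1$, Theorem \ref{thm:Limsup} applied with $\epsilon = 1/n$ and the test sequence $h_i := 1/i$ produces $v_t^n \in V(t,\nu)$ and a subsequence $h_{i_k}^n \to 0^+$ along which $W_p(\mu(t+h_{i_k}^n),(\Id + h_{i_k}^n v_t^n)_\sharp \nu) \leq h_{i_k}^n/n$. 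Next, I would note that Hypotheses \ref{hyp:CI}-$(ii)$ and $(iii)$ combined with the Ascoli-Arzel\`a criterion (Theorem \ref{thm:Ascoli}) render $V(t,\nu)$ compact in $(C^0(\R^d,\R^d),\dsf_{cc})$, so one can extract $v_t^{n_j} \to v_t$ locally uniformly with $v_t \in V(t,\nu)$ by closedness (Hypothesis \ref{hyp:CI}-$(i)$), while Lemma \ref{lem:ConvdccWass} promotes this to $\NormLp{v_t - v_t^{n_j}}{p}{\R^d,\R^d;\,\nu} \to 0$. Choosing $h_j := h_{i_{k(j)}}^{n_j}$ for an index $k(j)$ large enough that $h_j \NormLp{v_t - v_t^{n_j}}{p}{\R^d,\R^d;\,\nu} \leq h_j/j$ and $h_j \leq 1/j$, the Wasserstein inequality \eqref{eq:WassEst} combined with $\mu(t+h_j) \in \Qpazo(t+h_j)$ yields
\begin{equation*}
\dist_{\Pcal_p(\R^d)}\big((\Id + h_j v_t)_\sharp \nu \, ; \Qpazo(t+h_j)\big) \leq h_j \NormLp{v_t - v_t^{n_j}}{p}{\R^d,\R^d;\,\nu} + h_j/n_j = o(h_j),
\end{equation*}
which via \eqref{eq:ContingentGraphCharac} gives $(1,v_t) \in T_{\Graph(\Qpazo)}(t,\nu) \cap (\{1\} \times V(t,\nu))$, as wanted.

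The hard part will be precisely this diagonal extraction: Theorem \ref{thm:Limsup} only provides a subsequence of times depending on $\epsilon$, so one must combine compactness of the velocity fibres in $(C^0(\R^d,\R^d),\dsf_{cc})$ with the passage from local uniform to $\Lpazo^p$-convergence (Lemma \ref{lem:ConvdccWass}) to produce a single sequence $h_j \to 0^+$ for which the contingent cone error is genuinely $o(h_j)$ and not merely $\mathcal{O}(h_j)$. Once this is in place, the rest is careful bookkeeping of the Wasserstein estimates supplied by Proposition \ref{prop:MomentumCI}.
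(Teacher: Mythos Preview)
Your proposal is correct and follows essentially the same route as the paper: Theorem \ref{thm:Liminf} for the invariance inclusion, Theorem \ref{thm:Limsup} together with Ascoli--Arzel\`a compactness of $V(t,\nu)$ and Lemma \ref{lem:ConvdccWass} for the viability intersection, and the absolute continuity estimate \eqref{eq:ACCI} for the left absolute continuity of $\Qpazo$. One cosmetic point: the condition ``$h_j\,\NormLp{v_t - v_t^{n_j}}{p}{\R^d,\R^d;\,\nu} \leq h_j/j$'' cannot be enforced by choosing $k(j)$ (it depends only on $n_j$), but it is redundant anyway since $\NormLp{v_t - v_t^{n_j}}{p}{\R^d,\R^d;\,\nu} \to 0$ already makes that term $o(h_j)$.
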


\begin{proof}
Let us start by showing the necessity of \eqref{eq:ViabCompactNesc} when $\Qpazo : [0,T] \tto \Pcal_p(\R^d)$ is viable for \eqref{eq:CauchyViab}. Let $\Tcal \subset (0,T)$ be the set of full $\Lcal^1$-measure over which the statements of Theorem \ref{thm:Liminf} and Theorem \ref{thm:Limsup} as well as Hypotheses \ref{hyp:CI}-$(ii)$, $(iii)$ and $(iv)$ hold. Fix some $\tau \in \Tcal$, an element $\mu_{\tau} \in \Qpazo(\tau)$, a sequence $h_i \to 0^+$ and a viable curve $\mu(\cdot) \in \Spazo_{[\tau,T]}(\tau,\mu_{\tau})$. By Theorem \ref{thm:Limsup}, there exists for each $\epsilon > 0$ a velocity $v_{\tau}^{\epsilon} \in V(\tau,\mu_{\tau})$ such that 
\begin{equation*}
W_p \Big( \mu(\tau + h_{i_k}^{\epsilon}) , (\Id + h_{i_k}^{\epsilon} v_{\tau}^{\epsilon})_{\sharp} \mu_{\tau} \Big) \leq \epsilon h_{i_k}^{\epsilon} 
\end{equation*}
along an adequate subsequence $h_{i_k}^{\epsilon} \to 0^+$. Observe that by Theorem \ref{thm:Ascoli} and our choice of $\tau \in \Tcal$, the set $V(\tau,\mu_{\tau}) \subset C^0(\R^d,\R^d)$ is compact for the topology induced by $\dsf_{cc}(\cdot,\cdot)$. In particular for each sequence $\epsilon_n \to 0^+$, there exists a subsequence that we do not relabel and some $v_{\tau} \in V(\tau,\mu_{\tau})$ for which 
\begin{equation*}
\NormLp{v_{\tau} - v_{\tau}^{\epsilon_n}}{p}{\R^d,\R^d ; \, \mu_{\tau}} ~\underset{\epsilon_n \to 0^+}{\longrightarrow}~ 0,
\end{equation*}
where we used Lemma \ref{lem:ConvdccWass}. Note also that for every $\epsilon_n >0$, one can choose $\delta_n := h_{i_n}^{\epsilon_n}$ in such a way that $o_{\tau,\epsilon_n}(h_{i_n}^{\epsilon_n}) \leq \epsilon_n h_{i_n}^{\epsilon_n}$. Thus, recalling that $\mu(t) \in \Qpazo(t)$ for all times $t \in [\tau,T]$, one further has that
\begin{equation*}
\begin{aligned}
& \dist_{\Pcal_p(\R^d)} \Big( (\Id + \delta_n v_{\tau})_{\sharp} \mu_{\tau} \, ; \Qpazo(\tau + \delta_n) \Big) \\
& \hspace{0.6cm} \leq \dist_{\Pcal_p(\R^d)} \Big( (\Id + \delta_n v_{\tau}^{\epsilon_n})_{\sharp} \mu_{\tau} \, ; \Qpazo(\tau+\delta_n) \Big) + W_p \Big( (\Id + \delta_n v_{\tau}^{\epsilon_n})_{\sharp} \mu_{\tau} , (\Id + \delta_n v_{\tau})_{\sharp} \mu_{\tau} \Big) \\
& \hspace{0.6cm} \leq W_p \Big( \mu(\tau + \delta_n) , (\Id + \delta_n v_{\tau}^{\epsilon_n})_{\sharp} \mu_{\tau} \Big) + \delta_n \NormLp{v_{\tau} - v_{\tau}^{\epsilon_n}}{p}{\R^d,\R^d ; \, \mu_{\tau}} \\
& \hspace{0.6cm} \leq \delta_n \Big( \epsilon_n + \NormLp{v_{\tau} - v_{\tau}^{\epsilon_n}}{p}{\R^d,\R^d ; \, \mu_{\tau}} \hspace{-0.05cm} \Big),
\end{aligned}
\end{equation*}
which in turn implies 
\begin{equation*}
\liminf_{\delta_n \to 0^+}\tfrac{1}{\delta_n} \dist_{\Pcal_p(\R^d)} \Big( (\Id + \delta_n v_{\tau})_{\sharp} \mu_{\tau} \, ; \Qpazo(\tau+\delta_n) \Big) = 0.
\end{equation*}
By \eqref{eq:ContingentGraphCharac}, this is tantamount to the fact that $(1,v_{\tau}) \in T_{\Graph(\Qpazo)}(\tau,\mu_{\tau})$, and thus yields \eqref{eq:ViabCompactNesc}.

Suppose now that $\Qpazo : [0,T] \tto \Pcal_p(\R^d)$ is invariant for \eqref{eq:CauchyViab}, fix an arbitrary $v_{\tau} \in V(\tau,\mu_{\tau})$, and observe that by Theorem \ref{thm:Liminf}, there exists for every $\epsilon > 0$ some $h_{\epsilon} > 0$ and a curve $\mu_{\epsilon}(\cdot) \in \Spazo_{[\tau,T]}(\tau,\mu_{\tau})$ such that 
\begin{equation*}
W_p \Big( \mu_{\epsilon}(\tau + h) , (\Id + h v_{\tau})_{\sharp} \mu_{\tau} \Big) \leq \epsilon h 
\end{equation*}
for all $h \in [0,h_{\epsilon}]$. This estimate, combined with the fact that $\mu_{\epsilon}(t) \in \Qpazo(t)$ for all times $t \in [\tau,T]$ owing to the invariance of the tube, implies that
\begin{equation*}
\begin{aligned}
\dist_{\Pcal_p(\R^d)} \Big( (\Id + h v_{\tau})_{\sharp} \mu_{\tau} \, ; \Qpazo(\tau+h) \Big) & \leq W_p \Big( \mu_{\epsilon}(\tau + h) , (\Id + h v_{\tau})_{\sharp} \mu_{\tau} \Big) \\
& \leq \epsilon h.
\end{aligned}
\end{equation*}
Thus, dividing by $h >0$ and letting $h \to 0^+$ while recalling that $\epsilon > 0$ is arbitrary, we finally obtain
\begin{equation*}
\liminf_{h \to 0^+} \tfrac{1}{h} \dist_{\Pcal_p(\R^d)} \Big( (\Id + h v_{\tau})_{\sharp} \mu_{\tau} \, ; \Qpazo(\tau+h) \Big) =0, 
\end{equation*}
which equivalently means that $(1,v_{\tau}) \in T_{\Graph(\Qpazo)}(\tau,\mu_{\tau})$ for all $v_{\tau} \in V(\tau,\mu_{\tau})$, and thus yields \eqref{eq:InvCompactNesc}. 

Let us finally prove that if $\Qpazo : [0,T] \tto \Pcal_p(\R^d)$ is viable for \eqref{eq:CauchyViab}, then it is left absolutely continuous. To do so, fix $\mu \in \Pcal_p(\R^d)$ and $R > 0$ in such a way that
\begin{equation*}
\Qpazo(\tau) \cap \B_{\Pcal_p(\R^d)}(\mu,R) \neq \emptyset.
\end{equation*}
Then, since $\Qpazo : [0,T] \tto \Pcal_p(\R^d)$ is viable, there exists for all $\tau \in [0,T]$ and each $\mu_{\tau} \in \Qpazo(\tau) \cap \B_{\Pcal_p(\R^d)}(\mu,R)$ a curve $\mu(\cdot) \in \Spazo_{[\tau,T]}(\tau,\mu_{\tau})$ such that $\mu(t) \in \Qpazo(t)$ for all times $t \in [\tau,T]$. Besides by Proposition \ref{prop:MomentumCI}, there exists a constant $\Cpazo_{\mu,R} > 0$ depending only on the magnitudes of $p,\Mpazo_p(\mu),R$ and $\Norm{m(\cdot)}_1$ such that
\begin{equation*}
W_p(\mu_{\tau},\mu(t)) \leq (1+\Cpazo_{\mu,R}) \INTSeg{m(s)}{s}{\tau}{t}
\end{equation*}
for all times $t \in [\tau,T]$. Thus, noting by construction that for any $0 \leq \tau \leq t \leq T$, it holds that
\begin{equation*}
\begin{aligned}
\Delta_{\mu,R} (\Qpazo(\tau),\Qpazo(t)) & \leq \sup \bigg\{ \dist_{\Pcal_p(\R^d)} \big( \mu_{\tau} \, ; \Qpazo(t) \big) ~\textnormal{s.t.}~ \mu_{\tau} \in \Qpazo(\tau) \cap \B_{\Pcal_p(\R^d)}(\mu,R) \bigg\} \\
& \leq \sup \bigg\{ W_p(\mu_{\tau},\mu(t)) ~\, \textnormal{s.t.}~ \mu_{\tau} \in \Qpazo(\tau) \cap \B_{\Pcal_p(\R^d)}(\mu,R) ~\text{and}~ \mu(\cdot) \in \Spazo_{[\tau,T]}(\tau,\mu_{\tau}) \\
& \hspace{9.65cm} \text{satisfies}~ \mu(t) \in \Qpazo(t) \bigg\} \\
& \leq (1+\Cpazo_{\mu,R}) \INTSeg{m(s)}{s}{\tau}{t}, 
\end{aligned}
\end{equation*}
we can conclude that $\Qpazo : [0,T] \tto \Pcal_p(\R^d)$ is left absolutely continuous. 
\end{proof}

\begin{rmk}[On the role of left absolute continuity]
It is worth noting that in the previous theorem, we have shown that being viable for \eqref{eq:CauchyViab} under Hypotheses \ref{hyp:CI} entails the left absolute continuity of the constraint tube. This supports the fact that this regularity framework -- for which we provide sufficient viability conditions in Theorem \ref{thm:ViabTubeACL} -- appears quite naturally when studying Cauchy-Lipschitz continuity inclusions with state-constraints. 
\end{rmk}

In the next theorem, we provide sufficient viability conditions for absolutely continuous constraint tubes, which are the natural generalisation of Theorem \ref{thm:ViabStationary} to the time-dependent setting.  

\begin{thm}[Sufficient viability conditions for absolutely continuous constraints tubes]
\label{thm:ViabTubeAC}
Suppose that $p \in (1,+\infty)$, let $V : [0,T] \times \Pcal_p(\R^d) \tto C^0(\R^d,\R^d)$ be a set-valued map with convex images satisfying Hypotheses \ref{hyp:CI} and $\Qpazo : [0,T] \tto \Pcal_p(\R^d)$ be an absolutely continuous tube with proper images such that 
\begin{equation}
\label{eq:ViabCompactHypBis}
( \{1\} \times V(t,\nu)) \cap \co T_{\Graph(\Qpazo)}(t,\nu) \neq \emptyset
\end{equation}
for $\Lcal^1$-almost every $t \in [0,T]$ and all $\nu \in \Qpazo(t)$. Then $\Qpazo : [0,T] \tto \Pcal_p(\R^d)$ is viable for \eqref{eq:CauchyViab}.
\end{thm}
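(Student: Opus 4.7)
The overall plan is to run the same contradiction strategy as in Theorem~\ref{thm:ViabStationary}, but at the level of the graph of the tube. Without loss of generality take $\tau=0$ and fix $\mu^0\in\Qpazo(0)$. Define the distance function
\begin{equation*}
g(t):=\dist_{\Pcal_p(\R^d)}\bigl(\Rpazo_{(0,t)}(\mu^0)\,;\Qpazo(t)\bigr).
\end{equation*}
By Lemma~\ref{lem:ACReach} the reachable map is absolutely continuous in the Hausdorff metric, and by hypothesis $\Qpazo$ is absolutely continuous with proper images. After confining the analysis to a large ball $\B_{\Pcal_p(\R^d)}(\mu^0,R)$ exactly as in the stationary proof (so that all infima involved are attained by compactness), Proposition~\ref{prop:AC} yields that $g(\cdot)\in\AC([0,T],\R_+)$ with $g(0)=0$. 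If $g$ is not identically zero, continuity provides $t^*\in[0,T)$ and $\delta>0$ with $g(t^*)=0$ and $g(\tau)>0$ for $\tau\in(t^*,t^*+\delta)$, and I will derive a Grönwall estimate on $g$ over this interval to reach a contradiction.

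Pick a generic $\tau\in(t^*,t^*+\delta)$ that is simultaneously a differentiability point of $g$, a one-sided Lebesgue point of $m(\cdot),l(\cdot),L(\cdot)$ and of the $L^1$-modulus of continuity of $\Qpazo$, and a time at which Theorem~\ref{thm:Liminf} applies. Select minimisers $\mu_\tau\in\Rpazo_{(0,\tau)}(\mu^0)$, $\nu_\tau\in\Qpazo(\tau)$ realising $g(\tau)=W_p(\mu_\tau,\nu_\tau)$, some $\gamma_\tau\in\Gamma_o(\mu_\tau,\nu_\tau)$, and a velocity $v_\tau\in V(\tau,\nu_\tau)$ with $(1,v_\tau)\in\co T_{\Graph(\Qpazo)}(\tau,\nu_\tau)$ furnished by \eqref{eq:ViabCompactHypBis}. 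Hypothesis \ref{hyp:CI}-$(iv)$ yields a matching $\tilde v_\tau\in V(\tau,\mu_\tau)$ with $\dsf_{\sup}(v_\tau,\tilde v_\tau)\leq L(\tau)g(\tau)$, and Theorem~\ref{thm:Liminf} produces an admissible curve $\mu_\epsilon(\cdot)\in\Spazo_{[\tau,T]}(\tau,\mu_\tau)$ with $W_p\bigl(\mu_\epsilon(\tau+h),(\Id+h\tilde v_\tau)_\sharp\mu_\tau\bigr)\leq\epsilon h$.

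The main obstacle, and the genuinely new step compared with the stationary proof, is converting the graphical convex contingency $(1,v_\tau)\in\co T_{\Graph(\Qpazo)}(\tau,\nu_\tau)$ into an actual element of $\Qpazo(\tau+h)$ lying at distance $\leq\epsilon h+o_\tau(h)$ from $(\Id+hv_\tau)_\sharp\nu_\tau$. The plan is to approximate $(1,v_\tau)$ by finite convex combinations $\sum_{j}\alpha_j^n(\zeta_j^n,\xi_j^n)$ of elements of $T_{\Graph(\Qpazo)}(\tau,\nu_\tau)$. For each atom, the characterisation \eqref{eq:ContingentGraphCharac} furnishes sequences $h_i\to 0^+$ and $\zeta^{n,h_i}_j\to\zeta_j^n$ with $\dist_{\Pcal_p(\R^d)}\bigl((\Id+h_i\xi_j^n)_\sharp\nu_\tau\,;\Qpazo(\tau+h_i\zeta_j^{n,h_i})\bigr)=o(h_i)$. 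Since $\Qpazo$ is absolutely continuous at the Lebesgue point $\tau$, the $L^1$-modulus estimate lets us transport each such approximation into $\Qpazo(\tau+h_i)$ at an additional cost $\int m_{\mu^0,R}(s)\,\dn s$ over an interval of length $O(h_i|\zeta_j^{n,h_i}-1|)$, which combined with $\sum_j\alpha_j^n\zeta_j^n\to 1$ produces an element $\hat\nu_{h_i}\in\Qpazo(\tau+h_i)$ with $W_p\bigl(\hat\nu_{h_i},(\Id+h_iv_\tau)_\sharp\nu_\tau\bigr)\leq\epsilon h_i+o_{\tau,\epsilon}(h_i)$ after a diagonal extraction in $n$. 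This is the place where the convexification forces us to juggle finitely many different time speeds $\zeta_j^n$ simultaneously, and absolute continuity of the tube is what makes the juggling succeed.

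With such $\hat\nu_{h_i}$ in hand, bounding the forward difference quotient of $\tfrac{1}{p}g^p$ along $h_i$ by $\tfrac{1}{p}W_p^p(\mu_\epsilon(\tau+h_i),\hat\nu_{h_i})-\tfrac{1}{p}W_p^p(\mu_\tau,\nu_\tau)$ and invoking the joint superdifferentiability estimate of Proposition~\ref{prop:SuperdiffWass} together with the auxiliary bound of Lemma~\ref{lem:PnormEst}, then inserting the cross-terms $v_\tau(y)-\tilde v_\tau(y)$ and $\tilde v_\tau(x)-\tilde v_\tau(y)$ and using Hypotheses \ref{hyp:CI}-$(iii)$-$(iv)$ exactly as in the estimates \eqref{eq:ViabCompact61}--\eqref{eq:ViabCompact62}, produces
\begin{equation*}
\dot g(\tau)\leq\bigl(l(\tau)+L(\tau)\bigr)g(\tau)+C\epsilon
\end{equation*}
at $\Lcal^1$-almost every $\tau\in(t^*,t^*+\delta)$. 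Letting $\epsilon\to 0^+$ and applying Grönwall's lemma on $[t^*,t^*+\delta)$ starting from $g(t^*)=0$ forces $g\equiv 0$ there, contradicting the choice of $\delta$. Once $g\equiv 0$ on $[0,T]$, the existence of an actual viable trajectory is obtained verbatim as in the second step of the proof of Theorem~\ref{thm:ViabStationary}: a dyadic subdivision combined with the semigroup property \eqref{eq:SemigroupReach} yields curves $\mu_n(\cdot)\in\Spazo_{[0,T]}(0,\mu^0)$ satisfying $\mu_n(t_k)\in\Qpazo(t_k)$ at every dyadic node, and the compactness of the solution set granted by Proposition~\ref{prop:Topological} together with a density argument produces the desired viable limit $\mu(\cdot)$.
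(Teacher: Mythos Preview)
Your overall architecture is correct and matches the paper: reduce to $\tau=0$, define $g(t)=\dist_{\Pcal_p(\R^d)}(\Rpazo_{(0,t)}(\mu^0);\Qpazo(t))$, localise to a large ball so that minimisers exist, argue by contradiction, derive a Gr\"onwall estimate on $g$, and finish with the dyadic/compactness argument of Theorem~\ref{thm:ViabStationary}. The gap is in your treatment of the convexification.

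You propose to approximate $(1,v_\tau)$ by a finite convex combination $\sum_j\alpha_j^n(\zeta_j^n,\xi_j^n)$ of genuine contingent directions, produce for each atom an element of $\Qpazo(\tau+h_i\zeta_j^{n,h_i})$ close to $(\Id+h_i\xi_j^n)_\sharp\nu_\tau$, transport each into $\Qpazo(\tau+h_i)$ via absolute continuity, and then ``combine'' these into a single $\hat\nu_{h_i}\in\Qpazo(\tau+h_i)$ close to $(\Id+h_iv_\tau)_\sharp\nu_\tau$. This last step does not work: the set $\Qpazo(\tau+h_i)$ is not convex, so having finitely many points of $\Qpazo(\tau+h_i)$ near the pushforwards $(\Id+h_i\xi_j^n)_\sharp\nu_\tau$ gives no mechanism to manufacture a single point near their ``average'' $(\Id+h_iv_\tau)_\sharp\nu_\tau$. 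Moreover, the transport cost from $\Qpazo(\tau+h_i\zeta_j^{n,h_i})$ to $\Qpazo(\tau+h_i)$ is of order $h_i\,m_{\mu^0,R}(\tau)\,|\zeta_j^{n,h_i}-1|$, and the individual $\zeta_j^n$ need not be close to $1$; only their weighted average is. So even the ``additional cost'' you quote is not $o(h_i)$ atom by atom, and there is no averaging available to rescue it.

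The paper avoids this entirely by convexifying \emph{after} passing to a scalar inequality. For each single $(\zeta_\tau,\xi_\tau)\in T_{\Graph(\Qpazo)}(\tau,\nu_\tau)$ one estimates $\tfrac{1}{p}g^p(\tau+h_{i_k}\zeta_\tau^{i_k})-\tfrac{1}{p}g^p(\tau)$ using one point in $\Qpazo_R(\tau+h_{i_k}\zeta_\tau^{i_k})$ and one point in $\Rpazo_{(0,\tau+h_{i_k}\zeta_\tau^{i_k})}(\mu^0)$, applies Proposition~\ref{prop:SuperdiffWass}, and lets $h_{i_k}\to 0^+$ to obtain a differential inequality whose right-hand side is \emph{linear} in $(\zeta_\tau,\xi_\tau)$. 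Convex combinations are then taken at the level of this linear inequality, not inside $\Qpazo$. A second subtlety you miss is that producing the reachable-side approximation requires looking at $\Rpazo_{(0,\tau+h_{i_k}\zeta_\tau^{i_k})}(\mu^0)$, and if $\zeta_\tau^{i_k}<0$ this is \emph{backward} in time; Theorem~\ref{thm:Liminf} is forward-only, so the paper invokes Theorem~\ref{thm:Limsup} in that case, yielding only \emph{some} $v_\tau^\epsilon\in V(\tau,\mu_\tau)$ rather than an arbitrary one. Reconciling the ``for every $v_\tau$'' (positive $\zeta$) and ``for some $v_\tau^\epsilon$'' (negative $\zeta$) clauses when summing the atoms is done by the $\beta_j$-reweighting trick of \eqref{eq:ViabTubeSum1}--\eqref{eq:ViabTubeSum2}, which exploits convexity of $V(\tau,\mu_\tau)$ rather than of $\Qpazo$.
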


\begin{proof}
As in the proof of Theorem \ref{thm:ViabStationary}, we assume without loss of generality that $(\tau,\mu_{\tau}) = (0,\mu^0)$ for some $\mu^0 \in \Qpazo(0)$. From there on, the arguments will essentially follow along the same line as those of Theorem \ref{thm:ViabStationary}, in which one aims at showing that the distance function, defined by  
\begin{equation*}
g(t) := \dist_{\Pcal_p(\R^d)} \Big( \Rpazo_{(0,t)}(\mu^0) \, ; \Qpazo(t) \Big)
\end{equation*}
for all times $t \in [0,T]$, is identically equal to $0$.  

In what follows, we let $\Tcal \subset (0,T)$ be the set of full $\Lcal^1$-measure on which the statements of Theorem \ref{thm:Liminf} and Theorem \ref{thm:Limsup} as well as  Hypotheses \ref{hyp:CI}-$(ii)$, $(iii)$ and $(iv)$ hold. Observing that $g(\cdot) \in \AC([0,T],\R_+)$ by Proposition \ref{prop:AC} and Lemma \ref{lem:ACReach} , we also denote by $\Dcal \subset (0,T)$ the subset of full $\Lcal^1$-measure where it is differentiable. Moreover, owing to the absolute continuity of $\Qpazo : [0,T] \tto \Pcal_p(\R^d)$ and to the fact that $\cup_{t \in [0,T]} \Rpazo_{(0,t)}(\mu^0)$ is compact by Proposition \ref{prop:Topological} and Lemma \ref{lem:ACReach}, one can find some radius $R>0$ satisfying
\begin{equation}
\label{eq:RDefTube}
\dist_{\Pcal_p(\R^d)} \Big( \Rpazo_{(0,t)}(\mu^0) \, ; \partial \B_{\Pcal_p(\R^d)}(\mu^0,R) \Big) \geq \dist_{\Pcal_p(\R^d)} \Big( \Rpazo_{(0,t)}(\mu^0) \, ; \Qpazo(t) \Big) + 1
\end{equation}
for all times $t \in [0,T]$, by following e.g. the arguments detailed in Appendix \ref{section:AppendixAC} below. Note that by construction, the sets $\Qpazo_R(t) := \Qpazo(t) \cap \B_{\Pcal_p(\R^d)}(\mu^0,R)$ are nonempty and such that
\begin{equation*}
g(t) = \dist_{\Pcal_p(\R^d)} \Big( \Rpazo_{(0,t)}(\mu^0) \, ; \Qpazo_R(t) \Big)
\end{equation*}
for all times $t \in [0,T]$.


\paragraph*{Step 1 -- Local variations of the distance along contingent directions.}

Suppose by contradiction that there exist some $t \in [0,T)$ along with a small $\delta > 0$ such that $g(t) = 0$ and $g(\tau) > 0$ for all times $\tau \in (t,t+\delta)$, and fix an arbitrary element $\tau \in (t,t+\delta) \cap \Tcal \cap \Dcal$. Since $\Rpazo_{(0,\tau)}(\mu^0)$ and $\Qpazo_R(\tau)$ are both compact because $\Qpazo(\tau)$ is proper, one has that
\begin{equation*}
g(\tau) = W_p(\mu_{\tau},\nu_{\tau})
\end{equation*}
for some $\mu_{\tau} \in \Rpazo_{(0,\tau)}(\mu^0)$ and $\nu_{\tau} \in \Qpazo_R(\tau)$. By \eqref{eq:ContingentGraphCharac} along with the estimate \eqref{eq:RDefTube} imposed on $R>0$, there exists for every $(\zeta_{\tau},\xi_{\tau}) \in T_{\Graph(\Qpazo)}(\tau,\nu_{\tau})$ two sequences $h_i \to 0^+$ and $\zeta_{\tau}^{\,i} \to \zeta_{\tau}$ for which
\begin{equation*}
\dist_{\Pcal_p(\R^d)} \Big( (\Id + h_i \xi_{\tau})_{\sharp} \nu_{\tau} \, ; \Qpazo_R(\tau + h_i \zeta_{\tau}^{\,i}) \Big) = o_{\tau}(h_i).
\end{equation*}
This allows us to estimate from above the variation of $g^p(\cdot)$ around $\tau$ as 
\begin{equation}
\label{eq:ViabTube1}
\begin{aligned}
\tfrac{1}{p} g^p(\tau + h_i \zeta_{\tau}^{\,i}) - \tfrac{1}{p} g^p(\tau) & = \tfrac{1}{p} \dist_{\Pcal_p(\R^d)}^p \Big( \Rpazo_{(0,\tau+h_i \zeta_{\tau}^{\,i})}(\mu^0) \, ; \Qpazo_R(\tau + h_i \zeta_{\tau}^{\,i}) \Big) - \tfrac{1}{p} W_p^p(\mu_{\tau},\nu_{\tau}) \\
& \leq \tfrac{1}{p} \dist_{\Pcal_p(\R^d)}^p \Big( \Rpazo_{(0,\tau+h_i \zeta_{\tau}^{\,i})}(\mu^0) \, ; (\Id + h_i \xi_{\tau})_{\sharp} \nu_{\tau} \Big) - \tfrac{1}{p} W_p^p(\mu_{\tau},\nu_{\tau}) + o_{\tau}(h_i)
\end{aligned}
\end{equation}
for any $h_i > 0$ that is sufficiently small. 

In order to extract further information from \eqref{eq:ViabTube1}, we need to discriminate between two possible scenarios depending on the asymptotic behaviour of the sequence $(\zeta_{\tau}^i) \subset \R$. If there exists a subsequence $i_k \to +\infty$ for which $\zeta_{\tau}^{\, i_k} \geq 0$, it follows from Theorem \ref{thm:Liminf} that for every $\epsilon > 0$ and any $v_{\tau} \in V(\tau,\mu_{\tau})$, there exists a curve $\mu_{\epsilon}(\cdot) \in \Spazo_{[\tau,T]}(\tau,\mu_{\tau})$ such that
\begin{equation}
\label{eq:ViabTube21}
W_p \Big( \mu_{\epsilon}(\tau + h_{i_k} \zeta_{\tau}^{\, i_k}) , (\Id + h_{i_k} \zeta_{\tau}^{\, i_k} v_{\tau})_{\sharp} \mu_{\tau} \Big) \leq \epsilon h_{i_k} \zeta_{\tau}^{\, i_k}.
\end{equation}
On the other hand, if $\zeta_{\tau}^i < 0$ for all large $i \geq 1$, we can apply Theorem \ref{thm:Limsup} to obtain for each curve $\mu(\cdot) \in \Spazo_{[0,T]}(\tau,\mu_{\tau})$ the existence of an element $v_{\tau}^{\epsilon} \in V(\tau,\mu_{\tau})$ and of a subsequence $i_k \to +\infty$, both depending on $\epsilon > 0$, for which
\begin{equation}
\label{eq:ViabTube22}
W_p \Big( \mu(\tau+h_{i_k} \zeta_{\tau}^{\, i_k}) , \big( \Id + h_{i_k} \zeta_{\tau}^{\, i_k} v_{\tau}^{\epsilon} \big)_{\sharp} \mu_{\tau} \Big) \leq \epsilon h_{i_k} |\zeta_{\tau}^{\, i_k}|.
\end{equation}
Thus by combining \eqref{eq:ViabTube21} and \eqref{eq:ViabTube22}, one may assert that there exist curves $\mu_{\epsilon}(\cdot) \in \Spazo_{[0,T]}(\tau,\mu_{\tau})$ along with two subsequences $h_{i_k} \to 0^+$ and $\zeta_{\tau}^{\, i_k} \to \zeta_{\tau}$, all possibly depending on $\epsilon > 0$, such that 
\begin{equation}
\label{eq:ViabTubeVtau}
W_p \Big( \mu_{\epsilon}(\tau+h_{i_k} \zeta_{\tau}^{\, i_k}) , (\Id + h_{i_k} \zeta_{\tau}^{\, i_k} v_{\tau}^{\epsilon})_{\sharp} \mu_{\tau} \Big) \leq \epsilon h_{i_k} |\zeta_{\tau}^{\, i_k}| \quad \text{for} \quad
\left\{
\begin{aligned}
& \text{every $v_{\tau}^{\epsilon} \in V(\tau,\mu_{\tau})$} && \text{if $\zeta_{\tau}^{i_k} \geq 0$,} \\
& \text{some $v_{\tau}^{\epsilon} \in V(\tau,\mu_{\tau})$} && \text{if $\zeta_{\tau}^{i_k} < 0$,}
\end{aligned}
\right.
\end{equation}
when $i_k \geq 1$ is large enough. Furthermore, observing that
\begin{equation*}
\mu_{\epsilon}(\tau+h_{i_k} \zeta_{\tau}^{\, i_k}) \in \Rpazo_{(0,\tau + h_{i_k} \zeta_{\tau}^{\, i_k})}(\mu^0), 
\end{equation*}
one may refine the estimate of \eqref{eq:ViabTube1} using the curves $\mu_{\epsilon}(\cdot)$ as follows
\begin{equation}
\label{eq:ViabTubeVtau1}
\begin{aligned}
\tfrac{1}{p} g^p(\tau + h_{i_k} \zeta_{\tau}^{\, i_k}) - \tfrac{1}{p} g^p(\tau) & \leq \tfrac{1}{p} W_p^p \Big( \mu_{\epsilon}(\tau+h_{i_k} \zeta_{\tau}^{\, i_k}) , (\Id + h_{i_k} \xi_{\tau})_{\sharp} \nu_{\tau} \Big)  - \tfrac{1}{p} W_p^p(\mu_{\tau},\nu_{\tau}) + o_{\tau}(h_{i_k}).
\end{aligned}
\end{equation}
At this stage, upon noting that $v_{\tau} \in \Lpazo^p(\R^d,\R^d;\mu_{\tau})$ by Hypothesis \ref{hyp:CI}-$(ii)$ while $\xi_{\tau} \in \Lpazo^p(\R^d,\R^d;\nu_{\tau})$ by definition, and assuming without loss of generality that $\epsilon \in (0,1]$ and $h_{i_k} \in (0,1]$, one may reproduce the computations of \eqref{eq:ViabCompact4Bis} in the proof of Theorem \ref{thm:ViabStationary} and combine them with \eqref{eq:ViabTubeVtau} to obtain that
\begin{equation}
\label{eq:ViabTubeVtau2}
\begin{aligned}
& \tfrac{1}{p} W_p^p \Big( \mu_{\epsilon}(\tau+h_{i_k} \zeta_{\tau}^{\, i_k}) , (\Id + h_{i_k} \xi_{\tau})_{\sharp} \nu_{\tau} \Big) \\
& \hspace{2.05cm} - \tfrac{1}{p} W_p^p \Big( (\Id + h_{i_k} \zeta_{\tau}^{\, i_k} v_{\tau}^{\epsilon})_{\sharp} \mu_{\tau} , (\Id + h_{i_k} \xi_{\tau})_{\sharp} \nu_{\tau} \Big) \leq \epsilon h_{i_k} + o_{\tau,\epsilon}(h_{i_k})
\end{aligned}
\end{equation}
for $h_{i_k} > 0$ small enough, and up to rescaling $\epsilon > 0$ by a constant since the sequence $(\zeta_{\tau}^{\, i_k}) \subset \R$ is bounded. Whence, by merging \eqref{eq:ViabTubeVtau1} and \eqref{eq:ViabTubeVtau2}, it then holds that
\begin{equation}
\label{eq:ViabTube3}
\begin{aligned}
\tfrac{1}{p} g^p(\tau + h_{i_k} \zeta_{\tau}^{\, i_k}) - \tfrac{1}{p} g^p(\tau) & \leq \tfrac{1}{p} W_p^p \Big( (\Id + h_{i_k} \zeta_{\tau}^{\, i_k} v_{\tau}^{\epsilon} )_{\sharp} \mu_{\tau}, (\Id + h_{i_k} \xi_{\tau})_{\sharp} \nu_{\tau} \Big) \\
& \hspace{2.075cm} - \tfrac{1}{p} W_p^p(\mu_{\tau},\nu_{\tau}) + \epsilon h_{i_k} + o_{\tau,\epsilon}(h_{i_k}), 
\end{aligned}
\end{equation}
for $i_k \geq 1$ sufficiently large, with $v_{\tau}^{\epsilon} \in V(\tau,\mu_{\tau})$ being either fixed or arbitrary depending on the asymptotic behaviour of $(\zeta_{\tau}^{\, i_k}) \subset \R$. One may then apply the joint superdifferentiability inequality of Proposition \ref{prop:SuperdiffWass} to obtain
\begin{equation}
\label{eq:ViabTube41}
\begin{aligned}
\tfrac{1}{p} W_p^p \Big( (\Id + h_{i_k} \zeta_{\tau}^{\, i_k} v_{\tau}^{\epsilon})_{\sharp} \mu_{\tau}, (\Id + & h_{i_k} \xi_{\tau})_{\sharp} \nu_{\tau} \Big) - \tfrac{1}{p} W_p^p(\mu_{\tau}, \nu_{\tau}) \\
& \leq h_{i_k} \INTDom{\hspace{-0.05cm} \big\langle \zeta_{\tau}^{\, i_k} v_{\tau}^{\epsilon}(x) - \xi_{\tau}(y), j_p(x-y) \big\rangle}{\R^{2d}}{\gamma_{\tau}(x,y)} + o_{\tau,\epsilon}(h_{i_k}),
\end{aligned}
\end{equation}
for each $\gamma_{\tau} \in \Gamma_o(\mu_{\tau},\nu_{\tau})$, where we used the analytical expressions \eqref{eq:Remainder1}-\eqref{eq:Remainder2} of the remainder term, along with the fact that $(\zeta_{\tau}^{\, i_k}) \subset \R$ is bounded. In turn, by combining \eqref{eq:ViabTube3} and \eqref{eq:ViabTube41}, letting $i_k \to +\infty$ and recalling that $\tfrac{1}{p} g^p(\cdot)$ is differentiable at $\tau \in \Dcal$, one finally gets
\begin{equation}
\label{eq:ViabTubeDiffEst}
\zeta_{\tau} \hspace{0.05cm} g^{p-1}(\tau) \dot g(\tau) \leq \INTDom{\big\langle \zeta_{\tau} v_{\tau}^{\epsilon}(x) - \xi_{\tau}(y) , j_p(x-y) \big\rangle}{\R^{2d}}{\gamma_{\tau}(x,y)} \, + \, \epsilon \quad \text{for} \quad 
\left\{
\begin{aligned}
& \text{every $v_{\tau}^{\epsilon} \in V(\tau,\mu_{\tau})$} ~~ & \text{if $\zeta_{\tau} \geq 0$}, \\
& \text{some $v_{\tau}^{\epsilon} \in V(\tau,\mu_{\tau})$} ~~ & \text{if $\zeta_{\tau} < 0$,}
\end{aligned}
\right.
\end{equation}
where $(\zeta_{\tau},\xi_{\tau}) \in T_{\Graph(\Qpazo)}(\tau,\nu_{\tau})$ and $\gamma_{\tau} \in \Gamma_o(\mu_{\tau},\nu_{\tau})$ are arbitrary while $v_{\tau}^{\epsilon} \in V(\tau,\mu_{\tau})$ may possibly depend on $\zeta_{\tau} \in \R$ as well as on the free parameter $\epsilon >0$. 


\paragraph*{Step 2 -- Convexification of the contingent directions and viability.}

In this second step, we show how one can convexify the contingent directions in \eqref{eq:ViabTubeDiffEst} and then prove the existence of viable curves. With this goal in mind, we draw inspiration from \cite[Lemma 4.9]{Frankowska1995} and consider arbitrary collections of $N \geq 1$ elements 
\begin{equation*}
(\zeta_{\tau}^{\,j},\xi_{\tau}^{\,j}) \in T_{\Graph(\Qpazo)}(\tau,\nu_{\tau}) \qquad \text{and} \qquad \alpha_j \in [0,1]
\end{equation*}
indexed by $j \in \{1,\dots,N\}$, which are chosen in such a way that 
\begin{equation}
\label{eq:ViabTubeZetaDef}
\sum_{j=1}^N \alpha_j = 1 \qquad \text{and} \qquad \zeta_{\tau} := \sum_{j=1}^N \alpha_j \zeta_{\tau}^{\,j} > 0.
\end{equation}
Up to reordering the labels, we may posit that there exists an $m \in \{1,\dots,N\}$ such that $\zeta_{\tau}^{\,j} \geq 0$ if $j \geq m$ and $\zeta_{\tau}^{\,j} < 0$ otherwise. By applying \eqref{eq:ViabTubeDiffEst} to each $(\zeta_{\tau}^{\,j},\xi_{\tau}^{\,j}) \in T_{\Graph(\Qpazo)}(\tau,\nu_{\tau})$ with $j \in \{1,\dots,N\}$, and then summing the resulting expressions depending on whether $j < m$ or $j \geq m$, one has that 
\begin{equation}
\label{eq:ViabTubeSum1}
\sum_{j < m} \alpha_j \zeta_{\tau}^{\,j} \hspace{0.05cm} g^{p-1}(\tau) \dot g(\tau) \leq \sum_{j < m} \alpha_j \bigg( \INTDom{\big\langle \zeta_{\tau}^{\,j} v_{\tau}^{\epsilon,j}(x) - \xi_{\tau}^{\,j}(y) , j_p(x-y) \big\rangle}{\R^{2d}}{\gamma_{\tau}(x,y)} + \epsilon \bigg),
\end{equation}
for some fixed (and potentially empty) tuple $(v_{\tau}^{\epsilon,j})_{j < m} \in V(\tau,\mu_{\tau})^{m-1}$, as well as 
\begin{equation}
\label{eq:ViabTubeSum2}
\sum_{j \geq m} \alpha_j \zeta_{\tau}^{\,j} \hspace{0.05cm} g^{p-1}(\tau) \dot g(\tau) \leq \sum_{j \geq m} \alpha_j \bigg( \INTDom{\big\langle \zeta_{\tau}^{\,j} v_{\tau}(x) - \xi_{\tau}^{\,j}(y) , j_p(x-y) \big\rangle}{\R^{2d}}{\gamma_{\tau}(x,y)} + \epsilon \bigg)
\end{equation}
for any element $v_{\tau} \in V(\tau,\mu_{\tau})$. Introducing in turn the coefficients 
\begin{equation*}
\beta_j := \frac{\alpha_j |\zeta_{\tau}^{\,j}|}{\sum_{j \geq m} \alpha_j \zeta_{\tau}^{\,j}} \in (0,1) ~~ \text{for each $j < m$} \qquad \text{and} \qquad \beta := 1 - \sum_{j < m} \beta_j, 
\end{equation*}
which are well-defined as a consequence of \eqref{eq:ViabTubeZetaDef}, while recalling that set $V(\tau,\mu_{\tau})$ is convex by assumption, it holds for each $v_{\tau} \in V(\tau,\nu_{\tau})$ that
\begin{equation*}
v_{\tau}' := \beta v_{\tau} + \sum_{j < m} \beta_j v_{\tau}^{\epsilon,j} \in V(\tau,\nu_{\tau}). 
\end{equation*}
Whence, by merging the estimate of \eqref{eq:ViabTubeSum1} and that of \eqref{eq:ViabTubeSum2} evaluated at $v_{\tau}' \in V(\tau,\mu_{\tau})$ defined via the previous expression, one eventually obtains that
\begin{equation}
\label{eq:ViabTubeFinalEst}
\zeta_{\tau} \, g^{p-1}(\tau) \dot g(\tau) \leq \INTDom{\big\langle \zeta_{\tau} v_{\tau}(x) - \xi_{\tau}(y) , j_p(x-y) \big\rangle}{\R^{2d}}{\gamma_{\tau}(x,y)} + \epsilon
\end{equation}
for any given $(\zeta_{\tau},\xi_{\tau}) \in \textnormal{co} T_{\Graph(\Qpazo)}(\tau,\nu_{\tau})$ satisfying $\zeta_{\tau} >0$ and every $v_{\tau} \in V(\tau,\mu_{\tau})$. Remarking that the right-hand side in  \eqref{eq:ViabTubeFinalEst} is linear and continuous with respect $(\zeta_{\tau},\xi_{\tau}) \in \R \times \Lpazo^p(\R^d,\R^d;\nu_{\tau})$, the latter expression remains valid for every $(\zeta_{\tau},\xi_{\tau}) \in \co T_{\Graph(\Qpazo)}(\tau,\nu_{\tau})$ such that $\zeta_{\tau} > 0$.

At this stage, starting from \eqref{eq:ViabTubeFinalEst}, one may repeat the argument discussed at the end of Step 1 in the proof of Theorem \ref{thm:ViabStationary} while using the facts that $\epsilon > 0$ is arbitrary and $\gamma_{\tau} \in \Gamma_o(\mu_{\tau},\nu_{\tau})$ to show that the latter estimate further yields 
\begin{equation*}
\zeta_{\tau} \, g^{p-1}(\tau) \dot g(\tau) \leq \zeta_{\tau} \Big( l(\tau) + L(\tau) \Big) g^p(\tau) + \INTDom{\big\langle \zeta_{\tau} w_{\tau}(y) - \xi_{\tau}(y) , j_p(x-y) \big\rangle}{\R^{2d}}{\gamma_{\tau}(x,y)},
\end{equation*}
for every $(\zeta_{\tau},\xi_{\tau}) \in \co T_{\Graph(\Qpazo)}(\tau,\nu_{\tau})$ such that $\zeta_{\tau} > 0$ and each $w_{\tau} \in V(\tau,\nu_{\tau})$. Choosing in particular 
\begin{equation*}
(\zeta_{\tau},\xi_{\tau}) = (1,w_{\tau}) \in ( \{1\} \times V(\tau,\nu_{\tau})) \cap \co T_{\Graph(\Qpazo)}(\tau,\nu_{\tau}),
\end{equation*}
which is licit under our standing assumption \eqref{eq:ViabCompactHypBis}, one finally gets that
\begin{equation*}
\dot g(\tau) \leq \Big( l(\tau) + L(\tau) \Big) g(\tau)
\end{equation*}
for all times $\tau \in (t,t+\delta) \cap \Tcal \cap \Dcal$. Noting that $g(t) = 0$ and $\Tcal,\Dcal \subset (0,T)$ both have full $\Lcal^1$-measure, it follows from Gr\"onwall's lemma that $g(\tau) = 0$ for all $\tau \in [t,t+\delta)$, which implies that $g : [0,T] \to \R_+$ is identically equal to zero and thus leads to a contradiction. One can then deduce the existence of a viable curve by repeating the argument detailed above in Step 2 of the proof of Theorem \ref{thm:ViabStationary}.
\end{proof}

In the following theorem, we state the natural counterpart of the sufficient implication of the invariance result of Theorem \ref{thm:Invariance} for absolutely continuous time-dependent constraint sets.

\begin{thm}[Sufficient invariance conditions for absolutely continuous constraints tubes]
\label{thm:InvarianceTube}
Suppose that the assumptions of Theorem \ref{thm:ViabTubeAC} hold and that the tube $\Qpazo : [0,T] \tto \Pcal_p(\R^d)$ is such that
\begin{equation}
\label{eq:InvarianceCond}
\big( \{1\} \times V(t,\nu) \big) \subset \co T_{\Graph(\Qpazo)}(t,\nu)
\end{equation}
for $\Lcal^1$-almost every $t \in [0,T]$ and all $\nu \in \Qpazo(t)$. Then $\Qpazo : [0,T] \tto \Pcal_p(\R^d)$ is invariant for \eqref{eq:CauchyViab}.
\end{thm}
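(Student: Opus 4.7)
The plan is to adapt the proof of Theorem \ref{thm:ViabTubeAC} by tracking the distance of the fixed solution $\mu(\cdot) \in \Spazo_{[\tau_0,T]}(\tau_0,\mu_{\tau_0})$ to the tube rather than that of the entire reachable set. Without loss of generality, take $\tau_0 = 0$ and $\mu_{\tau_0} = \mu^0 \in \Qpazo(0)$, and define
$$g(t) := \dist_{\Pcal_p(\R^d)} \big( \mu(t) \, ; \Qpazo(t) \big).$$
First, the uniform absolute continuity estimate \eqref{eq:ACCI} makes the singleton-valued map $t \mapsto \{\mu(t)\}$ absolutely continuous in the Hausdorff metric, so Proposition \ref{prop:AC} combined with the absolute continuity of $\Qpazo : [0,T] \tto \Pcal_p(\R^d)$ yields $g \in \AC([0,T],\R_+)$. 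A localization step analogous to the one at the start of Theorem \ref{thm:ViabTubeAC} produces a radius $R>0$ such that $\Qpazo_R(t) := \Qpazo(t) \cap \B_{\Pcal_p(\R^d)}(\mu^0,R)$ is nonempty and compact, with $g(t) = \dist(\mu(t), \Qpazo_R(t))$. I aim to show $g \equiv 0$ by contradiction: otherwise, there exist $t_1 \in [0,T)$ and $\delta > 0$ with $g(t_1) = 0$ and $g(\tau) > 0$ on $(t_1, t_1 + \delta)$.

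Fix $\tau$ in this interval intersected with the differentiability set of $g$, the Lebesgue points of $m(\cdot), l(\cdot), L(\cdot)$, and the full-measure set on which Theorem \ref{thm:Limsup} applies, and pick a closest point $\nu_\tau \in \Qpazo_R(\tau)$ realizing $g(\tau) = W_p(\mu(\tau),\nu_\tau)$. Following the derivation of \eqref{eq:ViabTube1}--\eqref{eq:ViabTubeDiffEst} in the proof of Theorem \ref{thm:ViabTubeAC}, with the crucial modification that Theorem \ref{thm:Limsup} must replace Theorem \ref{thm:Liminf} throughout (since the curve $\mu(\cdot)$ is fixed rather than constructed around a preselected velocity, regardless of the sign of $\zeta_\tau^{\,i}$), I would obtain that for every $(\zeta_\tau,\xi_\tau) \in T_{\Graph(\Qpazo)}(\tau,\nu_\tau)$, every $\epsilon > 0$, and some $v_\tau^\epsilon \in V(\tau,\mu(\tau))$ depending on those data,
$$\zeta_\tau g^{p-1}(\tau) \dot g(\tau) \leq \INTDom{\langle \zeta_\tau v_\tau^\epsilon(x) - \xi_\tau(y), j_p(x-y) \rangle}{\R^{2d}}{\gamma_\tau(x,y)} + \epsilon$$
for every $\gamma_\tau \in \Gamma_o(\mu(\tau),\nu_\tau)$. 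The convex-combination argument of Step 2 of Theorem \ref{thm:ViabTubeAC}, together with the compactness of $V(\tau,\mu(\tau)) \subset C^0(\R^d,\R^d)$ furnished by Theorem \ref{thm:Ascoli} and Lemma \ref{lem:ConvdccWass} to extract a convergent subsequence of $v_\tau^\epsilon$ as $\epsilon \to 0^+$, then extends this inequality to every $(\zeta_\tau,\xi_\tau) \in \co T_{\Graph(\Qpazo)}(\tau,\nu_\tau)$ with $\zeta_\tau > 0$ and some $v_\tau \in V(\tau,\mu(\tau))$ tied to that direction.

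The main obstacle is that, unlike in the viability proof where the $v_\tau$ on the right-hand side may be freely chosen, the $v_\tau$ produced here is determined by the contingent direction. Since the invariance condition \eqref{eq:InvarianceCond} makes $(1,w) \in \co T_{\Graph(\Qpazo)}(\tau,\nu_\tau)$ available for \emph{every} $w \in V(\tau,\nu_\tau)$, I would resolve this circularity through a Kakutani--Fan fixed-point argument applied to the set-valued map $w \in V(\tau,\nu_\tau) \tto \{ w' \in V(\tau,\nu_\tau) : \dsf_{\sup}(v(w),w') \leq L(\tau) g(\tau) \}$, where $v(w) \in V(\tau,\mu(\tau))$ denotes an element realizing the inequality at $(\zeta_\tau,\xi_\tau) = (1,w)$; its nonempty convex compact values and upper semicontinuity stem from Hypothesis \ref{hyp:CI}-$(iv)$, Theorem \ref{thm:Ascoli}, and continuity of the integrand in $v$ and $w$. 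A fixed point $w^*$, paired with $v^* = v(w^*) \in V(\tau,\mu(\tau))$ satisfying $\dsf_{\sup}(v^*,w^*) \leq L(\tau) g(\tau)$, yields upon substitution of $(\zeta_\tau,\xi_\tau) = (1,w^*)$ and the decomposition $v^*(x) - w^*(y) = (v^*(x) - v^*(y)) + (v^*(y) - w^*(y))$, together with Hypothesis \ref{hyp:CI}-$(iii)$ and H\"older's inequality, the pointwise estimate $\dot g(\tau) \leq (l(\tau) + L(\tau)) g(\tau)$. Gronwall's lemma combined with $g(t_1) = 0$ then forces $g \equiv 0$ on $[t_1, t_1 + \delta)$, contradicting the supposition and establishing $\mu(t) \in \Qpazo(t)$ for all $t \in [0,T]$.
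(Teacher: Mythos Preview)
Your overall scaffold—tracking the distance of the single fixed trajectory $\mu(\cdot)$ to the tube, localising to $\Qpazo_R$, invoking Theorem~\ref{thm:Limsup} along the contingent sequences to produce $v_\tau^\epsilon \in V(\tau,\mu(\tau))$, and closing via Gr\"onwall—matches the paper's proof exactly. The point where you diverge is the treatment of the circularity in the substitution $(\zeta_\tau,\xi_\tau)=(1,w_\tau^\epsilon)$, and here you overcomplicate matters.

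The paper does not introduce any fixed-point machinery. After obtaining
\[
\zeta_\tau\, g^{p-1}(\tau)\dot g(\tau)\le \INTDom{\langle \zeta_\tau v_\tau^\epsilon(x)-\xi_\tau(y),j_p(x-y)\rangle}{\R^{2d}}{\gamma_\tau(x,y)}+\epsilon
\]
for $(\zeta_\tau,\xi_\tau)\in T_{\Graph(\Qpazo)}(\tau,\nu_\tau)$, it simply asserts that the right-hand side is linear and continuous in $(\zeta_\tau,\xi_\tau)$ and therefore extends to $\co T_{\Graph(\Qpazo)}(\tau,\nu_\tau)$; it then passes from $v_\tau^\epsilon$ to a nearby $w_\tau^\epsilon\in V(\tau,\nu_\tau)$ via Hypothesis~\ref{hyp:CI}-$(iv)$ and substitutes $(\zeta_\tau,\xi_\tau)=(1,w_\tau^\epsilon)$, which is admissible precisely because the invariance hypothesis~\eqref{eq:InvarianceCond} places \emph{every} element of $\{1\}\times V(\tau,\nu_\tau)$ in the closed convex hull. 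The resulting bound $\dot g(\tau)\le (l(\tau)+L(\tau))g(\tau)+\epsilon$ is insensitive to which $w_\tau^\epsilon$ arose, so the apparent feedback between direction and velocity does not obstruct the conclusion.

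Your Kakutani--Fan argument is a legitimate alternative route to dissolve the circularity, but it requires you to verify upper-semicontinuity of the correspondence $w\mapsto\{w'\in V(\tau,\nu_\tau):\dsf_{\sup}(v(w),w')\le L(\tau)g(\tau)\}$, which is delicate because the assignment $w\mapsto v(w)$ coming out of Theorem~\ref{thm:Limsup} is neither single-valued nor a priori continuous. Note also that your appeal to ``Step 2 of Theorem~\ref{thm:ViabTubeAC}'' is not directly available here: that convexification trick exploits the \emph{free} choice of $v_\tau$ supplied by Theorem~\ref{thm:Liminf} on the nonnegative-$\zeta$ summands, whereas in the invariance setting every $v_\tau^\epsilon$ is determined by Theorem~\ref{thm:Limsup}. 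The paper bypasses both issues by treating the linearity extension and the substitution as a single direct step.
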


\begin{proof}
In what follows, we let $\Tcal \subset (0,T)$ and $R>0$ be given as in the proof of Theorem \ref{thm:ViabTubeAC} above, and assume without loss of generality that $(\tau,\mu_{\tau}) = (0,\mu^0)$ for some $\mu^0 \in \Qpazo(0)$. Given an arbitrary curve $\mu(\cdot) \in \Spazo_{[0,T]}(\mu^0)$, our goal is to show that the distance function, defined by 
\begin{equation*}
g(t) := \dist_{\Pcal_p(\R^d)} \big( \mu(t) \, ; \Qpazo(t) \big) 
\end{equation*}
for all times $t \in [0,T]$, is identically equal to zero. Note that since $\Qpazo : [0,T] \tto \Pcal_p(\R^d)$ is absolutely continuous, it can be easily verified that $g(\cdot) \in \AC([0,T],\R_+)$, and we denote by $\Dcal \subset (0,T)$ the subset of full $\Lcal^1$-measure over which it is differentiable. We posit by contradiction that $g(t) = 0$ for some $t \in [0,T]$ and that there exists $\delta > 0$ such that $g(\tau) > 0$ for all $\tau \in (t,t+\delta)$. Observe now that by the compactness of $\Qpazo_R(\tau)$, there exists an element $\nu_{\tau} \in \Qpazo_R(\tau)$ such that 
\begin{equation*}
g(\tau) = W_p(\mu(\tau),\nu_{\tau}).
\end{equation*}
Besides, owing to the choice of $R>0$ made via \eqref{eq:RViabDef}, it holds for every pair of contingent directions $(\zeta_{\tau},\xi_{\tau}) \in T_{\Graph(\Qpazo)}(\tau,\nu_{\tau})$ that
\begin{equation*}
\dist_{\Pcal_p(\R^d)} \Big( (\Id + h_i \xi_{\tau})_{\sharp} \nu_{\tau} \, ; \Qpazo_R(\tau + h_i \zeta_{\tau}^{\,i}) \Big) = o_{\tau}(h_i) 
\end{equation*}
along two given sequences $h_i \to 0^+$ and $\zeta_{\tau}^{\, i} \to \zeta_{\tau}$. By Theorem \ref{thm:Limsup}, there exists for every $\epsilon > 0$ some $v_{\tau}^{\epsilon} \in V(\tau,\mu(\tau))$ such that
\begin{equation*}
W_p \Big( \mu(\tau + h_{i_k}^{\epsilon} \zeta_{\tau}^{\, i_k}) , (\Id + h_{i_k}^{\epsilon} \zeta_{\tau}^{\, i_k} v_{\tau}^{\epsilon})_{\sharp} \mu(\tau) \Big) \leq \epsilon h_{i_k}^{\epsilon} |\zeta_{\tau}^{\, i_k}| + o_{\tau,\epsilon}(h_{i_k}^{\epsilon})
\end{equation*}
along a subsequence $h_{i_k}^{\epsilon} \to 0^+$. By following the arguments leading to the differential inequality \eqref{eq:ViabTubeDiffEst} on $\tfrac{1}{p} g^p(\cdot)$ in the proof of Theorem \ref{thm:ViabTubeAC}, one can derive the estimate
\begin{equation*}
\zeta_{\tau} \, g^{p-1}(\tau) \dot g(\tau) \leq \INTDom{\big\langle \zeta_{\tau} v_{\tau}^{\epsilon}(x) - \xi_{\tau}(y) , j_p(x-y) \big\rangle}{\R^{2d}}{\gamma_{\tau}(x,y)} + \epsilon
\end{equation*}
for all $\gamma_{\tau} \in \Gamma_o(\mu(\tau),\nu_{\tau})$, up to rescaling $\epsilon >0$. Since the latter expression is linear and continuous with respect to $(\zeta_{\tau},\xi_{\tau}) \in T_{\Graph(\Qpazo)}(\tau,\nu_{\tau})$, it holds more generally for elements of $\co T_{\Graph(\Qpazo)}(\tau,\nu_{\tau})$. There now remains to observe that, by Hypothesis \ref{hyp:CI}-$(iv)$, there exists $w_{\tau}^{\epsilon} \in V(\tau,\nu_{\tau})$ such that 
\begin{equation*}
\dsf_{\sup}(v_{\tau}^{\epsilon},w_{\tau}^{\epsilon}) \leq L(\tau) W_p(\mu(\tau),\nu_{\tau}), 
\end{equation*}
which together with Hypothesis \ref{hyp:CI}-$(iii)$, the definition \eqref{eq:DualityDef} of the duality map $j_p : \Lpazo^p(\R^d,\R^d;\mu) \to \Lpazo^q(\R^d,\R^d;\mu)$ and the fact that $\gamma_{\tau} \in \Gamma_o(\mu(\tau),\nu_{\tau})$ yields the differential estimate 
\begin{equation*}
\zeta_{\tau} \, g^{p-1}(\tau) \dot g(\tau) \leq \INTDom{\big\langle \zeta_{\tau} w_{\tau}^{\epsilon}(y) - \xi_{\tau}(y) , j_p(x-y) \big\rangle}{\R^{2d}}{\gamma_{\tau}(x,y)} + \zeta_{\tau} \Big( l(\tau) + L(\tau) \Big) g^p(\tau) + \epsilon,
\end{equation*}
which is valid whenever $\zeta_{\tau} > 0$. Thence, choosing in particular 
\begin{equation*}
(\zeta_{\tau},\xi_{\tau}) = (1,w_{\tau}^{\epsilon}) \in \big( \{1\} \times V(\tau,\nu_{\tau}) \big) \subset \co T_{\Graph(\Qpazo)}(\tau,\nu_{\tau}), 
\end{equation*}
which is licit under our standing assumption \eqref{eq:InvarianceCond}, one can deduce that
\begin{equation*}
\dot g(\tau) \leq \Big( l(\tau) + L(\tau) \Big) g(\tau) + \epsilon, 
\end{equation*}
which finally yields $g(\tau) = 0$ for each $\tau \in (t,t+\delta)$ by applying Gr\"onwall's lemma while noting that $\epsilon > 0$ is arbitrary. This contradicts our initial choice of $t \in [0,T]$. 
\end{proof}

In the following theorem, we finally present sufficient viability and invariance conditions for constraints tubes which are merely left absolutely continuous. In order to treat this less regular case, we shall see that one must relinquish the convexification of the contingent directions which was available both in Theorem \ref{thm:ViabStationary} and Theorem \ref{thm:ViabTubeAC} above. 

\begin{thm}[Sufficient viability and invariance conditions for left absolutely continuous tubes]
\label{thm:ViabTubeACL}
Suppose that $p \in (1,+\infty)$, let $V : [0,T] \times \Pcal_p(\R^d) \tto C^0(\R^d,\R^d)$ be a set-valued map with convex images satisfying Hypotheses \ref{hyp:CI}, and $\Qpazo : [0,T] \tto \Pcal_p(\R^d)$ be a left absolutely continuous constraints tube with proper images such that 
\begin{equation}
\label{eq:LeftAbsViabCond}
\big( \{1\} \times V(t,\nu) \big) \cap T_{\Graph(\Qpazo)}(t,\nu) \neq \emptyset
\end{equation}
for $\Lcal^1$-almost every $t \in [0,T]$ and each $\nu \in \Qpazo(t)$. Then $\Qpazo : [0,T] \tto \Pcal_p(\R^d)$ is viable for \eqref{eq:CauchyViab}. Analogously, if the constraints tube satisfies the stronger condition 
\begin{equation}
\label{eq:LeftAbsInvCond}
\big( \{1\} \times V(t,\nu) \big) \subset T_{\Graph(\Qpazo)}(t,\nu)
\end{equation} 
for $\Lcal^1$-almost every $t \in [0,T]$ and each $\nu \in \Qpazo(t)$, then $\Qpazo : [0,T] \tto \Pcal_p(\R^d)$ is invariant for \eqref{eq:CauchyViab}.
\end{thm}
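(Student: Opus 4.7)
Plan: The proof follows the scheme of Theorems \ref{thm:ViabStationary} and \ref{thm:ViabTubeAC}, with two key simplifications relative to the latter: the assumption \eqref{eq:LeftAbsViabCond} directly provides a tangent direction with positive time-component $\zeta_\tau = 1$, so no convexification of the contingent cone is needed, but $g$ is now only handled via one-sided Dini derivatives because $\Qpazo : [0,T] \tto \Pcal_p(\R^d)$ inherits only left absolute continuity. Reduce without loss of generality to $(\tau,\mu_\tau) = (0,\mu^0)$ with $\mu^0 \in \Qpazo(0)$, set $g(t) := \dist_{\Pcal_p(\R^d)}(\Rpazo_{(0,t)}(\mu^0)\,;\Qpazo(t))$, and localise via $\Qpazo_R(\tau) := \Qpazo(\tau) \cap \B_{\Pcal_p(\R^d)}(\mu^0,R)$ for a suitable $R > 0$ chosen as in Theorem \ref{thm:ViabTubeAC}. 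By Lemma \ref{lem:ACReach} and the second part of Proposition \ref{prop:AC}, the set-valued map $\Ecal(t) := [g(t),+\infty)$ is left absolutely continuous, yielding $m(\cdot) \in L^1([0,T],\R_+)$ with $g(t) - g(\tau) \leq \int_\tau^t m(s)\,ds$ for $0 \leq \tau \leq t \leq T$; in particular $g$ is right continuous.

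Proceed by contradiction: fix $t \in [0,T)$ and $\delta > 0$ with $g(t) = 0$ and $g > 0$ on $(t,t+\delta)$, and select $\tau \in (t,t+\delta) \cap \Tcal$ in the full-measure subset where Theorem \ref{thm:Liminf}, Hypotheses \ref{hyp:CI}-(ii)--(iv), and the Lebesgue points of $m(\cdot),l(\cdot),L(\cdot)$ all concur. By compactness of the proper images, pick $\mu_\tau \in \Rpazo_{(0,\tau)}(\mu^0)$ and $\nu_\tau \in \Qpazo_R(\tau)$ with $g(\tau) = W_p(\mu_\tau,\nu_\tau)$. Assumption \eqref{eq:LeftAbsViabCond} produces $w_\tau \in V(\tau,\nu_\tau)$ with $(1,w_\tau) \in T_{\Graph(\Qpazo)}(\tau,\nu_\tau)$, so by \eqref{eq:ContingentGraphCharac} there exist $h_i \to 0^+$ and $\zeta_\tau^i \to 1$ such that $\dist_{\Pcal_p(\R^d)}((\Id + h_i w_\tau)_\sharp \nu_\tau\,;\Qpazo(\tau + h_i \zeta_\tau^i)) = o_\tau(h_i)$. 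Because $\zeta_\tau^i$ is eventually positive, Theorem \ref{thm:Liminf} alone suffices, in contrast to the AC case of Theorem \ref{thm:ViabTubeAC}: use Hypothesis \ref{hyp:CI}-(iv) to pick $v_\tau \in V(\tau,\mu_\tau)$ with $\dsf_{\sup}(v_\tau,w_\tau) \leq L(\tau) g(\tau)$, and obtain $\mu_\epsilon \in \Spazo_{[\tau,T]}(\tau,\mu_\tau)$ with $W_p(\mu_\epsilon(\tau+h),(\Id + h v_\tau)_\sharp \mu_\tau) \leq \epsilon h$ for $h \in [0,h_\epsilon]$.

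Inserting $\mu_\epsilon(\tau + h_i \zeta_\tau^i) \in \Rpazo_{(0,\tau + h_i \zeta_\tau^i)}(\mu^0)$ and $(\Id + h_i w_\tau)_\sharp \nu_\tau$ as a near-point of $\Qpazo(\tau + h_i \zeta_\tau^i)$, the triangle inequality together with Lemma \ref{lem:PnormEst} for the $p$-th power gives
\begin{equation*}
\tfrac{1}{p} g^p(\tau + h_i \zeta_\tau^i) - \tfrac{1}{p} W_p^p(\mu_\tau,\nu_\tau) \leq \tfrac{1}{p} W_p^p\bigl((\Id + h_i \zeta_\tau^i v_\tau)_\sharp \mu_\tau,(\Id + h_i w_\tau)_\sharp \nu_\tau\bigr) - \tfrac{1}{p} W_p^p(\mu_\tau,\nu_\tau) + \epsilon h_i + o_{\tau,\epsilon}(h_i).
\end{equation*}
Applying Proposition \ref{prop:SuperdiffWass} at an optimal plan $\gamma_\tau \in \Gamma_o(\mu_\tau,\nu_\tau)$ and inserting the crossed terms $v_\tau(x) - v_\tau(y)$ and $v_\tau(y) - w_\tau(y)$ as in \eqref{eq:ViabCompact61}, Hypotheses \ref{hyp:CI}-(iii)--(iv) and $\dsf_{\sup}(v_\tau,w_\tau) \leq L(\tau) g(\tau)$ bound the right-hand side by $h_i \zeta_\tau^i [l(\tau) + L(\tau)] g^p(\tau) + C \epsilon h_i + o_{\tau,\epsilon}(h_i)$. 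Dividing by $h_i \zeta_\tau^i$, letting $i \to +\infty$ and then $\epsilon \to 0^+$ yields $\liminf_{h \to 0^+} (g^p(\tau+h) - g^p(\tau))/h \leq p[l(\tau)+L(\tau)]\, g^p(\tau)$ for almost every $\tau \in (t,t+\delta)$. Combined with the right continuity of $g$ and $g(t)=0$, a Gronwall-type comparison for right-continuous nonnegative functions with a pointwise linear bound on the lower right Dini derivative forces $g \equiv 0$ on $[t,t+\delta)$, a contradiction; a viable curve is then extracted by the dyadic concatenation argument of Step 2 in Theorem \ref{thm:ViabStationary}. The invariance claim is identical, using Theorem \ref{thm:Limsup} in place of Theorem \ref{thm:Liminf} to select $v_\tau^\epsilon \in V(\tau,\mu(\tau))$ representing the infinitesimal behaviour of a fixed curve $\mu \in \Spazo_{[0,T]}(\mu^0)$ along a subsequence, then invoking Hypothesis \ref{hyp:CI}-(iv) and assumption \eqref{eq:LeftAbsInvCond} to produce a corresponding tangent $(1,w_\tau^\epsilon) \in T_{\Graph(\Qpazo)}(\tau,\nu_\tau)$. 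The main obstacle is precisely this Gronwall step: because $g$ is only right continuous with a one-sided $L^1$ modulus rather than absolutely continuous, the classical a.e.\ differentiation argument of Theorem \ref{thm:ViabTubeAC} is unavailable, and one must rely on the (classical but delicate) comparison principle for right-continuous nonnegative functions whose lower right Dini derivative satisfies a pointwise linear bound in the function value almost everywhere.
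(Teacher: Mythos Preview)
Your overall strategy coincides with the paper's: same reduction to $(0,\mu^0)$, same distance function $g$, same localisation $\Qpazo_R$, same derivation of the Dini-type estimate
\[
\liminf_{h\to 0^+}\frac{g(\tau+h)-g(\tau)}{h}\leq (l(\tau)+L(\tau))\,g(\tau)
\]
for $\Lcal^1$-a.e.\ $\tau$, and the same dyadic extraction of a viable curve. The genuine difference lies in how the final comparison step is closed. You invoke a ``Gr\"onwall-type comparison for right-continuous nonnegative functions with a pointwise linear bound on the lower right Dini derivative''; the paper instead reformulates this step as a one-dimensional viability problem for the epigraph tube $\Ecal(\tau)=[g(\tau),+\infty)$ under the scalar field $f(\tau,\alpha)=(l(\tau)+L(\tau))\alpha$, checks that $f(\tau,\alpha)\in\{\xi:(1,\xi)\in T_{\Graph(\Ecal)}(\tau,\alpha)\}$ from the Dini estimate, and then applies the measurable viability theorem of \cite[Theorem~4.2]{Frankowska1995} to produce a viable solution of $\dot\alpha=(l+L)\alpha$, $\alpha(t)=0$ inside $\Ecal$, which forces $g\equiv 0$.

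Two remarks on your version. First, the one-sided modulus $g(t)-g(\tau)\leq\int_\tau^t m$ only yields upper semicontinuity of $g$ from the right, not full right continuity as you claim; this does not break the argument, but it means the comparison principle you need is genuinely the one for functions with a one-sided $L^1$ upper modulus plus an a.e.\ lower Dini bound, and a bare right-continuity hypothesis would be neither available nor sufficient (Cantor-type examples show that an a.e.\ Dini bound alone does not control increments). Second, the paper's reduction is exactly the precise statement of that comparison principle: the classical viability theorem for left absolutely continuous tubes in $\R$ is the tool that turns the Dini inequality plus the left AC of $\Ecal$ into $g\equiv 0$. So your black-box ``classical but delicate'' step and the paper's explicit viability reduction are the same result; the paper simply names it and supplies the reference.
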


\begin{proof}
In what follows, we only prove the viability of $\Qpazo : [0,T] \tto \Pcal_p(\R^d)$ under the sufficient condition \eqref{eq:LeftAbsViabCond}, as its invariance under \eqref{eq:LeftAbsInvCond} follows from a straightforward transposition of the method detailed previously in the proof of Theorem \ref{thm:InvarianceTube}. 

Without loss of generality, we assume that $(\tau,\mu_{\tau}) := (0,\mu^0)$ for some $\mu^0 \in \Qpazo(0)$ and let $\Tcal \subset (0,T)$ be the set of full $\Lcal^1$-measure such that the statements of Theorem \ref{thm:Liminf} as well as Hypotheses \ref{hyp:CI}-$(ii)$, $(iii)$ and $(iv)$ hold. Since the constraints tube $\Qpazo : [0,T] \tto \Pcal_p(\R^d)$ is merely left absolutely continuous, the distance function
\begin{equation*}
g : t \in [0,T] \mapsto \dist_{\Pcal_p(\R^d)} \Big( \Rpazo_{(0,t)}(\mu^0) \, ; \Qpazo(t) \Big)
\end{equation*}
is not absolutely continuous in general. To estimate its local variations, we cannot resort to the usual Gr\"onwall approach detailed above, and need to carry out a subtler viability analysis on its epigraph. By contradiction, assume that there exist $t \in [0,T]$ and $\delta > 0$ such that $g(t) = 0$ and $g(\tau) > 0$ for $\tau \in (t,t+\delta)$. By combining the arguments detailed in the proof of Theorem \ref{thm:ViabTubeAC} with \eqref{eq:LeftAbsViabCond}, one can check that for each $\tau \in (t,t+\delta) \cap \Tcal$, it holds that
\begin{equation}
\label{eq:DirectionalDerivativeIneq}
\liminf_{h \to 0^+} \frac{g(\tau+h) - g(\tau)}{h} \leq \Big( l(\tau) + L(\tau) \Big) g(\tau).
\end{equation}
Consider now the auxiliary real-valued tube defined by 
\begin{equation*}
\Ecal(\tau) := \Big\{ \alpha \in \R_+ ~\,\text{s.t.}~ \alpha = g(\tau) + r ~~ \text{for some $r \geq 0$} \Big\}, 
\end{equation*}
as well as the 1-dimensional linear vector field
\begin{equation*}
f(\tau,\alpha) := \Big(l(\tau) + L(\tau) \Big) \alpha
\end{equation*}
given for all times $\tau \in [t,t+\delta]$ and each $\alpha \in \R$, and notice that $\Ecal : [t,t+\delta] \tto \R$ is left absolutely continuous by Proposition \ref{prop:AC}. Given some $\tau \in (t,t+\delta) \cap \Tcal$, remark that if $\alpha \in \Ecal(\tau)$ is such that $\alpha = g(\tau)$, it can then be deduced from \eqref{eq:DirectionalDerivativeIneq} and \cite[Proposition 5.1.4]{Aubin1990} that 
\begin{equation}
\label{eq:CotingentDer1}
f(\tau,g(\tau)) = \Big( l(\tau) + L(\tau) \Big) g(\tau) \in \Big\{ \xi \in \R ~\, \textnormal{s.t.}~ (1,\xi) \in T_{\Graph(\Ecal)}(\tau,g(\tau)) \Big\}. 
\end{equation}
On the other hand, if $\alpha \in \Ecal(\tau)$ is such that $\alpha > g(\tau)$, it is then clear that
\begin{equation}
\label{eq:CotingentDer2}
f(\tau,\alpha)  \in \Big\{ \xi \in \R ~\, \textnormal{s.t.}~ (1,\xi) \in T_{\Graph(\Ecal)}(\tau,\alpha) \Big\} = \R. 
\end{equation}
Whence, by combining \eqref{eq:CotingentDer1} and \eqref{eq:CotingentDer2}, one then obtains 
\begin{equation*}
f(\tau,\alpha) \in \Big\{ \xi \in \R ~\, \textnormal{s.t.}~ (1,\xi) \in T_{\Graph(\Ecal)}(\tau,\alpha) \Big\}
\end{equation*}
for $\Lcal^1$-almost every $\tau \in (t,t+\delta)$ and each $\alpha \in \Ecal(\tau)$. Thus, observing that $g(t) = 0 \in \Ecal(t)$, the classical measurable viability theorem of \cite[Theorem 
4.2]{Frankowska1995} yields the existence of a curve $\alpha(\cdot) \in \AC([t,t+\delta],\R)$ solution of the Cauchy problem
\begin{equation}
\label{eq:CauchyPbViab}
\left\{
\begin{aligned}
\dot \alpha(\tau) & = \Big( l(\tau) + L(\tau) \Big) \alpha(\tau), \\
\alpha(t) & = 0, 
\end{aligned}
\right.
\end{equation}
such that $\alpha(\tau) \in \Ecal(\tau)$ for all times $\tau \in [t,t+\delta]$. Noting that the unique solution of \eqref{eq:CauchyPbViab} is identically equal to zero, we conclude that $g(\tau) = 0$ on that same interval, which contradicts our initial choice of $t \in [0,T]$. From there, the existence of a viable measure can be obtained by repeating the argument in Step 2 of the proof of Theorem \ref{thm:ViabStationary} above.
\end{proof}


\section{Examples of constraints sets and computations of tangents}
\label{section:Cone}

In this section, we provide two examples of proper constraint sets $\Qpazo \subset \Pcal_p(\R^d)$ which frequently arise in applications, and compute in each case some relevant subsets of tangent directions.

\paragraph*{Constraints sets defined by support inclusions.}

In this first example, we start by considering the prototypical case in which the constraints set is given as
\begin{equation*}
\Qpazo_K := \Big\{ \mu \in \Pcal_p(\R^d) ~\,\text{s.t.} ~ \supp(\mu) \subset K \Big\}, 
\end{equation*}
for some compact set $K \subset \R^d$, where $\supp(\mu) \subset \R^d$ denotes the \textit{support} of $\mu \in \Pcal(\R^d)$, defined by 
\begin{equation*}
\supp(\mu) := \bigg\{ x \in \R^d ~\, \textnormal{s.t.}~ \mu(\Npazo_x) > 0 ~\textnormal{for each neighbourhood $\Npazo_x$ of $x$ in $\R^d$} \bigg\}.
\end{equation*}
By Proposition \ref{prop:Wass}, it can easily be checked that $\Qpazo_K \subset \Pcal_p(\R^d)$ is compact and thus proper. In what ensues, we fully characterise a nice subset of the \textit{adjacent cone} (see e.g. \cite[Definition 4.1.5]{Aubin1990} for a general definition) to $\Qpazo_K$ at some $\mu \in \Qpazo$, defined in our context by 
\begin{equation}
\label{eq:AdjacentCone}
T^{\flat}_{\Qpazo_K}(\mu) := \bigg\{ \xi \in \Lpazo^p(\R^d,\R^d;\mu) ~\, \text{s.t.}~ \lim_{h \to 0^+} \tfrac{1}{h} \dist_{\Pcal_p(\R^d)} \Big( (\Id + h \xi)_{\sharp} \mu \, ; \Qpazo_K \Big) = 0 \bigg\}. 
\end{equation}
Notice in particular that $T^{\flat}_{\Qpazo_K}(\mu) \subset T_{\Qpazo_K}(\mu)$ by construction. 

\begin{prop}[Computation of adjacent directions to $\Qpazo_K$]
\label{prop:AdjacentCone}
For every $\mu \in \Qpazo_K$, it holds that 
\begin{equation*}
\bigg\{ \xi \in \Lpazo^p(\R^d,\R^d;\mu) ~\, \textnormal{s.t.}~ \xi(x) \in T^{\flat}_K(x) ~\text{for $\mu$-almost every $x \in K$} \bigg\} \subset T^{\flat}_{\Qpazo_K}(\mu) 
\end{equation*}
where $T^{\flat}_K(x)$ denotes the standard adjacent cone to $K \subset \R^d$ at $x \in K$.
\end{prop}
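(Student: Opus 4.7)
The idea is to show that for each small $h>0$, one can construct a Borel measurable map $\pi_h : \R^d \to K$ such that the pushforward $(\pi_h)_{\sharp} \mu$ lies in $\Qpazo_K$ and approximates $(\Id + h\xi)_{\sharp} \mu$ up to an error $o(h)$ with respect to $W_p$. Indeed, since $(\pi_h)_{\sharp}\mu \in \Qpazo_K$, it will then follow from the standard Wasserstein estimate \eqref{eq:WassEst} that
$$
\dist_{\Pcal_p(\R^d)} \Big( (\Id + h\xi)_{\sharp} \mu \, ; \Qpazo_K \Big) \leq W_p \Big( (\Id + h\xi)_{\sharp} \mu , (\pi_h)_{\sharp} \mu \Big) \leq \NormLp{\pi_h - (\Id + h\xi)}{p}{\R^d,\R^d \, ; \mu},
$$
and the whole proof reduces to showing that the rightmost quantity is $o(h)$ as $h \to 0^+$, which, combined with the defining relation \eqref{eq:AdjacentCone}, yields $\xi \in T^{\flat}_{\Qpazo_K}(\mu)$.

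For the construction of $\pi_h$, I would first invoke a Borel measurable selection $p_K : \R^d \to K$ of the metric projection onto the compact set $K$, whose existence follows by applying Theorem \ref{thm:MeasurableSel} to the upper-semicontinuous and compact-valued projection multifunction. This map satisfies $|p_K(y) - y| = \dist(y;K)$ for every $y \in \R^d$. Setting then $\pi_h(x) := p_K(x + h\xi(x))$, which is Borel measurable as a composition of Borel maps, one has $\pi_h(x) \in K$ and $|\pi_h(x) - x - h\xi(x)| = \dist(x + h\xi(x);K)$ for all $x \in \R^d$. Since $\mu$-almost every $x$ belongs to $\supp(\mu) \subset K$, the trivial choice $x \in K$ in the definition of the distance yields the uniform bound
$$
\tfrac{1}{h} \big| \pi_h(x) - x - h\xi(x) \big| \leq |\xi(x)|
$$
for $\mu$-almost every $x$ and every $h > 0$. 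In parallel, for those same $x$, the membership $\xi(x) \in T^{\flat}_K(x)$ gives $\tfrac{1}{h} \dist(x+h\xi(x);K) \to 0$ as $h \to 0^+$, whence the pointwise convergence $\tfrac{1}{h} | \pi_h(x) - x - h\xi(x) | \to 0$ for $\mu$-a.e. $x$. As the dominating function $|\xi(\cdot)|$ lies in $\Lpazo^p(\R^d,\R^d;\mu)$ by assumption, the dominated convergence theorem yields
$$
\tfrac{1}{h} \NormLp{\pi_h - (\Id + h\xi)}{p}{\R^d,\R^d \, ; \mu} ~\underset{h \to 0^+}{\longrightarrow}~ 0,
$$
which concludes the proof.

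The main technical point in this argument is the passage from the pointwise adjacency estimate encoded in the definition of $T^{\flat}_K(x)$ to the $\Lpazo^p$ convergence of the difference quotient $\tfrac{1}{h}(\pi_h - \Id - h\xi)$: one needs both pointwise convergence and a $h$-uniform integrable majorant for the family of quotients. This is the one place where the compactness of $K$ plays no role, and where everything rests on the rather elementary observation that the support of $\mu$ itself is contained in $K$, yielding the trivial bound $\dist(x+h\xi(x);K) \leq h|\xi(x)|$ that serves as a dominant.
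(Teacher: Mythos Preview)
Your proof is correct and follows the same route as the paper's: build a measurable nearest-point map from $x+h\xi(x)$ into $K$, bound the resulting error pointwise by $h|\xi(x)|$, and conclude via dominated convergence. The only difference is cosmetic---the paper passes through a complete extension $\hat\mu$ of $\mu$ before extracting the selection and then recovers a Borel representative, whereas you claim a Borel selection of the metric projection directly; this is valid, though note that Theorem~\ref{thm:MeasurableSel} as stated in the paper is formulated for set-valued maps on a real interval, so a more general selection principle (e.g.\ Kuratowski--Ryll-Nardzewski, or \cite[Theorem~8.1.3]{Aubin1990} in full generality) is implicitly being invoked.
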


\begin{proof}
Let $\xi \in \Lpazo^p(\R^d,\R^d;\mu)$ be such that $\xi(x) \in T^{\flat}_K(x)$ for $\mu$-almost every $x \in K$ and $h>0$ be given. In addition, denote by $\hat{\mu}$ any complete extension of the Borel measure $\mu$ given e.g. by \cite[Theorem 1.36]{Rudin1987}, and observe that the map $x \mapsto x + h\xi(x)$ is Borel and thus $\hat{\mu}$-measurable, see for instance \cite[Definition 1.12]{AmbrosioFuscoPallara}. Then, it follows from \cite[Theorem 8.2.11]{Aubin1990} that the set-valued mapping
\begin{equation*}
\Dpazo_K : x \in K \tto \underset{y \in K}{\textnormal{argmin}} \, |x + h \xi(x) - y| \subset K 
\end{equation*}
is $\hat{\mu}$-measurable as well. Because the latter has closed images and since the $\sigma$-algebra of $\hat{\mu}$-measurable sets is complete, it follows from \cite[Theorem 8.1.3]{Aubin1990} that there exists a $\hat{\mu}$-measurable selection $x \in K \mapsto \hat{d}_K(x) \in \Dpazo_K(x) \subset K$ which satisfies
\begin{equation*}
|x + h \xi(x) - \hat{d}_K(x)| = \dist_{\R^d} \big( x + h \xi(x) \, ; K \big) = o_x(h)
\end{equation*}
for $\hat{\mu}$-almost every $x \in K$ as $h \to 0^+$ , where $|o_x(h)| \leq h |\xi(x)|$. Moreover, by \cite[Proposition 2.1.11]{Bogachev}, the latter coincides with a Borel map  $d_K : K \to K$ outside of a Borel set of zero $\hat{\mu}$-measure. Therefore, noting in turn that 
\begin{equation*}
\supp(d_{K \, \sharp} \mu) \subset \overline{ \Big\{ d_K(x) ~\, \textnormal{s.t.}~ x \in \supp(\mu) \Big\}}^{\, \R^d} \subset K
\end{equation*}
since $d_K^{-1}(B(y,\epsilon)) = \emptyset$ for every $y \in \R^d \setminus K$ and $\epsilon >0$ for which $B(y,\epsilon) \cap K = \emptyset$, because $K \subset \R^d$ is closed, it further holds that
\begin{equation*}
\begin{aligned}
\dist_{\Pcal_p(\R^d)} \Big( (\Id + h\xi)_{\sharp} \mu \, ; \Qpazo_K \Big) & \leq W_p \Big( (\Id + h \xi)_ {\sharp} \mu , d_{K \, \sharp} \mu \Big)  \\
& \leq \; \NormLp{\Id + h \xi - d_K}{p}{\R^d,\R^d; \,\mu} ~=~ o(h)
\end{aligned}
\end{equation*}
as $h \to 0^+$ by Lebesgue's dominated convergence theorem. This concludes the proof by definition \eqref{eq:AdjacentCone} of the adjacent cone  $T_{\Qpazo_K}^{\flat}(\mu)$.  
\end{proof}

It is possible to generalise this example to time-dependent tubes $K : [0,T] \tto \R^d$. In this context, we define the constraints sets by 
\begin{equation*}
\Qpazo_K(t) := \Big\{ \mu \in \Pcal_p(\R^d) ~\, \textnormal{s.t.}~ \supp(\mu) \subset K(t) \Big\}
\end{equation*}
for all times $t \in [0,T]$. In what follows, we treat the case in which $K : [0,T] \tto \R^d$ is left absolutely continuous with nonempty compact images. 

\begin{prop}[Regularity in time of the constraints]
Under our assumptions on $K : [0,T] \tto \R^d$, the constraints tube $\Qpazo_K : [0,T] \tto \Pcal_p(\R^d)$ is left absolutely continuous. 
\end{prop}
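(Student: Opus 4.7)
The plan is to build, for each $\mu \in \Pcal_p(\R^d)$ and $R>0$, an admissible modulus $m_{\mu,R} \in L^1([0,T],\R_+)$ out of the moduli $m_{0,R'}^K$ witnessing the left absolute continuity of $K$. Given $0 \leq \tau \leq t \leq T$ and a test measure $\nu_\tau \in \Qpazo_K(\tau) \cap \B_{\Pcal_p(\R^d)}(\mu,R)$, I would produce an explicit competitor $\nu_t \in \Qpazo_K(t)$ via a measurable closest-point projection. Arguing exactly as in Proposition \ref{prop:AdjacentCone} --- applying Theorem \ref{thm:MeasurableSel} to the set-valued argmin map $x \in K(\tau) \tto \mathrm{argmin}_{y \in K(t)}|x-y| \subset K(t)$, using that $K(t)$ is compact, and passing to a Borel-equivalent map via \cite[Proposition 2.1.11]{Bogachev} --- yields a Borel selection $d_t : K(\tau) \to K(t)$ satisfying $|x - d_t(x)| = \dist_{\R^d}(x\,;K(t))$ for $\nu_\tau$-almost every $x$. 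The pushforward $\nu_t := d_{t\,\sharp}\nu_\tau$ has support in $K(t)$, so belongs to $\Qpazo_K(t)$, and the plan $(\Id,d_t)_\sharp \nu_\tau$ gives the baseline upper bound $W_p(\nu_\tau,\nu_t)^p \leq \int_{\R^d} \dist_{\R^d}(x\,;K(t))^p \, d\nu_\tau(x)$.

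The next step is to dominate this integral by $\left(\int_\tau^t m_{\mu,R}(s)\,ds\right)^p$ with $m_{\mu,R} \in L^1$. I would first establish that $\bigcup_{s\in[0,T]} K(s)$ is contained in a single bounded set: fixing $y_0 \in K(0)$, the left absolute continuity of $K$ applied at $x=y_0$ and any $R_0>0$ yields, for every $t$, a point of $K(t)$ within $\|m_{y_0,R_0}^K\|_1$ of $y_0$, and a symmetric comparison with the compact image $K(T)$ pins down a uniform radius $R_*$ such that $K(s) \subset \B_{\R^d}(0,R_*)$ for every $s \in [0,T]$. With this uniform bound secured, the support of any $\nu_\tau \in \Qpazo_K(\tau)$ automatically lies in $\B_{\R^d}(0,R_*)$, and for every $x$ in that ball one has $\dist_{\R^d}(x\,;K(t)) \leq \Delta_{0,R_*}(K(\tau);K(t)) \leq \int_\tau^t m_{0,R_*}^K(s)\,ds$. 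Integrating against $\nu_\tau$ and taking $p$-th roots gives $W_p(\nu_\tau,\nu_t) \leq \int_\tau^t m_{0,R_*}^K(s)\,ds$, so the choice $m_{\mu,R} := m_{0,R_*}^K$ closes the argument. Note that the resulting modulus depends on $\mu$ and $R$ only through the uniform radius $R_*$, which is itself determined purely by the geometry of $K$; the dependence on $R$ therefore disappears, which is consistent with the constraints tube $\Qpazo_K$ being globally left absolutely continuous.

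The main obstacle is the preliminary step of proving that $\bigcup_{s\in[0,T]} K(s)$ is bounded from the single hypothesis that $K$ is left absolutely continuous with compact images. Left absolute continuity directly controls how far points of $K(\tau)$ can lie from $K(t)$ for $\tau \leq t$ --- the ``contraction'' direction --- but a priori says nothing about the growth direction. The way out is to exploit the fact that the interval $[0,T]$ is closed and $K(T)$ is itself compact: applying the left absolute continuity inequality with $\tau$ arbitrary and $t=T$, and with $R$ large enough to contain $K(T)$ together with a neighbourhood, shows that points of $K(\tau)$ cannot escape too far from $K(T)$ on pain of violating integrability of $m_{x,R}^K$ along $[\tau,T]$. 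Making this rigorous is the technical heart of the argument; once done, everything else reduces to a single application of the measurable projection and the one-sided containment estimate supplied by left absolute continuity of $K$.
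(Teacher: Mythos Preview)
Your approach --- construct a Borel closest-point selection $d_t:K(\tau)\to K(t)$, push forward, and bound $W_p(\nu_\tau,d_{t\,\sharp}\nu_\tau)$ by the pointwise distance $\dist_{\R^d}(x;K(t))$ --- is exactly the paper's. The paper is terser: it simply asserts the existence of a single modulus $m_K\in L^1([0,T],\R_+)$, depending only on $K$, such that every $x_\tau\in K(\tau)$ admits $x_t\in K(t)$ with $|x_\tau-x_t|\leq\int_\tau^t m_K$, and proceeds. You correctly recognise that this assertion hides the real work, and you isolate it as the main obstacle.

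Your resolution of that obstacle, however, does not go through. The left absolute continuity bound $\Delta_{x,R}(K(\tau);K(T))\leq\int_\tau^T m^K_{x,R}$ only constrains points of $K(\tau)$ that \emph{already lie} in $\B_{\R^d}(x,R)$; it says nothing about points of $K(\tau)$ outside that ball. So fixing $x,R$ in advance cannot rule out faraway points of $K(\tau)$, and the sentence ``points of $K(\tau)$ cannot escape too far from $K(T)$'' does not follow. In fact the claim is false in general: for $K(0)=\{0\}$ and $K(t)=\{0,1/t\}$ on $(0,1]$, one checks that $K$ is left absolutely continuous with compact images (take $m^K_{x,R}(s)=s^{-2}\,\mathbb{1}_{\{s\geq 1/(|x|+R)\}}$), yet $\bigcup_s K(s)$ is unbounded; moreover, with $\mu=\delta_0$ and the test measures $\nu_\tau=(1-(R\tau)^p)\delta_0+(R\tau)^p\delta_{1/\tau}\in\Qpazo_K(\tau)\cap\B_{\Pcal_p}(\delta_0,R)$ one computes $\dist_{\Pcal_p(\R^d)}(\nu_\tau;\Qpazo_K(t))=R(t-\tau)/t$, which cannot be dominated by $\int_\tau^t m$ for any $m\in L^1$ (let $t=2\tau\to 0$). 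So the paper's assertion of a single $m_K$ also rests on an unstated hypothesis --- e.g.\ uniform boundedness of the images of $K$ --- under which your argument and the paper's both go through immediately with $m_{\mu,R}:=m^K_{0,R_*}$.
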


\begin{proof}
Fix some $\tau \in [0,T]$ along with $\mu \in \Pcal_p(\R^d)$ and $R>0$ for which $\Qpazo_K(\tau) \cap \B_{\Pcal_p(\R^d)}(\mu,R) \neq \emptyset$, and let $\mu_{\tau} \in \Qpazo_K(\tau) \cap \B_{\Pcal_p(\R^d)}(\mu,R)$. Since $K : [0,T] \tto \R^d$ is left absolutely continuous, there exists for each $x_{\tau} \in \supp(\mu_{\tau})$ and every $t \in [0,T]$ such that $\tau \leq t$ some point $x_t \in K(t)$ for which
\begin{equation*}
|x_t - x_{\tau}| \leq \INTSeg{m_K(s)}{s}{\tau}{t}
\end{equation*}
where $m_K(\cdot) \in L^1([0,T],\R_+)$ only depends on $K : [0,T] \tto \R^d$. Whence, up to a trivial extension argument outside $\supp(\mu_{\tau})$, it is possible to construct a Borel map $\phi_{(\tau,t)} : K(\tau) \to K(t)$ which satisfies
\begin{equation*}
|\phi_{(\tau,t)}(x_{\tau}) - x_{\tau}| \leq \INTSeg{m_K(s)}{s}{\tau}{t}
\end{equation*}
for each $x_{\tau} \in \supp(\mu_{\tau})$. Then, one can easily show that the measure $\mu_t := \phi_{(\tau,t) \, \sharp} \mu_{\tau}$ satisfies $\supp(\mu_t) \subset K(t)$, and by \eqref{eq:WassEst} the latter also complies with the estimate
\begin{equation*}
W_p(\mu_{\tau},\mu_t) \leq \INTSeg{m_K(s)}{s}{\tau}{t}. 
\end{equation*}
Repeating the arguments supporting the regularity statement of Theorem \ref{thm:NecessaryConds} then closes the proof. 
\end{proof}

\begin{prop}[Computation of adjacent directions to $\Graph(\Qpazo_K)$]
For all times $t \in [0,T]$ and each $\mu \in \Qpazo_K(t)$, it holds that 
\begin{equation*}
\begin{aligned}
& \bigg\{ (\zeta,\xi) \in \R \times \Lpazo^p(\R^d,\R^d;\mu) ~\, \textnormal{s.t.}~ (\zeta,\xi(x)) \in T^{\flat}_{\Graph(K)}(t,x) ~ \textnormal{for $\mu$-almost every $x \in K(t)$} \bigg\} \\
& \hspace{13.25cm} \subset T^{\flat}_{\Graph(\Qpazo_K)}(t,\mu). 
\end{aligned}
\end{equation*}
\end{prop}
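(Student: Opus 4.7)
The plan is to adapt the stationary argument of Proposition \ref{prop:AdjacentCone} to the time-dependent setting by working within the graph $\Graph(K) \subset [0,T] \times \R^d$. Given $(\zeta,\xi)$ satisfying the hypothesis, the aim is to construct, for each small $h > 0$, a time $s_h \in [0,T]$ with $|s_h - (t + h\zeta)| = o(h)$ together with a Borel map $d_h : K(t) \to K(s_h)$ such that the pushforward $\mu_h := d_{h\,\sharp}\mu$ lies in $\Qpazo_K(s_h)$ and
$$W_p \Big( (\Id + h\xi)_{\sharp}\mu , \mu_h \Big) \leq \NormLp{d_h - \Id - h\xi}{p}{\R^d,\R^d;\mu} = o(h).$$
These two facts combined then yield $(\zeta,\xi) \in T^{\flat}_{\Graph(\Qpazo_K)}(t,\mu)$ through the characterisation \eqref{eq:ContingentGraphCharac}.

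The first step is to apply the measurable selection technique used in Proposition \ref{prop:AdjacentCone} to the set-valued map
$$\Dpazo_h : x \in K(t) \tto \underset{(s,y) \in \Graph(K)}{\textnormal{argmin}} \Big( |s - (t + h\zeta)| + |y - (x + h\xi(x))| \Big),$$
which has nonempty compact images (by properness of $\Graph(K)$ intersected with a large ball) and is $\mu$-measurable. Theorem \ref{thm:MeasurableSel} then produces a Borel selection $(\sigma_h,y_h) : K(t) \to \Graph(K)$, and the hypothesis $(\zeta,\xi(x)) \in T^{\flat}_{\Graph(K)}(t,x)$ gives the pointwise estimate $|\sigma_h(x) - (t + h\zeta)| + |y_h(x) - (x + h\xi(x))| = o_x(h)$ for $\mu$-almost every $x \in K(t)$.

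The main obstacle is that the times $\sigma_h(x)$ depend on $x$, whereas $\mu_h$ must be supported in a single slice $K(s_h)$. To unify the time component, I would invoke Egoroff's theorem to extract, for each $\epsilon > 0$, a Borel set $A_\epsilon \subset K(t)$ with $\mu(K(t) \setminus A_\epsilon) < \epsilon$ on which the above $o_x(h)/h$-decay is uniform, and then set $s_h := (t + h\zeta) + \sup_{x \in A_\epsilon}(\sigma_h(x) - (t + h\zeta))_+$. Uniform convergence yields $|s_h - (t + h\zeta)| = o(h)$ with $\sigma_h(x) \leq s_h$ for every $x \in A_\epsilon$, so that the left absolute continuity of $K$ combined with the Lebesgue point property of $m_K$ at $t$ permits a second measurable selection producing $\tilde{y}_h(x) \in K(s_h)$ with $|\tilde{y}_h(x) - y_h(x)| \leq \INTSeg{m_K(r)}{r}{\sigma_h(x)}{s_h} = o(h)$ uniformly on $A_\epsilon$. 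Completing $\tilde{y}_h$ to a Borel map on $K(t)$ via any measurable projection onto $K(s_h)$ off $A_\epsilon$ and setting $d_h := \tilde{y}_h$, the $\Lpazo^p$-error splits into a uniformly $o(h)^p$ contribution on $A_\epsilon$ and a remainder over $K(t) \setminus A_\epsilon$ controlled through the $p$-integrability of $(1 + |\xi|)^p$ against $\mu$. A diagonal argument letting $\epsilon = \epsilon(h) \to 0$ slowly enough then delivers the desired $o(h)$ rate.

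The hardest part is precisely this coupling between $\epsilon$ and $h$: the pointwise decay of $\sigma_h(x)$ is not uniform, and transporting the points $y_h(x)$ across nearby slices of $K$ without destroying the $o(h)$-rate requires the combined use of Egoroff's theorem, the one-sided regularity of $K$, and the Lebesgue point property of $m_K$. For times $t$ outside the full-measure Lebesgue point set of $m_K$, the conclusion can still be recovered by approximation from nearby Lebesgue-point times in $[0,T]$.
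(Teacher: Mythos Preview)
Your route works in principle but is far more involved than the paper's. The key simplification you miss is that one can project $x + h\xi(x)$ directly onto the \emph{fixed} slice $K(t+h\zeta)$ rather than onto $\Graph(K)$: the paper defines
\[
\Dpazo_{K(t)}(x) := \underset{y \in K(t+h\zeta)}{\textnormal{argmin}}\, |x + h\xi(x) - y|,
\]
takes a Borel selection $d_{K(t)} : K(t) \to K(t+h\zeta)$, and since the target set is the same for every $x$, the pushforward $d_{K(t)\,\sharp}\mu$ automatically lies in $\Qpazo_K(t+h\zeta)$ with no time-unification whatsoever. The pointwise estimate $|x+h\xi(x)-d_{K(t)}(x)| = o_x(h)$ with dominator $h|\xi(x)|$ then feeds directly into Lebesgue's dominated convergence exactly as in Proposition~\ref{prop:AdjacentCone}. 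Your Egoroff argument, diagonal extraction, and Lebesgue-point restriction on $t$ are all artifacts of having first projected onto $\Graph(K)$, which introduces the $x$-dependent times $\sigma_h(x)$ you then labour to undo.

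The implicit step in the paper---passing from $(\zeta,\xi(x)) \in T^{\flat}_{\Graph(K)}(t,x)$ to $\dist(x+h\xi(x), K(t+h\zeta)) = o_x(h)$---does quietly use the regularity of $K$ to absorb the small mismatch between the nearest graph-point and the exact slice at $t+h\zeta$. This is precisely the ingredient you invoke when transporting $y_h(x)$ across slices, but the paper applies it once at the pointwise level rather than threading it through an Egoroff--diagonal scheme. Your concern about the $x$-dependence of the time component is therefore legitimate in spirit, but resolved much more economically by choosing the right projection target from the outset.
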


\begin{proof}
Given some $t \in [0,T]$, an element $\mu \in \Qpazo(t)$ and some $(\zeta,\xi) \in \R \times \Lpazo^p(\R^d,\R^d;\mu)$ such that $(\zeta,\xi(x)) \in T^{\flat}_{\Graph(K)}(t,x)$ for $\mu$-almost every $x \in \R^d$, one can show by repeating the arguments in the proof of Proposition \ref{prop:AdjacentCone} that the set-valued map 
\begin{equation*}
\Dpazo_{K(t)} : K(t) \tto \underset{y \in K(t+h \zeta)}{\textnormal{argmin}} |x + h \xi(x) - y| \subset K(t+h \zeta)
\end{equation*}
admits a Borel selection $x \in K(t) \mapsto d_{K(t)}(x) \in K(t+ h \zeta)$ which satisfies 
\begin{equation*}
|x + h \xi(x) - d_{K(t)}(x)| = o_x(h)  
\end{equation*}
for $\mu$-almost every $x \in \R^d$ as $h \to 0^+$, where $|o_x(h)| \leq h |\xi(x)|$. Then, there simply remains to note that $d_{K(t) \, \sharp} \mu \in \Qpazo_K(t+h)$ by construction, while 
\begin{equation*}
\begin{aligned}
\dist_{\Pcal_p(\R^d)} \Big( (\Id + h\xi)_{\sharp} \mu \, ; \Qpazo_K(t+h) \Big) & \leq W_p \Big( (\Id + h \xi)_ {\sharp} \mu , d_{K(t) \, \sharp} \mu  \Big)  \\
& \leq \; \NormLp{\Id + h \xi - d_{K(t)}}{p}{\R^d,\R^d; \,\mu} ~=~ o(h)
\end{aligned}
\end{equation*}
by Lebesgue's dominated convergence theorem, which concludes the proof. 
\end{proof}


\paragraph*{Constraints sets defined as lifted epigraphs.} 

In this second example, which is discussed in our earlier work \cite{ViabCDC}, we consider an extended real-valued map $\Wpazo : \Pcal_p(\R^d) \to \R \cup \{+\infty\}$ with compact sublevels that is continuous over its domain $\dom(\Wpazo) \subset \Pcal_p(\R^d)$, and define its \textit{lifted epigraph} by 
\begin{equation*}
\Qpazo_{\Wpazo} := \bigg\{ \Bmu \in \Pcal_p(\R^{d+1}) ~\, \text{s.t.}~ \Bmu = \mu \times \delta_{\alpha} ~\text{with}~ \mu \in \Pcal_p(\R^d)~\text{and}~ \Wpazo(\mu) \leq \alpha \bigg\}. 
\end{equation*}
In what follows, we discuss the topological properties of this constraint set and provide a full characterisation of a relevant subset of its contingent cone. 

\begin{prop}[Topological properties of $\Qpazo_{\Wpazo}$]
Under the assumptions listed hereinabove on the function $\Wpazo : \Pcal_p(\R^d) \to \R_+ \cup \{+\infty\}$, the set $\Qpazo_{\Wpazo} \subset \Pcal_p(\R^d)$ is proper. 
\end{prop}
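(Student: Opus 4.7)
The plan is to establish properness by verifying two things separately: that $\Qpazo_{\Wpazo}$ is closed in $\Pcal_p(\R^{d+1})$, and that for any $\Bmu^{\star} \in \Pcal_p(\R^{d+1})$ and $R > 0$, the intersection $\Qpazo_{\Wpazo} \cap \B_{\Pcal_p(\R^{d+1})}(\Bmu^{\star},R)$ is sequentially relatively compact in the $W_p$-topology. The two assumptions on $\Wpazo$ will enter at different moments: continuity on its domain will be used for closedness, whereas compactness of sublevel sets will be the key ingredient for relative compactness.

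For closedness, I would pick a sequence $\Bmu_n = \mu_n \times \delta_{\alpha_n} \in \Qpazo_{\Wpazo}$ converging in $W_p(\R^{d+1})$ to some $\Bmu$. Since Wasserstein convergence implies convergence of marginals, one extracts $\mu_n \to \mu$ in $W_p(\R^d)$ and $\delta_{\alpha_n} \to \nu$ in $W_p(\R)$. The $p$-moment bound forces $|\alpha_n|$ to be uniformly bounded, so up to a subsequence $\alpha_n \to \alpha$ and $\nu = \delta_\alpha$; testing $\Bmu_n$ against bounded continuous functions of $(x,s)$ then shows $\Bmu = \mu \times \delta_\alpha$. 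Finally, since the $\alpha_n$ are bounded by some constant $M$ and $\Wpazo(\mu_n) \leq \alpha_n$, the sequence $(\mu_n)$ lies in the compact sublevel $\{\Wpazo \leq M\} \subset \dom(\Wpazo)$; hence $\mu \in \dom(\Wpazo)$, and the continuity of $\Wpazo$ on its domain combined with $\Wpazo(\mu_n) \leq \alpha_n$ allows one to pass to the limit and conclude $\Wpazo(\mu) \leq \alpha$, i.e. $\Bmu \in \Qpazo_{\Wpazo}$.

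For relative compactness of $\Qpazo_{\Wpazo} \cap \B_{\Pcal_p(\R^{d+1})}(\Bmu^{\star},R)$, I would start from a sequence $\Bmu_n = \mu_n \times \delta_{\alpha_n}$ therein. The $W_p$-ball condition gives $\Mpazo_p(\Bmu_n) \leq R + \Mpazo_p(\Bmu^{\star})$, from which $\alpha_n^p \leq \Mpazo_p(\Bmu_n)^p \leq C^p$ for some uniform $C$. Consequently $\alpha_n$ is bounded and, up to extraction, $\alpha_n \to \alpha \geq 0$. The crucial step is then to use $\Wpazo(\mu_n) \leq \alpha_n \leq C$ together with the compactness of the sublevel $\{\mu \in \Pcal_p(\R^d) \, : \, \Wpazo(\mu) \leq C\}$ to extract a further subsequence converging to some $\mu \in \Pcal_p(\R^d)$ in $W_p$. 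To upgrade this into convergence of $\Bmu_n$ in $W_p(\R^{d+1})$, I would use the product transport plan built from an optimal coupling between $\mu_n$ and $\mu$ concatenated with the transition from $\alpha_n$ to $\alpha$, yielding the estimate
\begin{equation*}
W_p^p(\mu_n \times \delta_{\alpha_n} , \mu \times \delta_\alpha) \leq C_p \Big( W_p^p(\mu_n,\mu) + |\alpha_n - \alpha|^p \Big),
\end{equation*}
which clearly goes to zero. Combining with the closedness established previously shows that the limit belongs to $\Qpazo_{\Wpazo}$, whence the desired compactness.

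The proof is mostly routine once the right structural observations are made, and no step stands out as a genuine obstacle. The only point worth caution is ensuring that the $W_p$-limit of product measures of the form $\mu_n \times \delta_{\alpha_n}$ retains this product structure, which however follows painlessly once the sequence $(\alpha_n)$ is shown to be bounded via the moment estimate.
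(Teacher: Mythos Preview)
Your proof is correct and follows essentially the same strategy as the paper's: bound $|\alpha|$ uniformly for $\Bmu = \mu \times \delta_\alpha$ in a Wasserstein ball, then invoke the compactness of the sublevels of $\Wpazo$ to control the $\mu$-component. The paper obtains the bound on $|\alpha|$ via a direct lower estimate $W_p(\Bmu,\Bnu) \geq |\alpha| - \int |\beta|\,\dn\Bnu$ using Jensen's inequality, whereas you go through the moment inequality $|\alpha_n| \leq \Mpazo_p(\Bmu_n) \leq R + \Mpazo_p(\Bmu^\star)$; these are equivalent routes to the same estimate. Your write-up is in fact more detailed than the paper's, which dismisses closedness as routine and leaves the extraction argument implicit.
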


\begin{proof}
Showing that $\Qpazo_{\Wpazo}$ is closed under our assumptions is a matter of routine computations. Given $\Bmu = \mu \times \delta_{\alpha} \in \Qpazo_{\Wpazo}$ and some $\Bnu \in \Pcal_p(\R^{d+1})$, one has that 
\begin{equation*}
\begin{aligned}
W_p(\Bmu,\Bnu) & = \bigg( \INTDom{|(x,\alpha) - (y,\beta)|^p}{\R^{2(d+1)}}{\Bgamma(x,\alpha,y,\beta)} \bigg)^{1/p} \\
& \geq \bigg( \INTDom{|\alpha - \beta|^p}{\R^{2(d+1)}}{\Bgamma(x,\alpha,y,\beta)} \bigg)^{1/p} \\
& \geq |\alpha| - \mathsmaller{\INTDom{|\beta|}{\R^{d+1}}{\Bnu(y,\beta)}} 
\end{aligned}
\end{equation*}
for all $\Bgamma \in \Gamma_o(\Bmu,\Bnu)$, where we used Jensen's inequality. In particular given $R > 0$, it then holds for every $\Bmu \in \Qpazo_{\Wpazo} \cap \B_{\Pcal_p(\R^{d+1})}(\Bnu,R)$ that 
\begin{equation*}
\Wpazo(\mu) \leq \alpha \leq R + \mathsmaller{\INTDom{|\beta|}{\R^{d+1}}{\Bnu(y,\beta)}}. 
\end{equation*}
Since $\Wpazo : \Pcal_p(\R^d) \to \R \cup \{+\infty\}$ has compact sublevels, we deduce that $\Qpazo_{\Wpazo} \cap \B_{\Pcal_p(\R^{d+1})}(\Bnu,R)$ is compact for any $\Bnu \in \Pcal_p(\R^{d+1})$ and $R >0$, and thus that $\Qpazo_{\Wpazo}$ is proper.
\end{proof}

Similarly to what is known in nonsmooth analysis, the contingent cone to $\Qpazo_{\Wpazo}$ can be expressed in terms of \textit{lower directional derivatives} of the functional. Following \cite{Badreddine2022Bis}, we define these latter as
\begin{equation}
\label{eq:LowerDerivativeDef}
\D_{\uparrow} \Wpazo(\mu)(\xi) := \liminf_{\substack{h \to 0^+ \hspace{-0.065cm} , \; \mu_{h} \in \dom(\Wpazo) \\ W_p((\Id + h \xi)_{\sharp}\mu,\mu_h) = o(h)}} \hspace{-0.35cm} \frac{\Wpazo(\mu_h) - \Wpazo(\mu)}{h}
\end{equation}
for each $\mu \in \dom(\Wpazo)$ and every $\xi \in T_{\dom(\Wpazo)}(\mu) \subset \Lpazo^p(\R^d,\R^d;\mu)$. 

\begin{prop}[Characterisation of contingent directions to $\Qpazo_{\Wpazo}$]
For every $\Bmu \in \Qpazo_{\Wpazo}$ and each $(\xi,\rho) \in \Lpazo^p(\R^d,\R^d;\mu) \times \R$, it holds that
\begin{equation*}
(\xi,\rho) \in T_{\Qpazo_{\Wpazo}}(\Bmu) \qquad \text{if and only if} \qquad \left\{
\begin{aligned}
& \xi \in T_{\dom(\Wpazo)}(\mu)  ~~\textnormal{and}~~ \D_{\uparrow} \Wpazo(\mu)(\xi) \leq \rho ~~ & \text{when $\alpha = \Wpazo(\mu)$}, \\
& \xi \in T_{\dom(\Wpazo)}(\mu)  ~~ & \text{when $\alpha > \Wpazo(\mu)$}.
\end{aligned}
\right. 
\end{equation*}
\end{prop}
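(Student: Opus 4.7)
The plan is to prove the two implications and, within each, to separate the cases $\alpha=\Wpazo(\mu)$ and $\alpha>\Wpazo(\mu)$. The crucial technical ingredient in both directions is that, since $\Bmu = \mu\times\delta_\alpha$, any coupling between two product-with-Dirac measures is a pure product in the last coordinate. Concretely, for every $\mu_1,\mu_2\in\Pcal_p(\R^d)$ and $a,b\in\R$, writing any $\Bgamma\in\Gamma(\mu_1\times\delta_a,\mu_2\times\delta_b)$ as $\gamma\otimes\delta_{(a,b)}$ with $\gamma\in\Gamma(\mu_1,\mu_2)$ yields the two-sided estimate
\[
\max \Big\{W_p(\mu_1,\mu_2)\, ,\, |a-b|\Big\} \leq W_p(\mu_1\times\delta_a,\mu_2\times\delta_b)\leq C_p\Big(W_p(\mu_1,\mu_2)+|a-b|\Big),
\]
which I will rely on systematically to pass between distances in $\Pcal_p(\R^{d+1})$ and those of the two factors.

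For the \emph{necessary} implication, suppose $(\xi,\rho)\in T_{\Qpazo_{\Wpazo}}(\Bmu)$. By the sequential characterisation of the contingent cone, there exist $h_i\to 0^+$ and $\Bnu_i=\mu_i\times\delta_{\alpha_i}\in\Qpazo_{\Wpazo}$ with $W_p\bigl((\Id+h_i\xi)_{\sharp}\mu\times\delta_{\alpha+h_i\rho},\Bnu_i\bigr)=o(h_i)$. The lower bound above immediately yields both $W_p((\Id+h_i\xi)_{\sharp}\mu,\mu_i)=o(h_i)$ and $|\alpha_i-\alpha-h_i\rho|=o(h_i)$. The first estimate, together with $\mu_i\in\dom(\Wpazo)$, already gives $\xi\in T_{\dom(\Wpazo)}(\mu)$. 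When $\alpha=\Wpazo(\mu)$, combining $\Wpazo(\mu_i)\leq\alpha_i$ with the second estimate produces $\frac{\Wpazo(\mu_i)-\Wpazo(\mu)}{h_i}\leq\rho+o(1)$; the pairs $(h_i,\mu_i)$ are admissible in the liminf defining $\D_{\uparrow}\Wpazo(\mu)(\xi)$, so $\D_{\uparrow}\Wpazo(\mu)(\xi)\leq\rho$.

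For the \emph{sufficient} implication, assume first that $\alpha>\Wpazo(\mu)$ and that $\xi\in T_{\dom(\Wpazo)}(\mu)$. Pick sequences $h_i\to 0^+$ and $\mu_i\in\dom(\Wpazo)$ with $W_p((\Id+h_i\xi)_{\sharp}\mu,\mu_i)=o(h_i)$; by continuity of $\Wpazo$ on its domain, $\Wpazo(\mu_i)\to\Wpazo(\mu)<\alpha$, so $\Wpazo(\mu_i)\leq\alpha+h_i\rho$ eventually and $\Bnu_i:=\mu_i\times\delta_{\alpha+h_i\rho}\in\Qpazo_{\Wpazo}$. Applying the upper bound of the product estimate with $a=b=\alpha+h_i\rho$ gives $W_p\bigl((\Id+h_i\xi)_{\sharp}\mu\times\delta_{\alpha+h_i\rho},\Bnu_i\bigr)\leq C_p W_p((\Id+h_i\xi)_{\sharp}\mu,\mu_i)=o(h_i)$, proving $(\xi,\rho)\in T_{\Qpazo_{\Wpazo}}(\Bmu)$. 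For the remaining case $\alpha=\Wpazo(\mu)$ with $\D_{\uparrow}\Wpazo(\mu)(\xi)\leq\rho$, unfolding the liminf in \eqref{eq:LowerDerivativeDef} provides sequences $h_i\to 0^+$ and $\mu_i\in\dom(\Wpazo)$ with $W_p((\Id+h_i\xi)_{\sharp}\mu,\mu_i)=o(h_i)$ and $\Wpazo(\mu_i)\leq\Wpazo(\mu)+h_i\rho+o(h_i)=\alpha+h_i\rho+o(h_i)$. Setting $\alpha_i:=\max\{\Wpazo(\mu_i),\alpha+h_i\rho\}$ forces $\Bnu_i:=\mu_i\times\delta_{\alpha_i}\in\Qpazo_{\Wpazo}$ while $|\alpha_i-\alpha-h_i\rho|=o(h_i)$, and the upper product estimate again yields $W_p\bigl((\Id+h_i\xi)_{\sharp}\mu\times\delta_{\alpha+h_i\rho},\Bnu_i\bigr)=o(h_i)$, as required.

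The only delicate point to verify carefully is the liminf/liminf interchange when invoking the definition of $\D_{\uparrow}\Wpazo(\mu)(\xi)$: in the necessary direction one must check that the admissibility constraints in \eqref{eq:LowerDerivativeDef} are met along the extracted subsequence (which they are, by the projection estimates), and in the sufficient direction one must check that the inequality $\D_{\uparrow}\Wpazo(\mu)(\xi)\leq\rho$ really produces a single sequence realising $\limsup_i\frac{\Wpazo(\mu_i)-\Wpazo(\mu)}{h_i}\leq\rho$; this is immediate from the very definition of liminf as the infimum over admissible sequences. Everything else reduces to the product-Dirac distance identity stated at the outset and to the definitions of $T_{\dom(\Wpazo)}(\mu)$ and $T_{\Qpazo_{\Wpazo}}(\Bmu)$.
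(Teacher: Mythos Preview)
Your proof is correct and follows essentially the same strategy as the paper's: both exploit the product-with-Dirac structure of $\Bmu$ to split the $W_p$-distance in $\Pcal_p(\R^{d+1})$ into its $\Pcal_p(\R^d)$ and scalar components, and both unfold the liminf defining $\D_{\uparrow}\Wpazo(\mu)(\xi)$ in exactly the same way. The only differences are cosmetic: you treat the case $\alpha>\Wpazo(\mu)$ explicitly (using continuity of $\Wpazo$ on its domain), whereas the paper dismisses it as ``similar''; and in the sufficient direction for $\alpha=\Wpazo(\mu)$ you set $\alpha_i:=\max\{\Wpazo(\mu_i),\alpha+h_i\rho\}$, while the paper equivalently writes $\alpha_i=\Wpazo(\mu)+h_i\rho_i$ for some $\rho_i\to\rho$.
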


\begin{proof}
The ensuing arguments are largely inspired by those of \cite[Proposition 6.1.4]{Aubin1990}. We only treat the case $\alpha = \Wpazo(\mu)$, the other being similar. Assume first that $(\xi,\rho) \in T_{\Qpazo_{\Wpazo}}(\Bmu)$ so that one may find a sequence $h_i \to 0^+$ and some $\Bmu_{h_i} := \mu_{h_i} \times \delta_{\alpha_{h_i}} \in \Qpazo_{\Wpazo}$ which satisfy
\begin{equation}
\label{eq:ContingentSmallo}
W_p \Big( (\Id + h_i(\xi,\rho))_{\sharp} \Bmu , \Bmu_{h_i} \Big) \leq o(h_i).
\end{equation}
Note also that, as a consequence of the disintegration theorem (see e.g. \cite[Theorem 5.2.1]{AGS}) and up to a permutation of coordinates, each optimal plan $\Bgamma_{h_i} \in \Gamma_o((\Id + h_i(\xi,\rho))_{\sharp} \Bmu , \Bmu_{h_i})$ is of the form 
\begin{equation*}
\Bgamma_{h_i} = \gamma_{h_i} \times \delta_{(\Wpazo(\mu) + h_i \rho,\alpha_{h_i})}
\end{equation*}
for some $\gamma_{h_i} \in \Gamma_o((\Id + h_i\xi)_{\sharp} \mu , \mu_{h_i})$. Thence, it necessarily holds that
\begin{equation*}
\begin{aligned}
W_p \Big( (\Id + h_i\xi)_{\sharp} \mu , \mu_{h_i} \Big) & = \bigg( \INTDom{|x-y|^p}{\R^{2d}}{\gamma_{h_i}(x,y)} \bigg)^{1/p} \\
& \leq \bigg( \INTDom{|(x,\alpha)-(y,\beta)|^p}{\R^{2(d+1)}}{\Bgamma_{h_i}(x,\alpha,y,\beta)} \bigg)^{1/p} \hspace{-0.15cm} \\
& = W_p \Big( (\Id + h_i(\xi,\rho))_{\sharp} \Bmu , \Bmu_{h_i} \Big).
\end{aligned}
\end{equation*}
Owing to \eqref{eq:ContingentSmallo}, this implies in particular that $\xi \in T_{\dom(\Wpazo)}(\mu)$. Similarly, one can show that 
\begin{equation*}
|\alpha_{h_i} - \Wpazo(\mu) - h_i \rho| \leq  W_p \Big( (\Id + h_i(\xi,\rho))_{\sharp} \Bmu , \Bmu_{h_i} \Big) = o(h_i)
\end{equation*}
which together with \eqref{eq:LowerDerivativeDef} and \eqref{eq:ContingentSmallo} finally yields
\begin{equation*}
\D_{\uparrow} \Wpazo(\mu)(\rho) \leq \liminf_{h_i \to 0^+} \frac{\Wpazo(\mu_{hi})-\Wpazo(\mu)}{h_i} \leq \rho. 
\end{equation*}
Conversely, let $(\xi,\rho) \in T_{\dom(\Wpazo)}(\mu) \times \R$ be such that $\D_{\uparrow} \Wpazo(\mu)(\xi) \leq \rho$, and observe then that there exist sequences $h_i \to 0^+$ and $(\mu_{h_i}) \subset \dom(\Wpazo)$ satisfying $W_p(\mu_{h_i},(\Id + h_i \xi)_{\sharp}\mu) = o(h_i)$, for which 
\begin{equation*}
\Wpazo(\mu_{h_i}) \leq \Wpazo(\mu) + h_i \rho + o(h_i)
\end{equation*}
when $h_i > 0$ is small enough. Hence, there exists $\rho_i \to \rho$ such that $\Bmu_{h_i} := \mu_{h_i} \times \delta_{\Wpazo(\mu) + h_i\rho_i} \in \Qpazo_{\Wpazo}$ and 
\begin{equation*}
W_p \Big( (\Id + h_i(\xi,\rho))_{\sharp} \big(\mu \times \delta_{\Wpazo(\mu)} \big) ,  \Bmu_{h_i} \Big) \leq W_p \big((\Id + h_i \xi)_{\sharp} \mu, \mu_{h_i} \big) + h_i(\rho-\rho_i) = o(h_i)
\end{equation*}
as $h_i \to 0^+$, which equivalently means that $(\xi,\rho) \in T_{\Qpazo_{\Wpazo}}(\Bmu)$. 
\end{proof}

\addcontentsline{toc}{section}{Appendices}
\section*{Appendices}


\setcounter{section}{0} 
\renewcommand{\thesection}{A} 
\renewcommand{\thesubsection}{A} 

\subsection{Proof of Proposition \ref{prop:SuperdiffWass}}
\label{section:AppendixSuperdiff}

\setcounter{Def}{0} \renewcommand{\thethm}{A.\arabic{Def}} 
\setcounter{equation}{0} \renewcommand{\theequation}{A.\arabic{equation}}

In this appendix, we detail the proof of Proposition \ref{prop:SuperdiffWass}. For the sake of self-containedness, we recall first the following technical result taken from \cite[Lemma 10.2.1]{AGS}. 

\begin{lem}[Quantitative superdifferentiability estimates on powers of the euclidean norm]
\label{lem:PnormEst}
Given $x,y \in \R^d$, one has for $p \in (1,2]$ that 
\begin{equation*}
\tfrac{1}{p} |y|^p - \tfrac{1}{p} |x|^p - \langle y-x , j_p(x) \rangle \leq \tfrac{2^{2-p}}{p-1} |x-y|^p, 
\end{equation*}
whereas for $p \in [2,+\infty)$, it holds that
\begin{equation*}
\tfrac{1}{p} |y|^p - \tfrac{1}{p} |x|^p - \langle y-x , j_p(x) \rangle \leq \tfrac{p-1}{2} |x-y|^2 \max\{ |x| , |y| \}^{p-2}.
\end{equation*}
Therein, $j_p : \R^d \to \R^d$ is the usual duality map defined by 
\begin{equation*}
j_p(x) := 
\left\{
\begin{aligned}
& 0 ~~ & \text{if $x=0$}, \\
& |x|^{p-2} x ~~  & \text{otherwise}.
\end{aligned}
\right.
\end{equation*}
\end{lem}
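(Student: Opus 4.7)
The plan is to reduce the two pointwise inequalities to a one-variable calculation along the segment joining $x$ to $y$, and then treat the two regimes of $p$ separately. Set $\gamma(t) := x + t(y-x)$ for $t \in [0,1]$ and define $\phi(t) := \tfrac{1}{p}|\gamma(t)|^p$, so that $\phi'(t) = \langle j_p(\gamma(t)), y-x \rangle$ at every $t$ for which $\gamma(t) \neq 0$, with the correct value by continuity elsewhere. The quantity to bound is precisely $\phi(1) - \phi(0) - \phi'(0)$, and by the fundamental theorem of calculus it can be written as
\begin{equation*}
\phi(1) - \phi(0) - \phi'(0) = \int_0^1 \langle j_p(\gamma(t)) - j_p(x), y-x \rangle \, \mathrm{d}t,
\end{equation*}
which will serve as the common starting point for both cases.

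For $p \in [2,+\infty)$, I would exploit the fact that $\phi$ is of class $C^2$ at every $t$ at which $\gamma(t) \neq 0$; the case in which the segment passes through the origin is handled by a routine approximation of $y$ with $y_n \to y$ whose corresponding segments avoid $0$, then passing to the limit by continuity of both sides. A direct computation yields
\begin{equation*}
\phi''(t) = |\gamma(t)|^{p-2} |y-x|^2 + (p-2) |\gamma(t)|^{p-4} \langle \gamma(t), y-x \rangle^2,
\end{equation*}
and since $p \geq 2$, Cauchy--Schwarz applied to the second term produces the pointwise estimate $\phi''(t) \leq (p-1) |\gamma(t)|^{p-2} |y-x|^2$, which in turn is dominated by $(p-1) \max\{|x|,|y|\}^{p-2} |y-x|^2$ by monotonicity of $r \mapsto r^{p-2}$. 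Inserting this into the Taylor formula with integral remainder $\phi(1) - \phi(0) - \phi'(0) = \int_0^1 (1-t)\phi''(t)\,\mathrm{d}t$ immediately yields the stated bound $\tfrac{p-1}{2}\max\{|x|,|y|\}^{p-2}|y-x|^2$.

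For $p \in (1,2]$, the pointwise bound on $\phi''$ fails near times at which $\gamma(t)$ vanishes, and I would instead invoke the Hölder continuity of the duality map, namely $|j_p(u) - j_p(v)| \leq 2^{2-p} |u-v|^{p-1}$ for every $u,v \in \R^d$. This estimate reduces to a one-dimensional computation in the plane spanned by $u$ and $v$, whose sharpness comes from maximizing $(a^{p-1} + b^{p-1})/(a+b)^{p-1}$ for $a,b \geq 0$, with maximum value $2^{2-p}$ attained at $a=b$. Inserting this bound into the integral representation, together with Cauchy--Schwarz and the identity $|\gamma(t) - x| = t|y-x|$, yields
\begin{equation*}
\phi(1) - \phi(0) - \phi'(0) \leq 2^{2-p} |y-x|^p \int_0^1 t^{p-1}\,\mathrm{d}t = \tfrac{2^{2-p}}{p}|y-x|^p \leq \tfrac{2^{2-p}}{p-1}|y-x|^p,
\end{equation*}
where the last step uses $p - 1 \leq p$.

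The main obstacle in this plan is the case $p \in (1,2]$: because the gradient of $\tfrac{1}{p}|\cdot|^p$ is merely Hölder continuous rather than locally Lipschitz, the clean Taylor-with-Hessian route of the smooth regime must be replaced by the more delicate Hölder estimate on $j_p$, and the sharp one-dimensional optimization underlying that estimate is the technical core of the argument.
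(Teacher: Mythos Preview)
The paper does not supply a proof of this lemma at all: it is simply quoted from \cite[Lemma~10.2.1]{AGS} as an auxiliary tool for Proposition~\ref{prop:SuperdiffWass}. So there is no ``paper's proof'' to compare against, and your proposal should be judged on its own merits.

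Your argument for $p \in [2,+\infty)$ is clean and complete. The computation of $\phi''(t)$ is correct, the Cauchy--Schwarz step gives $\phi''(t) \leq (p-1)|\gamma(t)|^{p-2}|y-x|^2$, the convexity of the segment gives $|\gamma(t)| \leq \max\{|x|,|y|\}$, and the integral remainder $\int_0^1 (1-t)\,\mathrm{d}t = \tfrac{1}{2}$ produces exactly the stated constant. The approximation to avoid $\gamma(t) = 0$ is routine.

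For $p \in (1,2]$ the strategy is sound and the final chain of inequalities is correct, but the justification of the key H\"older bound
\[
|j_p(u) - j_p(v)| \leq 2^{2-p}|u-v|^{p-1}
\]
is thinner than you suggest. Rotational equivariance of $j_p$ reduces the question to the plane spanned by $u$ and $v$, i.e.\ to $\R^2$, not to a one-dimensional problem; the maximisation of $(a^{p-1}+b^{p-1})/(a+b)^{p-1}$ that you describe only covers the antipodal configuration $u = a e$, $v = -b e$ and identifies where the supremum is attained, but does not by itself establish the bound for general angles between $u$ and $v$. The inequality with constant $2^{2-p}$ is nonetheless true and standard in the $p$-Laplacian literature, and in any case your proof has slack: you obtain $\tfrac{2^{2-p}}{p}|x-y|^p$ and then discard a factor $p/(p-1)$ to reach the stated bound, so even a somewhat larger H\"older constant would suffice. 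With that caveat, the proposal is correct.
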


\begin{proof}[Proof of Proposition \ref{prop:SuperdiffWass}]
Given an element $\gamma \in \Gamma_o(\mu,\nu)$ and some $h \in \R$, define the transport plan 
\begin{equation*}
\gamma_h := \Big( (\Id + h \zeta) \circ \pi^1 , (\Id + h \xi) \circ \pi^2 \Big)_{\raisebox{4pt}{$\scriptstyle{\sharp}$}} \gamma \in \Gamma \Big( (\Id + h \zeta)_{\sharp} \mu , (\Id + h \xi)_{\sharp} \nu \Big), 
\end{equation*}
and note that by construction, one has that
\begin{equation}
\label{eq:SuperdiffIneq0}
\begin{aligned}
\tfrac{1}{p} W_p^p \Big( (\Id + h \zeta)_{\sharp} \mu, (\Id + h \xi)_{\sharp}\nu \Big) - \tfrac{1}{p} W_p^p(\mu,\nu) & \leq \INTDom{\tfrac{1}{p}|x_h-y_h|^p}{\R^{2d}}{\gamma_h(x_h,y_h)} - \INTDom{\tfrac{1}{p} |x-y|^p}{\R^{2d}}{\gamma(x,y)} \\
& \leq \INTDom{\Big( \tfrac{1}{p} |x-y + h (\zeta(x) - \xi(y))|^p - \tfrac{1}{p} |x-y|^p \Big)}{\R^{2d}}{\gamma(x,y)}. 
\end{aligned}
\end{equation}
By leveraging the identities of Lemma \ref{lem:PnormEst} above, it can be checked that for $p \in (1,2]$, one has that
\begin{equation}
\label{eq:SuperdiffIneq1}
\begin{aligned}
\INTDom{\Big( \tfrac{1}{p} |x-y + h (\zeta(x) - \xi(y)) |^p - \tfrac{1}{p} |x-y|^p & - h \,  \big\langle \zeta(x) - \xi(y) , j_p(x-y) \big\rangle \Big)}{\R^{2d}}{\gamma(x,y)} \\
& \leq  \tfrac{2^{2-p}}{p-1} \INTDom{|h(\zeta(x) - \xi(y))|^p}{\R^{2d}}{\gamma(x,y)}  \\
& \leq  \tfrac{2}{p-1} |h|^p \Big( \hspace{-0.1cm} \NormLp{\zeta}{p}{\R^d,\R^d;\, \mu}^p + \NormLp{\xi}{p}{\R^d,\R^d; \, \nu}^p \hspace{-0.1cm} \Big),
\end{aligned}
\end{equation}
whereas for $p \in [2,+\infty)$, it holds 
\begin{equation}
\label{eq:SuperdiffIneq2}
\begin{aligned}
& \INTDom{\Big( \tfrac{1}{p} |x-y + h (\zeta(x) - \xi(y)) |^p - \tfrac{1}{p} |x-y|^p - h \,  \big\langle \zeta(x) - \xi(y) , j_p(x-y) \big\rangle \Big)}{\R^{2d}}{\gamma(x,y)}  \\
& \leq \tfrac{p-1}{2} \INTDom{|h(\zeta(x) - \xi(y))|^2 \max \Big\{ |x-y| , |x-y + h(\zeta(x) - \xi(y))| \Big\}^{p-2}}{\R^{2d}}{\gamma(x,y)} \\
& \leq \tfrac{(p-1)}{2} |h|^2 \INTDom{|\zeta(x) - \xi(y)|^2 \Big( |x-y| + |x-y + h(\zeta(x) - \xi(y))| \Big)^{p-2}}{\R^{2d}}{\gamma(x,y)} \\
& \leq (p-1) |h|^2 \bigg( W_p(\mu,\nu) + |h| \Big( \hspace{-0.1cm} \NormLp{\zeta}{p}{\R^d,\R^d; \,\mu} + \NormLp{\xi}{p}{\R^d,\R^d; \,\nu} \hspace{-0.1cm} \Big) \bigg)^{p-2} \Big( \hspace{-0.1cm} \NormLp{\zeta}{p}{\R^d,\R^d; \, \mu}^2 + \NormLp{\xi}{p}{\R^d,\R^d; \,\nu}^2 \hspace{-0.1cm} \Big),
\end{aligned}
\end{equation}
where we used elementary H\"older and convexity inequalities to derive both estimates. Thence, upon combining \eqref{eq:SuperdiffIneq1} and \eqref{eq:SuperdiffIneq2} with \eqref{eq:SuperdiffIneq0} depending on the value of $p \in (1,+\infty)$, one finally obtains that 
\begin{equation*}
\tfrac{1}{p} W_p^p \Big( (\Id + h\xi)_{\sharp} \mu, (\Id + h\zeta)_{\sharp}\nu \Big) - \tfrac{1}{p} W_p^p(\mu,\nu) \leq h \INTDom{\big\langle \zeta(x) - \xi(y), j_p(x-y) \big\rangle}{\R^{2d}}{\gamma(x,y)} + r_p(h,\xi,\zeta)
\end{equation*}
with $r_p(h,\zeta,\xi)$ being defined as in \eqref{eq:Remainder1} or \eqref{eq:Remainder2} depending on the value of $p \in (1,+\infty)$. 
\end{proof}


\setcounter{section}{0} 
\renewcommand{\thesection}{B} 
\renewcommand{\thesubsection}{B} 

\subsection{Proof of Proposition \ref{prop:AC}}
\label{section:AppendixAC}

\setcounter{Def}{0} \renewcommand{\thethm}{B.\arabic{Def}} 
\setcounter{equation}{0} \renewcommand{\theequation}{B.\arabic{equation}}

In this appendix section, we detail the proof of Proposition \ref{prop:AC}.

\begin{proof}[Proof of Proposition \ref{prop:AC}]
In what follows, we assume without loss of generality that $I := [0,T]$ for some $T>0$, and start by showing that when $\Qpazo : [0,T] \tto X$ is absolutley continuous, the map 
\begin{equation*}
t \in [0,T] \mapsto g(t) := \dist_X(\Kpazo(t) \, ; \Qpazo(t))
\end{equation*}
is absolutely continuous as well. To do so, we first need to establish some preliminary facts. Observe that since $\Kpazo(0)$ is compact, there exist $x_0 \in X$ and some $r_0 > 0$ such that
\begin{equation}
\label{eq:KpazoZero}
\Kpazo(0) \subset \B_X(x_0,r_0)
\end{equation}
for all times $t \in [0,T]$. In what follows, we show that the map $t \in [0,T] \mapsto \dist_X(x_0 \, ; \Qpazo(t))$ is continuous. Indeed, fixing $\tau \in [0,T]$, setting $R_{\tau} := \dist_X(x_0 \, ; \Qpazo(\tau))$ and recalling that $\Qpazo : [0,T] \tto X$ is absolutely continuous, for each $\epsilon > 0$ there exists some $\delta > 0$ such that
\begin{equation*}
\Qpazo(\tau) \cap \B_X(x_0,R_{\tau}+\epsilon) \subset \B_X \big( \Qpazo(t),\epsilon \big) \qquad \text{and} \qquad  \Qpazo(t) \cap \B_X(x_0,R_{\tau}+\epsilon) \subset \B_X \big( \Qpazo(\tau),\epsilon \big) 
\end{equation*}
whenever $|t-\tau| \leq \delta$. Noticing in turn that $\Qpazo(\tau) \cap \B_X(x_0,R_{\tau}+\epsilon) \neq \emptyset$ and $\Qpazo(t) \cap \B_X(x_0,R_{\tau}+\epsilon) \neq \emptyset$ by construction, the first of these inclusions implies that
\begin{equation*}
\begin{aligned}
\dist_X(x_0 \, ; \Qpazo(t)) & \leq \dist_X(x_0 \, ; \Qpazo(\tau)) + \dist_X \Big( \Qpazo(\tau) \cap \B_X(x_0,R_{\tau}+\epsilon) \, ; \Qpazo(t) \Big) \\
& \leq \dist_X(x_0 \, ; \Qpazo(\tau)) + \epsilon,  
\end{aligned}
\end{equation*}
while the second one analogously yields 
\begin{equation*}
\dist_X(x_0 \, ; \Qpazo(\tau)) \leq \dist_X(x_0 \, ; \Qpazo(t)) + \epsilon, 
\end{equation*}
from whence we can deduce that $t \mapsto \dist_X(x_0 \, ; \Qpazo(t))$ is continuous at $\tau \in [0,T]$. Recalling that $\Kpazo : [0,T] \tto X$ has compact images and that it satisfies
\begin{equation}
\label{eq:HausdorffEstimate}
\dsf_{\Hpazo}(\Kpazo(\tau) \, ; \Kpazo(t)) \leq \INTSeg{m_{\Kpazo}(s)}{s}{\tau}{t}
\end{equation}
for all times $0 \leq \tau \leq t \leq T$ and some $m_{\Kpazo}(\cdot) \in L^1([0,T],\R_+)$, it follows from \eqref{eq:KpazoZero} that $\Kpazo(t) \subset \B_X ( x_0, r_{\Kpazo})$ for all times $t \in [0,T]$ with $r_{\Kpazo} := r_0 \, + \Norm{m_{\Kpazo}(\cdot)}_1$. Notice then that 
\begin{equation*}
\dist_X(x\, ; \Qpazo(t)) \leq r_{\Kpazo} + \max_{t \in [0,T]} \dist_X(x_0 \, ; \Qpazo(t)) 
\end{equation*}
for all $t \in [0,T]$ and every $x \in \B_X(x_0,r_{\Kpazo})$, so that the quantity
\begin{equation}
\label{eq:RTDef}
R_T := (r_{\Kpazo}+1) + \sup \Big\{ \dist_X(x \, ; \Qpazo(t)) ~\, \textnormal{s.t.}~ (t,x) \in [0,T] \times \B_X(x_0,r_{\Kpazo}) \Big\}
\end{equation}
is well-defined and such that
\begin{equation*}
\dist_X(\Kpazo(t) \, ; \partial \B_X(x_0,R_T)) \geq \sup_{\tau,s \in [0,T]}\dist_X (\Kpazo(\tau) \, ; \Qpazo(s)) +1 
\end{equation*}
for all times $t \in [0,T]$.

We now prove that $g : [0,T] \to \R_+$ is absolutely continuous. For all times $\tau,t \in [0,T]$ satisfying $0 \leq \tau \leq t \leq T$, it holds that
\begin{equation}
\label{eq:gEst1}
\begin{aligned}
|g(t)-g(\tau)| & \leq \big| \dist_X(\Kpazo(t) \, ; \Qpazo(t)) - \dist_X(\Kpazo(\tau) \, ; \Qpazo(t)) \big| \\
& \hspace{0.45cm} + \big| \dist_X(\Kpazo(\tau) \, ; \Qpazo(t)) - \dist_X(\Kpazo(\tau) \, ; \Qpazo(\tau)) \big|.  
\end{aligned}
\end{equation}
In order to estimate the first term in \eqref{eq:gEst1}, note that for each $\epsilon > 0$, there exist $y_t^{\epsilon} \in \Qpazo(t)$ and $x_{\tau}^{\epsilon} \in \Kpazo(\tau)$ such that
\begin{equation*}
\dsf_X(x_{\tau}^{\epsilon},y_t^{\epsilon}) \leq \dist_X(\Kpazo(\tau) \, ; \Qpazo(t)) + \epsilon.
\end{equation*}
Furthermore, it stems from \eqref{eq:HausdorffEstimate} that there exists an element $x_t^{\epsilon} \in \Kpazo(t)$ for which 
\begin{equation*}
\dsf_X(x_{\tau}^{\epsilon},x_t^{\epsilon}) \leq \INTSeg{m_{\Kpazo}(s)}{s}{\tau}{t}. 
\end{equation*}
Merging both estimates, it then follows that 
\begin{equation*}
\begin{aligned}
\dist_X(\Kpazo(t) \, ; \Qpazo(t)) - \dist_X(\Kpazo(\tau) \, ; \Qpazo(t)) & \leq \dsf_X(x_t^{\epsilon},y_t^{\epsilon}) - \dsf_X(x_{\tau}^{\epsilon},y_t^{\epsilon}) + \epsilon \\
& \leq \INTSeg{m_{\Kpazo}(s)}{s}{\tau}{t} + \epsilon
\end{aligned}
\end{equation*}
and repeating the same argument while exchanging the roles of $\tau$ and $t$ further yields
\begin{equation}
\label{eq:gEst2}
\big| \dist_X(\Kpazo(t) \, ; \Qpazo(t)) - \dist_X(\Kpazo(\tau) \, ; \Qpazo(t)) \big| \leq \INTSeg{m_{\Kpazo}(s)}{s}{\tau}{t}
\end{equation}
since $\epsilon > 0$ was arbitrary. Concerning the second term in \eqref{eq:gEst1}, it stems from our choice of $R_T > 0$ in \eqref{eq:RTDef} that
\begin{equation*}
\dist_X(\Kpazo(\tau) \, ; \Qpazo(\tau)) = \dist_X \Big( \Kpazo(\tau) \, ; \Qpazo(\tau) \cap \B_X(x_0,R_T) \Big)
\end{equation*}
and
\begin{equation*}
\dist_X (\Kpazo(\tau) \, ; \Qpazo(t)) = \dist_X \Big( \Kpazo(\tau) \, ; \Qpazo(t) \cap \B_X(x_0,R_T) \Big)
\end{equation*}
Moreover, since $\Qpazo : [0,T] \tto X$ is absolutely continuous in the sense of Definition \ref{def:AC}, there exists a map $m_{x_0,R_T}(\cdot) \in L^1([0,T],\R_+)$ for which 
\begin{equation*}
\Qpazo(\tau) \cap \B(x_0,R_T) \subset \B_X \Big( \Qpazo(t) \hspace{0.015cm} , \mathsmaller{\INTSeg{m_{x_0,R_T}(s)}{s}{\tau}{t}} \Big) \quad \text{and} \quad \Qpazo(t) \cap \B(x_0,R_T) \subset \B_X \Big( \Qpazo(\tau) \hspace{0.015cm} , \mathsmaller{\INTSeg{m_{x_0,R_T}(s)}{s}{\tau}{t}} \Big).
\end{equation*}
Combining these few latter facts together, we further obtain
\begin{equation}
\label{eq:gEst3}
\big| \dist_X(\Kpazo(\tau) \, ; \Qpazo(t)) - \dist_X(\Kpazo(\tau) \, ; \Qpazo(\tau)) \big| \leq \INTSeg{m_{x_0,R_T}(s)}{s}{\tau}{t},
\end{equation}
which along with \eqref{eq:gEst1} and \eqref{eq:gEst2} finally yields that
\begin{equation*}
|g(t) - g(\tau)| \leq \INTSeg{\Big( m_{\Kpazo}(s) + m_{x_0,R_T}(s) \Big)}{s}{\tau}{t}, 
\end{equation*}
for all times $\tau,t \in [0,T]$ satisfying $0 \leq \tau \leq t \leq T$, which equivalently means that $g (\cdot) \in \AC([0,T],\R_+)$. 

We finally conclude by showing that whenever $\Qpazo : [0,T] \tto X$ is left absolutely continuous, then the set-valued map 
\begin{equation*}
\Ecal : t \in [0,T] \tto \Big\{ \alpha \in \R_+ ~\, \text{s.t.}~ \alpha = g(t) + r ~~ \text{for some $r \geq 0$} \Big\}
\end{equation*}
is left absolutely continuous as well. To do so, let $x_0 \in \Kpazo(0)$ and $R_T >0$ be as above, fix an element $\alpha_{\tau} \in \Ecal(\tau)$, and observe that 
\begin{equation*}
g(\tau) = \dist_X \Big( \Kpazo(\tau) \, ; \Qpazo(\tau) \cap \B_X(x_0,R_T) \Big) \leq \alpha_{\tau}
\end{equation*}
by construction. It then follows from elementary applications of the triangle inequality that
\begin{equation*}
\begin{aligned}
g(t) & = \dist_X(\Kpazo(t) \, ; \Qpazo(t)) \\
& \leq \dist_X(\Kpazo(t) \, ; \Kpazo(\tau)) + \dist_X \Big( \Kpazo(\tau) \, ; \Qpazo(\tau) \cap \B_X(x_0,R_T) \Big) + \dist_X \Big( \Qpazo(\tau) \cap \B_X(x_0,R_T) \, ; \Qpazo(t) \Big) \\
& \leq \alpha_{\tau} + \INTSeg{\Big( m_{\Kpazo}(s) + m_{x_0,R_T}(s) \Big)}{s}{\tau}{t}, 
\end{aligned}
\end{equation*}
for all times $t \in [0,T]$ such that $\tau \leq t$. In particular, we have shown that 
\begin{equation*}
\Delta_{\alpha,R}(\Ecal(\tau) \, ; \Ecal(t)) \leq \INTSeg{\Big( m_{\Kpazo}(s) + m_{x_0,R_T}(s) \Big)}{s}{\tau}{t}
\end{equation*}
for all times $0 \leq \tau \leq t \leq T$, every $\alpha \in \R_+$ and each $R>0$, which yields the desired claim 
\end{proof}


\bibliographystyle{plain}
{\footnotesize
\bibliography{../../ControlWassersteinBib}
}

\end{document}